\documentclass{article}

\usepackage{graphicx}
\usepackage[utf8]{inputenc}
\usepackage{amsfonts, amsmath, amssymb, amsthm}
\usepackage[letterpaper, portrait, margin=1in]{geometry}
\usepackage{verbatim}
\usepackage{enumitem}
\usepackage{xspace}
\usepackage{tikz}
\usepackage{wrapfig}

\newtheorem{thmain}{Theorem}

\newtheorem{theorem}{Theorem}
\newtheorem{proposition}[theorem]{Proposition}

\newtheorem{definition}[theorem]{Definition}
\newtheorem{corollary}[theorem]{Corollary}
\newtheorem{lemma}[theorem]{Lemma}
\newtheorem{remark}[theorem]{Remark}
\newtheorem{conjecture}{Conjecture}

\newcommand{\keyterm}[1]{\emph{#1}}

\newcommand{\makenest}[3]{\newcommand{#1}[1]{\ensuremath{\left#2##1\right#3}}}
\makenest{\abs}||
\makenest{\norm}\lVert\rVert
\makenest{\parens}()
\makenest{\bracket}[]
\makenest{\set}\{\}
\makenest{\chev}<>
\makenest{\clop}[)
\makenest{\opcl}(]
\makenest{\eval}.|

\newcommand{\deriv}[2]{\frac{d#1}{d#2}}
\newcommand{\pderiv}[2]{\frac{\partial#1}{\partial#2}}

\newcommand{\textand}{\qquad \text{and} \qquad}

\newcommand{\R}{\mathbb{R}}
\newcommand{\Z}{\mathbb{Z}}
\newcommand{\N}{\mathbb{N}}

\newcommand{\Oo}{\mathcal{O}}
\newcommand{\dd}{\,d}
\DeclareMathOperator{\id}{id}

\newcommand{\cspace}{\mathcal{M}}
\newcommand{\cmap}{F}

\newcommand{\poly}[1]{\mathcal{P}_{#1}} 
\newcommand{\coef}[3]{\bracket{#1}_{#2}^{#3}} 
\newcommand{\coefp}[2]{\bracket{#1}_{#2}} 
\newcommand{\hc}[2]{f_{#1}^{#2}} 
\newcommand{\lc}[2]{g_{#1}^{#2}} 

\title{On Length Spectrum Rigidity of Dispersing Billiard Systems}
\author{Otto Vaughn Osterman}
\date{August 25, 2022}

\begin{document}

\maketitle

\begin{abstract}
    We consider the class of dispersing billiard systems in the plane formed by removing three convex analytic scatterers satisfying the non-eclipse condition. The collision map in this system is conjugated to a subshift, providing a natural labeling of periodic points. We study the problem of marked length spectrum rigidity for this class of systems. We show that two such systems have the same marked length spectrum if and only if their collision maps are analytically conjugate to each other near a homoclinic orbit and that two scatterers and the marked length spectrum together uniquely determine the third scatterer.
\end{abstract}

\section{Introduction}

A dynamical billiard system is a dynamical system modeling the behavior of a point particle travelling in a region $D$, the \keyterm{billiard table}, undergoing elastic collisions with the boundary $ \partial D $. The collision map of a billiard system is the map from a position and angle of incidence at one collision point to the position and angle of incidence at the next collision point.

The \keyterm{perimeter} or \keyterm{length} of a periodic orbit is the total distance that a particle travels on the orbit in one period.
The \keyterm{length spectrum} of the system is the set of all positive real numbers that are perimeters of some periodic orbit. The problem of length spectrum rigidity is that of determining whether or not, in some class of billiard systems, the length spectrum uniquely defines the billiard table up to isometry.
In many classes of systems, there is a natural mapping from some set $\Sigma$ to the periodic points. In this case, the \keyterm{marked length spectrum} is the map that sends an element in $\Sigma$ to the perimeter of its respective periodic orbit.
The problem of marked length spectrum rigidity asks whether the marked length spectrum uniquely determines the billiard table.
Some results on this problem are already known for certain classes of billiard systems. For example:
\begin{itemize}
    \item In the class of billiard systems where the billiard table $D$ is a bounded convex domain with two axes of reflexive symmetry and a stable minor axis, the marked length spectrum uniquely determines $D$ up to isometry \cite{Verdiere}.
    \item In the class of billiard systems where $ D = \R^2 \setminus (D_1 \cup D_2 \cup \hdots \cup D_n) $ for convex analytic scatterers $ D_1,D_2,\hdots,D_n $ satisfying the non-eclipse condition such that the region $ D_1 \cup D_2 $ has two axes of symmetry, the marked length spectrum uniquely determines almost every system \cite{DKL}.
    \item If $D$ is a bounded strictly convex domain with an axis of symmetry that is sufficiently close to a circle in the $C^9$ topology, then any deformation of $D$ that preserves the length spectrum must be an isometric transformation \cite{DKW}.
\end{itemize}

Length spectrum rigidity problems in dynamical billiards have applications beyond billiard dynamics, most notably, to the well-known problem posed by Kac, ``Can you hear the shape of a drum?,'' which asks whether or not a domain in $\R^2$ is uniquely determined by its Laplace spectrum \cite{Kac}.
The Poisson relation for manifolds with boundary asserts that the wave trace of the Laplace operator is contained in the length spectrum. This result was originally proven for convex domains in \cite{AM}, but the arguments of \cite{MS} extend this to general domains (see \cite{PS}).
In fact, for generic strictly convex domains, the wave trace of the Laplace operator is equal to the length spectrum \cite{PS}. This implies that the Laplace spectrum uniquely determines the length spectrum, and thus that length spectrum rigidity implies generic Laplace spectrum rigidity.
For a more detailed discussion, see \cite{PS2}.

A dual problem to the inverse spectral problem suitable for unbounded domains is the inverse resonance problem, which asks whether or not a domain in $\R^2$ is uniquely determined up to isometry by its set of resonance poles, or singularities of the resolvent operator $ (\Delta - z^2)^{-1} $. For example, in \cite{Zelditch3}, Zelditch proves that generically, a domain that is the complement to two bounded, simply connected, analytic domains in $\R^2$ with a reflexive symmetry mapping one domain to the other is uniquely determined by its resonance poles. This proof is based on the methods in \cite{Zelditch1,Zelditch2} for calculating wave trace invariants around periodic orbits of a billiard.
A related result is proven in \cite{ISZ}, where the authors show that generically, given the spectral invariants of a semi-classical Schr\"{o}dinger operator, one can recover the full quantum Birkhoff normal form of its Hamiltonian, and in particular, the Birkhoff normal form of the billiard map.

In this paper, we consider the class of billiard systems arising from billiard tables of the form $ \R^2 \setminus (D_1 \cup D_2 \cup D_3) $ where the \keyterm{scatterers} $D_1$, $D_2$, and $D_3$ are pairwise disjoint, compact, convex, and have analytic boundaries with everywhere positive curvature. We refer to $D_1$, $D_2$, and $D_3$ as scatterers $1$, $2$, and $3$, respectively.
We also require that the system satisfies the \keyterm{non-eclipse condition}, i.e., there is no line in the plane that passes through all three scatterers.
Billiard systems where the billiard table is a complement of a union of open convex regions are called \keyterm{dispersing} and are known to have strong hyperbolic and chaotic properties. Thus, it is natural to expect an abundance of periodic orbits and for these periodic orbits to give a great amount of information on the system itself.

We parametrize $ \partial D_i $ by a closed curve $ \gamma_i: [0,\pi_i] \rightarrow \R^2 $ with arc length parameter and counter-clockwise orientation, where $\pi_i$ is the perimeter of scatterer $D_i$. We will denote a point in the phase space $\cspace$ of the collision map $\mathcal{\cmap}$ by $ (i,s,r) $, where:
\begin{itemize}
    \item The collision occurs on scatterer $i$ at the point $\gamma_i(s)$, and
    \item $ r = \sin\varphi $, where $\varphi$ is the signed angle of incidence of the collision.
\end{itemize}
We denote the connected components of the phase space by $ \cspace_i = \set{i} \times \R/\pi_i\Z \times [-1,1] $. We denote the restriction of the collision map to $ \cspace_i \cap \mathcal{\cmap}^{-1}(\cspace_j) $ by $\cmap_{ij}$. Often in this case we will write $ \cmap_{ij} : (s,r) \mapsto (s',r') $ instead of $ \cmap_{ij}: (i,s,r) \mapsto (j,s',r') $ since $i$ and $j$ are fixed by this restriction.
The collision map has a time-reversing involution $ I_0: (i,s,r) \mapsto (i,s,-r) $ with
\begin{equation} \label{involution}
    I_0 \circ \cmap = \cmap^{-1} \circ I_0.
\end{equation}

We assume without loss of generality that $\gamma_1(0)$ and $\gamma_2(0)$ are the points of shortest distance between $ \partial D_1 $ and $ \partial D_2 $. We can apply an isometric transformation to the system so that $ \gamma_1(0) = \parens{0, \frac{\ell_0}{2}} $ and $ \gamma_2(0) = \parens{0, -\frac{\ell_0}{2}} $ for some $ \ell_0 > 0 $, and we do so. By the non-eclipse condition, we may also assume that scatterer $3$ lies entirely to the right of the vertical axis.

\begin{definition}
    We define the \keyterm{coding sequence} of a point $ x \in \cspace $ to be the sequence $ \set{i_k}_{k \in \Z} $ such that $ \cmap^k(x) \in \cspace_{i_k} $.
    If $x$ is $n$-periodic, then its coding sequence is also $n$-periodic, so in this case, we sometimes say that the coding sequence of $x$ is the finite sequence $\set{i_k}_{k=0}^{n-1}$.
\end{definition}

We restrict our consideration to the non-escaping Cantor set
$$ \mathcal{C} := \bigcap_{k \in \Z} \cmap^{-k}(\cspace). $$
As proven in \cite{BDKL,DKL}, given the non-eclipse condition, the coding sequence is a bijective correspondence between $\mathcal{C}$ and the sequences $ \set{i_k}_{k \in \Z} \in \set{1,2,3}^\Z $ such that $ i_k \neq i_{k+1} $ for all $k$.
In particular, if we define
$$ \Sigma := \set{(i_0,i_1,i_2,\hdots,i_{n-1}): n \in \N, i_{k-1} \neq i_k \text{ for all } k, \text{ and } i_0 \neq i_{n-1}}, $$
then the periodic orbits are in bijective correspondence with $\Sigma$.
We therefore define the \keyterm{marked length spectrum} of such a billiard system to be the map $ \mathcal{L}: \sigma \mapsto L(\sigma) $, where $L(\sigma)$ is the perimeter of the periodic orbit with coding $\sigma$.
In this paper, we are concerned with the problem of marked length spectrum rigidity, i.e., whether or not the system itself is uniquely determined up to isometry by its marked length spectrum.

We primarily work with the cyclicity-$2$ periodic orbits with coding $ 3(12)^{m-1}13(12)^{n-1}1 $. We define $x_0^{m,n}$ to be the periodic point with this coding and $ \set{x_k^{m,n}}_{k=0}^{2m+2n-1} $ to be the orbit of this point, so that $ x_0^{m,n}, x_{2m}^{m,n} \in \cspace_3 $. We write $ x_k^{m,n} = (i_k^{m,n}, s_k^{m,n}, r_k^{m,n}) $ and denote the perimeter of this orbit by $\ell_{m,n}$.

The point $x_{2m}^{m,n}$ is periodic with coding $ 3(12)^{n-1}13(12)^{m-1}1 $. By uniqueness of the periodic point with this coding, $ x_{2m}^{m,n} = x_0^{n,m} $, and in general,
\begin{equation} \label{interchange}
    x_{2m+k}^{m,n} = x_k^{n,m}.
\end{equation}
It follows from (\ref{involution}) that if $x$ is a periodic point, then $I_0(x)$ is also a periodic point whose coding sequence is the reversal of that of $x$. Therefore, $I_0(x_0^{m,n})$ has coding sequence $ 3(12)^{n-1}13(12)^{m-1}1 $, so $ I_0(x_0^{m,n}) = x_0^{n,m} $.
Making use of (\ref{involution}) and (\ref{interchange}), we have
\begin{equation} \label{involution-orbit}
    x_k^{m,n} = I_0(x_{2m-k}^{m,n})
\end{equation}
for all $k$.
In particular, $ x_m^{m,n} = I(x_m^{m,n}) $ and $ x_{2m+n}^{m,n} = I(x_{2m+n}^{m,n}) $, so these points are perpendicular bounces. If $m$ is even, then the perpendicular bounce $x_m^{m,n}$ will occur on scatterer $2$, while if $m$ is odd, it will occur on scatterer $1$. Similarly, the parity of $n$ determines whether the perpendicular bounce $x_{2m+n}^{m,n}$ occurs on scatterer $1$ or $2$.

A special case of these types of periodic orbits are the cyclicity-$1$ orbits with coding $ 3(12)^{n-1}1 $. We define $x_0^n$ to be the point with this coding on $\cspace_3$, $ \set{x_k^n}_{k=0}^{2n-1} $ to be the orbit of this point, and $\ell_n$ to be the perimeter of this orbit.
The cyclicity-$2$ orbit $ \set{x_k^{n,n}}_{k=0}^{4n-1} $ is exactly this orbit repeated twice, so $ \ell_n = \frac{1}{2} \ell_{n,n} $.

We denote the homoclinic point on $\cspace_3$ with coding $ (21)^\infty3(12)^\infty $ by $x_0^\infty$ and its orbit by $ \set{x_k^\infty}_{k \in \Z} $.
Then, $ x_k^\infty = I(x_{-k}^\infty) $, and for all $ k \in \Z $, $ x_k^{m,n} \rightarrow x_k^\infty $ and $ x_{2m+k}^{m,n} \rightarrow x_k^\infty $ as $ m,n \rightarrow \infty $.
We write $ x_k^\infty = (i_k^\infty, s_k^\infty,r_k^\infty) $.
This homoclinic orbit is illustrated in Figure \ref{fig:homoclinic}.

\begin{figure}[ht]
    \centering
    \begin{tikzpicture}
        \clip (-1.5,-5) rectangle (15,5.5);
        \fill[black!25] (0,9.4) ellipse (12.2 and 6.9);
        \fill[black!25] (0,-13.9) ellipse (13.3 and 11.4);
        \fill[black!25] (18,2.5) ellipse (5.5 and 4.1);
        \node at (3,4.2) {$D_1$};
        \node at (3,-4.1) {$D_2$};
        \node at (14,1.7) {$D_3$};
		
		\draw[fill] (0,2.5) circle[radius=1.5pt];
		\draw[fill] (0,-2.5) circle[radius=1.5pt];
		\draw (0,2.5) -- (0,-2.5);
		\node[above] at (-.45,2.5) {$\gamma_1(0)$};
		\node[below] at (-.35,-2.5) {$\gamma_2(0)$};
        
		\draw[fill] (12.5413, 3.0017) circle[radius=1.5pt];
		\node[right](A0) at (12.5413, 3.0017) {$\gamma_3(s_0^\infty)$};
		\draw[fill] (7.2940, 3.8690) circle[radius=1.5pt];
		\node[above](A1) at (7.0440, 3.8690) {$\gamma_1(s_1^\infty)$};
		\draw[fill] (2.6911, -2.7358) circle[radius=1.5pt];
		\node[below](A2) at (2.6411, -2.7358) {$\gamma_2(s_2^\infty)$};
		\draw[fill] (1.2973, 2.5391) circle[radius=1.5pt];
		\node[above](A3) at (1.7473, 2.5391) {$\gamma_1(s_3^\infty)$};
		\draw[fill] (0.6002, -2.5116) circle[radius=1.5pt];
		\node[below](A4) at (0.8502, -2.5116) {$\gamma_2(s_4^\infty)$};
		\draw[fill] (0.2999, 2.5021) circle[radius=1.5pt];
		\node[above](A5) at (0.5499, 2.5021) {$\gamma_1(s_5^\infty)$};
		\draw (12.5413, 3.0017) -- (7.2940, 3.8690);
		\draw (7.2940, 3.8690) -- (2.6911, -2.7358);
		\draw (2.6911, -2.7358) -- (1.2973, 2.5391);
		\draw (1.2973, 2.5391) -- (0.6002, -2.5116);
		\draw (0.6002, -2.5116) -- (0.2999, 2.5021);
		\draw (0.2999, 2.5021) -- (0.2038, -0.4995);
    \end{tikzpicture}
    \caption{The homoclinic orbit $ \set{x_k^\infty}_{k \in \Z} $, with $ x_k^\infty = \parens{i_k^\infty, s_k^\infty, r_k^\infty} $, and the period-$2$ orbit $ \set{(1,0,0), (2,0,0)} $.}
    \label{fig:homoclinic}
\end{figure}
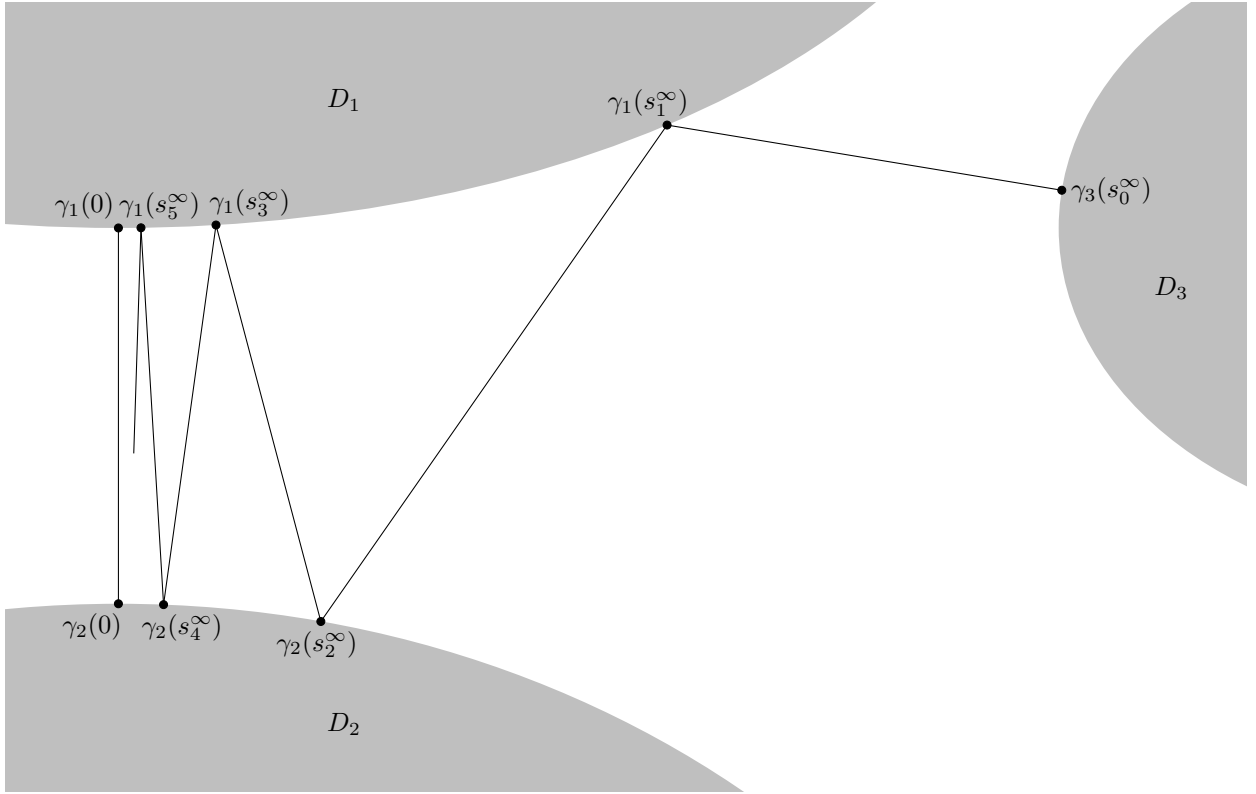

For $ i \in \set{1,2} $, we denote the stable manifold of the fixed point $(i,0,0)$ of $F^2$ by $W_i^s$ and the unstable manifold by $W_i^u$. Then, $ x_k^\infty \in W_1^s $ and $ x_{-k}^\infty \in W_1^u $ for all odd $ k > 0 $, and $ x_k^\infty \in W_2^s $ and $ x_{-k}^\infty \in W_2^u $ for all even $ k > 0 $.

In this paper, we prove the following:

\begin{thmain} \label{th:conj}
    Let $\mathcal{B}$ be the class of $3$-scatterer dispersing billiard systems in the plane satisfying the non-eclipse condition where the scatterers have analytic boundaries.
    Two systems in $\mathcal{B}$ have the same length spectral data $\ell_{m,n}$ if and only if there is a local analytic conjugacy between their collision maps defined in some neighborhood of the homoclinic orbit $ \set{x_k^\infty}_{k \in \Z} $.
\end{thmain}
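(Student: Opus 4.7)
The backward implication is straightforward. My plan is to note that any analytic conjugacy $\Phi$ near the homoclinic orbit sends periodic orbits of $F$ to periodic orbits of $F'$ with the same symbolic coding, so $\Phi$ maps each orbit $\{x_k^{m,n}\}$ in the first system to the corresponding orbit in the second for all sufficiently large $m,n$. Since both collision maps are exact symplectic with the Euclidean step length as generating function, area-preservation forces $\Phi^*\omega' = c\,\omega$ for a constant $c$, and after rescaling the periodic actions $\ell_{m,n}$ become conjugacy invariants. The substance of the theorem lies in the forward implication.

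For the forward direction I plan to use the two-parameter family $\{\ell_{m,n}\}$ to recover, step by step, the analytic germ of $F$ at each point of the homoclinic orbit $\{x_k^\infty\}$. The starting point is the asymptotic expansion
\[
\ell_{m,n} = (m+n)\ell_0 + \ell_\infty + \sum_{j,k \ge 1} c_{jk}\, \lambda^{-2jm}\lambda^{-2kn} + \text{cross terms},
\]
where $\ell_0$ is the period-$2$ orbit length, $\lambda$ is its expanding eigenvalue, $\ell_\infty$ is twice the action of the homoclinic orbit, and the coefficients $c_{jk}$ encode the Birkhoff normal form of $F^2$ at the hyperbolic fixed points together with higher derivatives of $F$ along $\{x_k^\infty\}$. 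Matching the dominant exponential terms recovers $\lambda$; matching $\ell_\infty$ and then the pure-$m$ and pure-$n$ series recovers the Birkhoff normal form at the period-$2$ orbit stage by stage, while the mixed $(m,n)$ coefficients capture the transverse homoclinic data.

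The main obstacle is the bootstrap, namely, showing that the scalar data $\{\ell_{m,n}\}$ determines every Taylor coefficient of $F$ at every $x_k^\infty$. I would organize this as a triangular induction on the order of the derivative, using the involution and interchange relations (\ref{interchange}) and (\ref{involution-orbit}) to propagate recovered jets along the orbit and the chain rule for $F$ to relate jets at $x_k^\infty$ to jets at $x_{k+1}^\infty$. At each level of the induction one must verify that the corresponding asymptotic coefficients of $\ell_{m,n}$ carry the right number of independent components, and this counting, together with the non-degeneracy of the homoclinic intersection, is the crux of the argument. Once all jets are matched, I would construct $\Phi$ by first putting $F^2$ and $(F')^2$ into a common analytic normal form near their fixed points, then extending $\Phi$ through the matched jets along the stable and unstable manifolds of the period-$2$ orbit, and finally filling in a neighborhood by local product structure; analyticity of $\Phi$ follows from analyticity of both collision maps together with uniqueness of analytic extensions.
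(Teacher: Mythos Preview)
Your proposal has two genuine gaps, one conceptual and one technical.

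\textbf{Conceptual gap (forward direction).} You aim to recover ``the analytic germ of $F$ at each point of the homoclinic orbit.'' This is too strong: the data $\ell_{m,n}$ are conjugacy invariants, so they cannot possibly determine the jets of $F$ itself, only objects invariant under analytic change of coordinates. The paper does not attempt this. Instead, it observes (end of Section~\ref{sec:Birkhoff}) that two systems are locally analytically conjugate near the homoclinic orbit if and only if they share the same Birkhoff normal form $N$ and the same gluing map $G$ (Definition~\ref{def:G}); the involution condition and the sign convention $\xi_\infty>0$ make both of these canonically determined by the system. The forward implication then reduces to recovering the two analytic functions $\mu$ and $\tilde M$ (the generating function of $G\circ I$ in the energy coordinates) from $\ell_{m,n}$, not the jets of $F$.

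\textbf{Technical gap (the expansion).} Your asymptotic expansion $\ell_{m,n}=(m+n)\ell_0+\ell_\infty+\sum c_{jk}\lambda^{-2jm}\lambda^{-2kn}+\cdots$ is not of the correct form. The paper proves (Proposition~\ref{prop:length-action} and Corollary~\ref{cor:l-UT}) a closed formula
\[
\ell_{m,n}=(2m+2n)\ell_0+2L_\infty+2m\,\Sigma(h_A)+2n\,\Sigma(h_B)+h_A+h_B-2\tilde M(h_A,h_B),
\]
and that $h_A,h_B$, and hence $\ell_{m,n}$, expand as \emph{triangular} series $\sum g_{pq}^{ij}\,m^i n^j z_A^p z_B^q$ with $z_A=\xi_\infty^2\lambda^{2m}$, $z_B=\xi_\infty^2\lambda^{2n}$: the exponential factors carry polynomial prefactors in $m,n$, and the triangularity $\deg\,[\,\cdot\,]_{pq}\le(p,q)$ is exactly what makes the inversion well-posed. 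The recovery is then not a counting argument but an explicit induction (Lemmas~\ref{lem:hc} and~\ref{lem:lc}): one shows $g_{pq}^{00}=-2(p+q+1)a_{pq}+\mathcal P_{p+q-1}$ and $g_{p+1,0}^{10}=-\tfrac{2}{p+1}\delta_p-4p(p+1)\delta_1 a_{p0}+\mathcal P_{p-1}$, which lets one solve for $a_{pq}$ and $\delta_p$ from the $g$'s and previously recovered coefficients.

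\textbf{Backward direction.} Your argument that a conjugacy preserves $\ell_{m,n}$ ``after rescaling'' does not close: the theorem asserts literal equality of $\ell_{m,n}$, not equality up to a constant factor, and an arbitrary analytic conjugacy need not respect the specific generating functions $L_{ij}$. The paper's argument is simply that the formula above expresses $\ell_{m,n}$ entirely through $\mu$ and $\tilde M$; since analytic conjugacy near the homoclinic orbit is equivalent to equality of $N$ and $G$, it forces equality of $\ell_{m,n}$.
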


\begin{thmain} \label{th:fixed}
   If two systems in $\mathcal{B}$ have the same length spectral data $\ell_{m,n}$ and the region $ D_1 \cup D_2 $ in one system is isometric to that in the other system, then the entire billiard tables are isometries of each other.
\end{thmain}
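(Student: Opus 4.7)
My plan is to apply Theorem \ref{th:conj} to obtain an analytic conjugacy $\Phi$ between the two collision maps near the homoclinic orbit, then use the hypothesis $D_1 = D_1'$ and $D_2 = D_2'$ (achieved after applying the given isometry, with labeling forced by the coding) to pin $\Phi$ to the identity on the portion of its domain lying in $\cspace_1 \cup \cspace_2$, and finally invert this geometrically to conclude $D_3 = D_3'$. By Theorem \ref{th:conj}, equality of the $\ell_{m,n}$ produces $\Phi$ with $\Phi \circ \cmap = \cmap' \circ \Phi$ on an open neighborhood $\mathcal{N}$ of $\set{x_k^\infty}_{k \in \Z}$. Since this orbit accumulates at the hyperbolic period-$2$ fixed points $(1,0,0), (2,0,0)$ of $\cmap^2$, which are intrinsically the same phase-space points in both systems, I enlarge $\mathcal{N}$ to include them via boundedness of $\Phi$ and removability of analytic singularities; $\Phi$ fixes these points.

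The main step is to show $\Phi = \id$ on $\mathcal{N} \cap (\cspace_1 \cup \cspace_2)$. Near any $x_k^\infty$ with $k \neq 0$, both systems share the same local dynamics, so $\Phi$ commutes with $\cmap$. Placing $\cmap^2$ in its Koenigs normal form $(u,v) \mapsto (\lambda u, v/\lambda)$ near $(1,0,0)$ in suitably rescaled coordinates along $W_1^u, W_1^s$ (so that $I_0$ takes the form $(u,v) \mapsto (v, u)$), a short power-series calculation shows that any analytic diffeomorphism commuting with this normal form, preserving area, and intertwining $I_0$ is the identity. Area preservation is automatic: from $\Phi \circ \cmap = \cmap' \circ \Phi$ together with $\det D\cmap = \det D\cmap' = 1$, one sees that $\det D\Phi$ is $\cmap$-invariant and hence constant by analyticity along the orbit, with value $1$ at the fixed point. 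The equivariance $\Phi \circ I_0 = I_0 \circ \Phi$ holds because $I_0 \circ \Phi \circ I_0$ is also a conjugacy $\cmap \to \cmap'$ (using (\ref{involution}) in both systems) that agrees with $\Phi$ on the $I_0$-invariant homoclinic orbit, hence on $\mathcal{N}$ by analytic continuation. Symmetry handles $(2,0,0)$, and propagation via $\Phi = \cmap^{-1} \circ \Phi \circ \cmap$ then extends $\Phi = \id$ to neighborhoods of every $x_k^\infty$ with $k \neq 0$.

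For $(3, s, r)$ near $x_0^\infty$, both $\cmap(3, s, r)$ and $\cmap^{-1}(3, s, r)$ lie in $\cspace_1$, where $\Phi = \id$; the conjugacy relations therefore read $\cmap(3, s, r) = \cmap'(\Phi(3, s, r))$ and $\cmap^{-1}(3, s, r) = \cmap'^{-1}(\Phi(3, s, r))$. Writing $\Phi(3, s, r) = (3, \phi(s, r), \psi(s, r))$, the first relation places the outgoing rays from $\gamma_3(s)$ and from $\gamma_3'(\phi(s, r))$ (each terminating at the common next bounce on $D_1$ with the common direction) on a single line, and the second places the two incoming rays on a single line. For $r$ near but not equal to $0$ these two lines are transverse, so they meet in a unique point, which must equal both $\gamma_3(s)$ and $\gamma_3'(\phi(s, r))$. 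Since $\gamma_3(s)$ does not depend on $r$, neither does $\phi$, and the identity $\gamma_3(s) = \gamma_3'(\phi(s))$ shows that $\partial D_3$ and $\partial D_3'$ parametrize the same curve near $\gamma_3(s_0^\infty)$; analyticity of the boundaries extends this to $D_3 = D_3'$.

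The hardest part of this plan is the centralizer step: extending $\Phi$ analytically to the hyperbolic fixed points, establishing its area-preservation and $I_0$-equivariance in light of the construction in Theorem \ref{th:conj}, and running the power-series centralizer computation in the Koenigs normal form (including ruling out the orientation-reversing branch $h \equiv -1$ that the algebra formally permits, using that $\Phi$ maps the homoclinic orbit to its primed counterpart from the same side of the stable manifold). Once these are in place, the propagation along $\cspace_1 \cup \cspace_2$ and the final geometric inversion at $x_0^\infty$ are essentially routine.
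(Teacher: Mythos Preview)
Your route is genuinely different from the paper's. The paper works constructively: since $D_1,D_2$ determine $\Phi_1,\Phi_2$, and Theorem~\ref{th:conj} recovers the Birkhoff coordinates of the cyclicity-$1$ orbits, one obtains the actual collision points $x_k^n$ for $1\le k\le 2n-1$; the perimeter $\ell_n$ then yields the missing distance $L_{31}(s_0^n,s_1^n)$, and $\gamma_3(s_0^n)$ is found by traveling that distance from $\gamma_1(s_1^n)$ along the known ray direction. You instead pin the conjugacy to the identity on $\cspace_1\cup\cspace_2$ and use a two-ray intersection near $x_0^\infty$; this avoids the perimeter bookkeeping and is conceptually clean. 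Your centralizer step also makes explicit something the paper's proof takes for granted, namely that $\Phi_1,\Phi_2$ are \emph{uniquely} determined (up to $\pm\id$) by $D_1,D_2$ together with the involution condition.

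There are, however, two real gaps. First, there is no ``Koenigs normal form'' here: the eigenvalues $\lambda^{\pm 2}$ of $D\cmap^2(1,0,0)$ satisfy the resonance $\lambda^2\cdot\lambda^{-2}=1$, so $\cmap^2$ is \emph{not} analytically linearizable in general. You must use the Birkhoff normal form $N^2$ of Theorem~\ref{th:BNF}. Fortunately the centralizer conclusion survives: by Theorem~\ref{th:BNF} any analytic area-preserving self-conjugacy of $N^2$ has the form $(\xi,\eta)\mapsto(\tau(\xi\eta)\xi,\eta/\tau(\xi\eta))$, and commutation with $I$ forces $\tau^2\equiv 1$. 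Second, your argument for $I_0$-equivariance --- that $\Phi$ and $I_0\circ\Phi\circ I_0$ agree on the countable homoclinic orbit and hence everywhere by analytic continuation --- fails in two variables; agreement on a discrete set, or even on the one-dimensional set $W_1^s\cup W_1^u$, does not force equality of analytic maps on an open domain. Instead, invoke the construction behind Theorem~\ref{th:conj} directly: the conjugacy on $V_i$ is $\Phi_i'\circ\Phi_i^{-1}$, and parts (d),(e) of Proposition~\ref{prop:BNF} applied in both systems give the $I_0$-equivariance for free.
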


Theorems \ref{th:conj} and \ref{th:fixed} can be extended to dispersing billiard systems with more than three scatterers satisfying the non-eclipse condition in the same way as the results in \cite{DKL}, but we do not do this explicitly.

To prove Theorem \ref{th:conj}, we follow four major steps.
In Section \ref{sec:Birkhoff}, we show that the collision map can be conjugated to a Birkhoff normal form $N$ near the $2$-periodic orbit. We extend the conjugacy to a neighborhood of the entire homoclinic orbit by defining a gluing map $G$ near the homoclinic point $x_0^\infty$.
In Section \ref{sec:series}, we define a type of asymptotic power series and show that the Birkhoff energies of the cyclicity-$2$ orbits can be expressed as such approximations.
In Section \ref{sec:length}, we show how the perimeters of periodic orbits can be computed from their dynamics in normal form. In the process, we show that the length spectral data $\ell_{m,n}$ has a similar type of asymptotic series expansion.
In Section \ref{sec:recover}, we examine certain coefficients in this asymptotic series for $\ell_{m,n}$ more closely and relate them to certain coefficients in the power series expansions of $N$ and $G$. We show that this provides sufficient information to recover $N$ and $G$ completely, and thus completely determines the dynamics of the conjugated system in normal form.
As part of this proof, we will show that the length spectral data can be determined entirely from $N$ and $G$, and thus the converse statement in Theorem \ref{th:conj} is also true.

The proof of Theorem \ref{th:conj} is motivated by that in \cite{DKL}, where the authors recover the normal form $N$ and partial information on the gluing map $G$ from the length spectral data of the cyclicity-$1$ orbits.
We use many of the same methods, but we use the cyclicity-$2$ orbits rather than the cyclicity-$1$ orbits as we need to recover the function $G$ entirely.

In the case where the region $ D_1 \cup D_2 $ has two axes of reflexive symmetry, the normal form $N$ is uniquely determined by the length spectrum (see \cite{Verdiere, DKL}). However, in the general case, or even assuming one axis of symmetry, this cannot be true, and the converse statement of Theorem \ref{th:conj} suggests that one cannot determine the geometry of scatterers $1$ and $2$ near the $2$-periodic points from the length spectrum without also considering the behavior of the system elsewhere.
We discuss this in more detail in Section \ref{sec:conclusion}.

To prove Throrem \ref{th:fixed}, we use the fact that the region $ D_1 \cup D_2 $ uniquely determines the conjugacy of the normal form system to the original billiard system on $ \cspace_1 \cup \cspace_2 $. We then have knowledge of not only the dynamics of the orbits $\set{x_k^n}$ in the conjugated system in normal form, but also of the dynamics of these orbits in the billiard system for all points except one.
A short geometric argument recovers the remaining point and the scatterer $D_3$ uniquely.

\section{Dynamics in Normal Form} \label{sec:Birkhoff}

Since we are concerned with analyzing dynamics up to an analytic conjugacy, we first conjugate the dynamics into some normal form.
Some of the results in this section parallel those in \cite{DKL,KLZ}.
Recall the following:

\begin{theorem} \label{th:BNF}
    Let $U$ be an open subset of $\R^2$ containing the origin and $ F: U \rightarrow \R^2 $ be an area-preserving and orientation-preserving analytic map with a hyperbolic fixed point at $(0,0)$ such that the eigenvalues of $DF(0,0)$ are $\lambda$ and $\lambda^{-1}$ for some $ \lambda \in (0,1) $. Then, there is a unique analytic function $\mu$ such that $F$ is conjugate to the \keyterm{Birkhoff normal form}
    \begin{equation} \label{N-def}
        N: (\xi,\eta) \mapsto \parens{\mu(\xi\eta) \xi, \frac{\eta}{\mu(\xi\eta)}}
    \end{equation}
    by an analytic area-preserving and orientation-preserving function $\Phi$, i.e., $ \Phi \circ N = F \circ \Phi $ near the origin.
    Furthermore, if $\tilde{\Phi}$ is another analytic map such that $ \tilde{\Phi} \circ N = F \circ \tilde{\Phi} $, then $ \tilde{\Phi} = \Phi \circ \Psi $, where $\Psi$ takes the form
    \begin{equation} \label{Psi-BNF}
        \Psi: (\xi,\eta) \mapsto \parens{\tau(\xi\eta) \xi, \frac{\eta}{\tau(\xi\eta)}}
    \end{equation}
    for some analytic function $\tau$.
\end{theorem}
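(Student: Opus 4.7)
The plan is to prove Theorem \ref{th:BNF} in the classical fashion: construct $\Phi$ and $\mu$ first as formal power series, invoke a known convergence result for analyticity, and handle uniqueness separately by a commutator analysis. First I would put $F$ into convenient coordinates: an affine change sends the eigenspaces of $DF(0,0)$ to the axes, and the analytic stable/unstable manifold theorems provide an analytic area-preserving change of coordinates in which $W^s = \set{\eta = 0}$ and $W^u = \set{\xi = 0}$. In these coordinates $F$ preserves each axis, so it has the form $F(\xi,\eta) = (a(\xi,\eta)\xi, b(\xi,\eta)\eta)$ with $a(0,0) = \lambda$ and $b(0,0) = \lambda^{-1}$.

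Next I would construct $\Phi$ and $\mu$ formally by matching coefficients in $F \circ \Phi = \Phi \circ N$ degree by degree. At each degree, the resulting cohomological equation on the new coefficients of $\Phi$ can be solved on all non-resonant monomials; the only residual obstructions are the resonant monomials $\xi^{k+1}\eta^k$ in the first component and $\xi^k \eta^{k+1}$ in the second (those carrying eigenvalue factor $\lambda^{\pm 1}$), and area preservation forces these two families to combine into a single formal power series $\mu(\xi\eta)$, producing precisely the normal form (\ref{N-def}). The main obstacle is convergence: the naive small-divisor estimates produced by the inductive construction are too weak to directly imply that the formal series for $\Phi$ converges. This is supplied by Moser's classical theorem for area-preserving analytic maps at a hyperbolic fixed point, whose proof exploits symplectic cancellations through majorant methods; I would invoke this result to conclude that $\Phi$ and $\mu$ are analytic on a neighborhood of the origin.

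For uniqueness, suppose $\tilde\Phi$ is another such conjugacy and set $\Psi := \Phi^{-1} \circ \tilde\Phi$, so that $\Psi$ is analytic, area-preserving, orientation-preserving, and satisfies $\Psi \circ N = N \circ \Psi$. Since the coordinate axes are the stable and unstable manifolds of the hyperbolic fixed point of $N^2$, they are preserved by $\Psi$, so $\Psi(\xi,\eta) = (\xi S(\xi,\eta), \eta R(\xi,\eta))$ with $S, R$ analytic. Since $H := \xi\eta$ is $N$-invariant, $H \circ \Psi$ is also $N$-invariant; decomposing an analytic function into weight components under the $N$-action, where the weight-$k$ part collects monomials $\xi^i \eta^j$ with $i - j = k$, and using $\mu(0) = \lambda$ with $\lambda^k \neq 1$ for $k \neq 0$, shows that any $N$-invariant analytic function is a function of $H$ alone. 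Applying the same weight decomposition to the commutation relation then forces $S$ and $R$ to be analytic functions of $H$. A direct Jacobian computation gives $\det D\Psi(\xi,\eta) = (H\, S(H)\, R(H))'|_{H=\xi\eta}$, so area preservation forces $S(H)\, R(H) \equiv 1$; setting $\tau := S$ yields the claimed form (\ref{Psi-BNF}).
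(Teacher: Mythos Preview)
The paper does not actually prove Theorem~\ref{th:BNF}: immediately after stating it, the paper attributes the existence of the normal form to Birkhoff and the analyticity of the conjugacy together with the description of any second conjugacy $\tilde\Phi$ to Moser, and then moves on. So there is no ``paper's own proof'' to compare against; the theorem is quoted as background.

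Your outline is a faithful sketch of the Birkhoff--Moser argument that the paper is citing: formal construction via resonance analysis, convergence by Moser's majorant method, and uniqueness by analyzing the centralizer of $N$. Two small points are worth tightening. First, you do not explicitly address the uniqueness of $\mu$ itself (as opposed to the relation between two conjugacies to the \emph{same} $N$); this follows because the function $h \mapsto \mu(h)$ is determined by the restriction of $F$ to invariant curves of constant $\xi\eta$ and is thus a conjugacy invariant, but it deserves a sentence. Second, in the uniqueness step the theorem as stated only assumes $\tilde\Phi$ is analytic, not area-preserving; your Jacobian argument for $S(H)R(H)\equiv 1$ uses area preservation of $\Psi$, so you should either note that any analytic conjugacy between area-preserving maps with the given linearization is automatically area-preserving (which is true here and is part of Moser's result), or treat this as an implicit hypothesis. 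The reference to ``$N^2$'' when locating the invariant manifolds is a slip; $N$ itself already has the axes as stable and unstable manifolds.
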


It was originally proven by Birkhoff \cite{Birkhoff} that any area-preserving function on $\R^2$ is locally conjugate to such a normal form near a hyperbolic fixed point where the derivative has positive eigenvalues. Analyticity of the conjugacy and the relationship between two conjugacies $\Phi$ and $\tilde{\Phi}$ was proven by Moser \cite{Moser}.
Note that the quantity $\xi\eta$, the \keyterm{Birkhoff energy}, is invariant under $N$.

We will prove that there is a local semi-conjugacy from $\cmap$ to a Birkhoff normal form. We call the image of a point under this semi-conjugacy its \keyterm{Birkhoff coordinates}.
We define an involution in Birkhoff coordinates, $ I: (\xi,\eta) \mapsto (\eta,\xi) $, so that
\begin{equation} \label{involution-Birkhoff}
    I \circ N = N^{-1} \circ I.
\end{equation}
We now show that we can conjugate the collision map near the $2$-periodic orbit to a Birkhoff normal form in a manner so that the time-reversing involution is $I$ in Birkhoff coordinates. 

\begin{proposition} \label{prop:BNF}
    There exist open neighborhoods $ U_1,U_2 \subseteq \R^2 $ of $(0,0)$, $ V_1 \subseteq \cspace_1 $ of $(1,0,0)$, and $ V_2 \subseteq \cspace_2 $ of $(2,0,0)$, a unique analytic function $ \mu: (-h_*, h_*) \rightarrow (0,1) $ for some $ h_* > 0 $, and analytic, area-preserving, and orientation-preserving diffeomorphisms $ \Phi_1: U_1 \rightarrow V_1 $ and $ \Phi_2: U_2 \rightarrow V_2 $ such that, for $N$ defined by (\ref{N-def}) and $\cmap_{12}$ and $\cmap_{21}$ defined in the introduction:
    \begin{enumerate}[label = (\alph*)]
        \item $ \mu(0) = \lambda $, where $\lambda^2$ is the smallest eigenvalue of $D\cmap^2(0,0)$
        \item $ \Phi_2 \circ N = \cmap_{12} \circ \Phi_1 $ everywhere on $ U_1 \cap N^{-1}(U_2) $
        \item $ \Phi_1 \circ N = \cmap_{21} \circ \Phi_2 $ everywhere on $ U_2 \cap N^{-1}(U_1) $
        \item $ \Phi_1 \circ I = I_0 \circ \Phi_1 $ everywhere on $U_1$
        \item $ \Phi_2 \circ I = I_0 \circ \Phi_2 $ everywhere on $U_2$
    \end{enumerate}
\end{proposition}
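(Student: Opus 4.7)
The plan is to obtain $\Phi_1$ by applying the Birkhoff Normal Form theorem to $\cmap^2$ at the hyperbolic fixed point $(1,0,0)$, to arrange property (d) by post-composing with a correction of the form (\ref{Psi-BNF}), and then to define $\Phi_2$ simply by pushing the $\Phi_1$-conjugacy forward one step of the dynamics. Since dispersing billiards are hyperbolic with positive multipliers along periodic orbits, $D\cmap^2(1,0,0)$ has eigenvalues $\lambda^2$ and $\lambda^{-2}$ for some $\lambda \in (0,1)$, so Theorem \ref{th:BNF} applied to $F=\cmap^2$ yields a unique analytic $\tilde\mu$ with $\tilde\mu(0)=\lambda^2$ and an analytic area- and orientation-preserving $\Phi_1: U_1\to V_1$ with $\Phi_1\circ\tilde N=\cmap^2\circ\Phi_1$, where $\tilde N(\xi,\eta)=(\tilde\mu(\xi\eta)\xi,\eta/\tilde\mu(\xi\eta))$. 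Setting $\mu:=\sqrt{\tilde\mu}$, which is analytic since $\tilde\mu(0)>0$, determines $\mu$ uniquely with $\mu(0)=\lambda$, giving (a); and the corresponding $N$ in (\ref{N-def}) satisfies $N^2=\tilde N$.

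For (d) I would first form the candidate $\tilde\Phi_1:=I_0\circ\Phi_1\circ I$. Chaining the relations $I\circ N^2=N^{-2}\circ I$ (from (\ref{involution-Birkhoff})) and $I_0\circ\cmap^2=\cmap^{-2}\circ I_0$ (from (\ref{involution})) shows $\tilde\Phi_1$ also conjugates $N^2$ to $\cmap^2$; it is area- and orientation-preserving because $I$ and $I_0$ each reverse both and the two sign changes cancel. The uniqueness clause of Theorem \ref{th:BNF} therefore yields $\tilde\Phi_1=\Phi_1\circ\Psi$ for some analytic $\tau$ with $\Psi(\xi,\eta)=(\tau(\xi\eta)\xi,\eta/\tau(\xi\eta))$. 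A short linear-algebra computation at $(0,0)$ using that $DI_0(1,0,0)=\mathrm{diag}(1,-1)$ swaps the stable and unstable eigendirections of $D\cmap^2(1,0,0)$ (which for this symmetric matrix are of the form $(b,\pm\kappa)$) together with the constraint $\det D\Phi_1(0)=1$ gives $\tau(0)>0$. Hence $\tau_0:=\sqrt\tau$ is analytic; replacing $\Phi_1$ by $\Phi_1\circ\Psi_0$ with $\Psi_0(\xi,\eta):=(\tau_0(\xi\eta)\xi,\eta/\tau_0(\xi\eta))$ preserves the conjugacy because $\Psi_0$ commutes with $N^2$, and using the identity $\Psi_0\circ I=I\circ\Psi_0^{-1}$ turns the relation $I_0\circ\Phi_1\circ I=\Phi_1\circ\Psi$ into precisely (d).

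Finally I would define $\Phi_2:=\cmap_{12}\circ\Phi_1\circ N^{-1}$ on $U_2:=N(U_1)$, shrinking $U_1$ if necessary so that $\Phi_1(U_1)$ lies in the domain of $\cmap_{12}$. As a composition of analytic area- and orientation-preserving diffeomorphisms, $\Phi_2$ is one, onto a neighborhood $V_2$ of $(2,0,0)$. Then (b) is the tautology $\Phi_2\circ N=\cmap_{12}\circ\Phi_1\circ N^{-1}\circ N=\cmap_{12}\circ\Phi_1$, and (c) follows from $\cmap^2=\cmap_{21}\circ\cmap_{12}$ near the fixed point via $\cmap_{21}\circ\Phi_2=\cmap^2\circ\Phi_1\circ N^{-1}=\Phi_1\circ N$. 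For (e) I compute $\Phi_2\circ I=\cmap_{12}\circ\Phi_1\circ I\circ N=\cmap_{12}\circ I_0\circ\Phi_1\circ N$ using $I\circ N^{-1}=N\circ I$ and (d), then apply $\cmap_{12}\circ I_0=I_0\circ\cmap_{21}^{-1}$ (an immediate consequence of (\ref{involution})) together with (c) to obtain $I_0\circ\Phi_2$. The chief obstacle is Step 2: both recognizing that $\tilde\Phi_1$ must differ from $\Phi_1$ by a normal-form symmetry $\Psi$ whose square undoes the discrepancy, and verifying that this square root is real-analytic, i.e.\ that $\tau(0)>0$.
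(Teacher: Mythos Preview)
Your proposal follows the same architecture as the paper's proof: obtain $\Phi_1$ and $N^2$ from the Birkhoff normal form of $\cmap^2$, take $N$ as the square root, define $\Phi_2:=\cmap_{12}\circ\Phi_1\circ N^{-1}$, and derive (b), (c), (e) by the very same algebraic manipulations. The only difference is that the paper packages property (d) into Lemma~\ref{lem:F2-BNF} and cites \cite{DKL} for its proof, whereas you supply the argument directly via the second conjugacy $\tilde\Phi_1=I_0\circ\Phi_1\circ I$ and the square-root correction $\Psi_0$; this is exactly the standard route and is how Lemma~\ref{lem:F2-BNF} is proved in the reference.

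One small correction in your Step~2: the matrix $D\cmap^2(1,0,0)$ is not symmetric in general. What the relation $I_0\circ\cmap^2=\cmap^{-2}\circ I_0$ forces at the fixed point is only that the two diagonal entries coincide, say $D\cmap^2(1,0,0)=\begin{pmatrix} a & b\\ c & a\end{pmatrix}$ with $a^2-bc=1$. The eigenvectors are then proportional to $(b,\pm\kappa)$ with $\kappa=\sqrt{a^2-1}>0$, so your claim that $DI_0$ swaps the eigendirections is correct. Tracing through $D\tilde\Phi_1(0)=D\Phi_1(0)\cdot\mathrm{diag}(\tau(0),\tau(0)^{-1})$ together with $\det D\Phi_1(0)=1$ gives $\tau(0)=2b\kappa\beta^2$ for some real $\beta\neq 0$, so the sign of $\tau(0)$ equals the sign of $b$. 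That $b>0$ is where the dispersing hypothesis enters (it is a standard computation from the explicit formula for the billiard differential in $(s,r)$ coordinates). With this amendment your sketch of (d) is complete.
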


We first need the normal form of $F^2$, for which $(1,0,0)$ is a fixed point. The following lemma follows from Proposition \ref{prop:BNF} and is proven in \cite{DKL}:

\begin{lemma} \label{lem:F2-BNF}
    There is an analytic area-preserving and orientation-preserving map $\Phi_1$ and a unique Birkhoff normal form $N^2$ defined locally near the origin such that
    \begin{equation} \label{F2-BNF}
        \Phi_1 \circ N^2 = \cmap^2 \circ \Phi_1
    \end{equation}
    and $ \Phi_1 \circ I = I_0 \circ \Phi_1 $ locally near the fixed point $(1,0,0)$.
\end{lemma}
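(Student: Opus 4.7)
The plan is to deduce Lemma \ref{lem:F2-BNF} almost directly from Proposition \ref{prop:BNF} by composing the one-step semi-conjugacies. Proposition \ref{prop:BNF} supplies analytic, area-preserving, orientation-preserving diffeomorphisms $\Phi_1, \Phi_2$ and a Birkhoff normal form $N$ with the conjugacy relations $\Phi_2 \circ N = \cmap_{12} \circ \Phi_1$ and $\Phi_1 \circ N = \cmap_{21} \circ \Phi_2$, together with the symmetry $\Phi_1 \circ I = I_0 \circ \Phi_1$. Since near $(1,0,0)$ the square of the collision map factors as $\cmap^2 = \cmap_{21} \circ \cmap_{12}$, the required conjugacy for $\cmap^2$ should come by chaining these two relations.

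Concretely, I would first observe that since the Birkhoff energy $\xi\eta$ is invariant under $N$, one has the explicit formula $N^2(\xi,\eta) = \parens{\mu(\xi\eta)^2 \xi,\, \eta/\mu(\xi\eta)^2}$, which is itself a Birkhoff normal form, now with coefficient function $\mu^2$. Next, I would shrink the neighborhood $U_1$ from the proposition (if necessary) so that $N(U_1) \subseteq U_2$ and $N^2(U_1) \subseteq U_1$; this is automatic since $N$ is a local diffeomorphism fixing the origin. On this shrunken neighborhood, chaining parts (b) and (c) of the proposition yields
\begin{equation*}
\Phi_1 \circ N^2 \;=\; (\Phi_1 \circ N) \circ N \;=\; \cmap_{21} \circ \Phi_2 \circ N \;=\; \cmap_{21} \circ \cmap_{12} \circ \Phi_1 \;=\; \cmap^2 \circ \Phi_1,
\end{equation*}
which is exactly (\ref{F2-BNF}). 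The involution identity $\Phi_1 \circ I = I_0 \circ \Phi_1$ is transferred verbatim from part (d) of the proposition. Uniqueness of $N^2$ then follows from the uniqueness of $\mu$ asserted in Proposition \ref{prop:BNF}, or equivalently from Theorem \ref{th:BNF} applied to $\cmap^2$ at the hyperbolic fixed point $(1,0,0)$, whose hypotheses hold because $\cmap^2$ is analytic, area- and orientation-preserving, and has positive real eigenvalues $\lambda^2, \lambda^{-2}$ at the fixed point.

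The main obstacle at this stage is essentially nonexistent: all of the substantive analytic construction and the required symmetry with respect to the time-reversing involution have been front-loaded into Proposition \ref{prop:BNF}. The only mild technical point is the neighborhood shrinking that ensures the $N$-dynamics genuinely factor through $U_2$, which is a routine continuity argument. The real difficulty lies in proving Proposition \ref{prop:BNF} itself — in particular in arranging a \emph{single} semi-conjugacy $(\Phi_1, \Phi_2)$ that simultaneously intertwines both $\cmap_{12}$ and $\cmap_{21}$ with the \emph{same} one-step normal form $N$ and respects $I_0$ — but that work is assumed here.
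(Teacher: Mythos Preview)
Your derivation is logically valid as an implication, but it is circular within the paper's architecture. Look at the very first line of the paper's proof of Proposition~\ref{prop:BNF}: ``Let $\Phi_1$ and $N^2$ be as in Lemma~\ref{lem:F2-BNF}.'' The paper builds Proposition~\ref{prop:BNF} \emph{from} Lemma~\ref{lem:F2-BNF}, not the other way around: it takes the $\Phi_1$ and $N^2$ supplied by the lemma, extracts the square root $N$, and then \emph{defines} $\Phi_2 := \cmap_{12}\circ\Phi_1\circ N^{-1}$. So when you write ``that work is assumed here,'' you are assuming precisely the statement whose proof requires the lemma you are trying to establish. The sentence in the paper that Lemma~\ref{lem:F2-BNF} ``follows from Proposition~\ref{prop:BNF}'' is acknowledging the trivial reverse implication (or is a slip for Theorem~\ref{th:BNF}); the operative clause is ``and is proven in \cite{DKL},'' i.e., the lemma has an independent proof that does not go through Proposition~\ref{prop:BNF}.

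The intended argument applies Theorem~\ref{th:BNF} directly to $\cmap^2$ at its hyperbolic fixed point $(1,0,0)$ to obtain some conjugacy $\Phi_1$ and the unique normal form $N^2$. The nontrivial additional content --- and the part your proposal does not address --- is arranging the involution compatibility $\Phi_1\circ I = I_0\circ\Phi_1$. This does not come for free from Theorem~\ref{th:BNF}; one must argue, using the uniqueness statement in Theorem~\ref{th:BNF} together with the relation $I_0\circ\cmap^2 = \cmap^{-2}\circ I_0$ and $I\circ N^2 = N^{-2}\circ I$, that $I_0\circ\Phi_1\circ I$ is another conjugacy and hence differs from $\Phi_1$ by a map of the form~(\ref{Psi-BNF}), and then adjust $\Phi_1$ to absorb that discrepancy. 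That is the work done in \cite{DKL} which the paper is citing, and which your chaining argument bypasses by assuming its consequence.
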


\begin{proof}[Proof of Proposition \ref{prop:BNF}]
    Let $\Phi_1$ and $N^2$ be as in Lemma \ref{lem:F2-BNF}. We then have $ \Phi_1 \circ N^2 = \cmap_{21} \circ \cmap_{12} \circ \Phi_1 $, since $ \cmap^2 = \cmap_{21} \circ \cmap_{12} $ near the fixed point $(1,0,0)$.
    The normal form $N$ is then the square root of $N^2$, defined by replacing $\mu(\xi\eta)$ with $\mu(\xi\eta)^{1/2}$ in (\ref{N-def}).
    Conditions (a) and (d) follow directly from Lemma \ref{lem:F2-BNF} and this construction.
    
    Define $ \Phi_2 := \cmap_{12} \circ \Phi_1 \circ N^{-1} $, so that condition (b) is automatically satisfied. The remaining identities can be proven algebraically. The identity (\ref{F2-BNF}) and condition (b) imply
    $$ \Phi_1 \circ N = \Phi_1 \circ N^2 \circ N^{-1} = \cmap_{21} \circ \cmap_{12} \circ \Phi_1 \circ N^{-1} = \cmap_{21} \circ \Phi_2, $$
    proving (c). The identities (b), (\ref{involution-Birkhoff}), (d), (\ref{involution}), and (c) imply
    $$ \Phi_2 \circ I = \cmap_{12} \circ \Phi_1 \circ N^{-1} \circ I = \cmap_{12} \circ \Phi_1 \circ I \circ N = \cmap_{12} \circ I_0 \circ \Phi_1^{-1} \circ N = I_0 \circ \cmap_{21}^{-1} \circ \Phi_1 \circ N = I_0 \circ \Phi_2, $$
    proving (e).
\end{proof}

For any point $ x \in \cspace_1 $ or $ x \in \cspace_2 $, we call the point $ \Phi_1^{-1}(x) $ or $ \Phi_2^{-1}(x) $, respectively, the \keyterm{Birkhoff coordinates} of $x$.
The following lemma is analogous to the argument in Section 3 of \cite{DKL}.

\begin{lemma} \label{lem:extension}
    For some $ n_0 \in \N $, the sets $U_1$, $U_2$, $V_1$, and $V_2$ in Proposition \ref{prop:BNF} can be extended so that $V_1$ contains all points in the orbits $\set{x_k^{m,n}}$ in $ \cspace_1 \cup \cspace_2 $ for $ m,n \geq n_0 $, and statements (a)-(e) in Proposition \ref{prop:BNF} remain satisfied.
\end{lemma}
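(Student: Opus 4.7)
The plan is to extend the conjugacies $\Phi_1$ and $\Phi_2$ by analytic continuation along the dynamics, in the spirit of Section 3 of \cite{DKL}. For every cyclicity-$2$ orbit point $x_k^{m,n}$ that lies outside the original neighborhoods, I want to find an iterate $\cmap^{\pm 2j}(x_k^{m,n})$ that lands inside them, and then declare the Birkhoff coordinates of $x_k^{m,n}$ to be $N^{\mp 2j}$ of the coordinates of that iterate. The extended $V_1$ and $V_2$ will be the (possibly disconnected) unions of the original neighborhoods with small open neighborhoods around each boundary orbit point obtained by pulling back open balls in the original $V_1, V_2$.

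First I would use that $x_k^{m,n} \to x_k^\infty$ and $x_{2m+k}^{m,n} \to x_k^\infty$ uniformly on any bounded range of $k$ (as $m,n \to \infty$), combined with $x_k^\infty \to (i,0,0)$ along $W_i^s$ as $k \to +\infty$ and along $W_i^u$ as $k \to -\infty$. Consequently there is a $K$ such that $x_k^\infty$ lies in the original neighborhood of $(i_k^\infty,0,0)$ whenever $|k| \geq K$; a compactness/continuity argument then produces $n_0$ so that for all $m,n \geq n_0$, every orbit point with $K \leq k \leq 2m-K$ (and the analogous range in $[2m,2m+2n]$) already lies in the original $V_1 \cup V_2$. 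Only the finitely many ``boundary'' points $x_k^{m,n}$ with $0 < k < K$, $2m-K < k < 2m$, and the two analogues in the second half, need to be handled.

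For each such boundary index $k$ with $i_k = 1$ and $k$ in the ``left'' range, I pick $j_k$ large enough that $\cmap^{2j_k}(x_k^\infty) = x_{k+2j_k}^\infty$ lies well inside the original $V_1$; continuous dependence on $(m,n)$ then gives the same for $\cmap^{2j_k}(x_k^{m,n})$ once $n_0$ is large enough. Choose a small open ball $W$ in the original $U_1$ (Birkhoff side) around $\Phi_1^{-1}(\cmap^{2j_k}(x_k^{m,n}))$, set $U_1^{(k)} := N^{-2j_k}(W)$, and extend $\Phi_1$ to $U_1^{(k)}$ by $\Phi_1 := \cmap^{-2j_k} \circ \Phi_1^{\text{orig}} \circ N^{2j_k}$. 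For boundary points near the right end (close to $W_1^u$) I iterate backward instead, using $\cmap^{-2j_k}$ and $N^{2j_k}$; the analogous construction handles scatterer $2$ with $\Phi_2$. Conditions (b) and (c) of Proposition \ref{prop:BNF} then hold on each extended piece by construction, and area- and orientation-preservation are inherited from $\cmap$ and $N$.

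The main obstacle is the bookkeeping needed to verify (d) and (e) and to check consistency on overlaps. The involution relation (\ref{involution-orbit}) identifies each ``right-end'' boundary point with the $I_0$-image of a ``left-end'' boundary point, so extending at one via forward iteration and at its partner via backward iteration automatically produces Birkhoff coordinates related by $I$, using (\ref{involution-Birkhoff}) and the original involution identities (d)--(e). If two pullback neighborhoods $U_1^{(k)}$ and $U_1^{(k')}$ overlap, the two extended definitions of $\Phi_1$ agree on the overlap because both satisfy $\Phi_1 \circ N = \cmap^2 \circ \Phi_1$ there and reduce to the original $\Phi_1$ after sufficiently many forward iterations of $N$; the uniqueness clause of Theorem \ref{th:BNF} (applied locally) then forces them to coincide. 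Hence the extended $\Phi_1, \Phi_2$ are well-defined analytic diffeomorphisms from $U_1, U_2$ (disjoint unions of the original neighborhoods and the pullback pieces) onto their images, satisfying (a)--(e) and containing every $x_k^{m,n} \in \cspace_1 \cup \cspace_2$ with $m,n \geq n_0$.
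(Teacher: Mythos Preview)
Your approach shares the core idea with the paper---extend $\Phi_i$ by the rule $\Phi_i := \cmap^{\mp 2j} \circ \Phi_i^{\text{orig}} \circ N^{\pm 2j}$---but there is a genuine gap in your first reduction. You assert that ``a compactness/continuity argument then produces $n_0$ so that for all $m,n \geq n_0$, every orbit point with $K \leq k \leq 2m-K$ \ldots\ already lies in the original $V_1 \cup V_2$.'' This is not a compactness/continuity statement. The convergence $x_k^{m,n} \to x_k^\infty$ that you invoke is only uniform on \emph{bounded} ranges of $k$, whereas the range $K \leq k \leq 2m-K$ has length growing with $m$; and a hyperbolic fixed point admits no forward-invariant open neighborhood, so an orbit that enters $V_1 \cup V_2$ near $x_K^\infty$ is not forced to remain there for $2m-2K$ further bounces. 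What you actually need here is a hyperbolicity estimate---for instance, that any point whose coding agrees with the $2$-periodic word $(12)^\infty$ for at least $K$ symbols on each side lies within $C\lambda^K$ of the $2$-periodic orbit. That is true and would close the gap, but it is a shadowing bound, not compactness.

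The paper avoids this issue entirely by reversing the order of your two steps. It first iterates the one-step extension to \emph{saturation}, obtaining domains $V_1,V_2$ that are closed under $\cmap_{12}^{\pm 1}$ and $\cmap_{21}^{\pm 1}$ wherever those maps land on scatterer $1$ or $2$ (the inclusions in~(\ref{extension})). With this in hand, a single genuine continuity fact suffices: for one fixed index $j$, the point $x_{2j}^{m,n}$ lies in $V_2$ once $m,n \geq n_0$. Closure under iteration then automatically puts every $x_k^{m,n}$ with $1 \leq k \leq 2m-1$ (and, by the same argument applied to $x_{2j}^{n,m}$, every point in the second half as well) inside $V_1 \cup V_2$, with no uniform-in-$m$ estimate required. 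This also renders your careful overlap and involution bookkeeping unnecessary: the extension is built one step at a time from the formulas $\Phi_2 \circ N = \cmap_{12} \circ \Phi_1$ and $\Phi_1 \circ N = \cmap_{21} \circ \Phi_2$, which manifestly propagate conditions (b)--(e) and agree with any previously defined values.
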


\begin{proof}
    Consider any $ (\xi,\eta) \in U_1 $. If $ (\cmap_{12} \circ \Phi_1)(\xi,\eta) $ is defined (that is, if $ F\parens{\Phi_1(\xi,\eta)} $ is defined and in $\cspace_2$), then we may define $ \Phi_2\parens{N(\xi,\eta)} = (F \circ \Phi_1)(\xi,\eta) $. Similarly, if $ (\cmap_{12}^{-1} \circ \Phi_1)(\xi,\eta) $ is defined, then we may define $ \Phi_2\parens{N^{-1}(\xi,\eta)} = (F^{-1} \circ \Phi_1)(\xi,\eta) $. In this manner, we may extend $U_2$ and $V_2$ so that $ V_2 \subseteq F_{12}(V_1) \cup F_{12}^{-1}(V_1) $.
    It can easily be proven that these definitions agree with the already-defined values of $\Phi_1\parens{N(\xi,\eta)}$ and $\Phi_1\parens{N^{-1}(\xi,\eta)}$ if $ N(\xi,\eta) \in U_1 $ or $ N^{-1}(\xi,\eta) \in U_1 $, and that all conditions (a)-(e) in Proposition \ref{prop:BNF} remain satisfied under this extension.
    By a similar process, we can extend $U_1$ and $V_1$ so that $ V_1 \subseteq \cmap_{21}(V_2) \cup \cmap_{21}^{-1}(V_2) $.
    We extend the sets $U_1$, $U_2$, $V_1$, and $V_2$ as much as possible in this manner, so that
    \begin{equation} \label{extension}
        V_1 \subseteq \cmap_{21}(V_2) \cup \cmap_{21}^{-1}(V_2) \textand V_2 \subseteq \cmap_{12}(V_1) \cup \cmap_{12}^{-1}(V_1).
    \end{equation}
    
    Since $ x_{2j}^\infty \rightarrow (2,0,0) $ as $ j \rightarrow \infty $ and $V_2$ is a neighborhood of $(2,0,0)$, we may take $j$ so that $ x_{2j}^\infty \in V_2 $. Since $ x_{2j}^{m,n} \rightarrow x_{2j}^\infty $ as $ m,n \rightarrow \infty $, there is some $ n_0 \in \N $ such that $ x_{2j}^{m,n} \in V_2 $ for all $ m,n \geq n_0 $.
    By repeated application of (\ref{extension}), it follows that $ x_k^{m,n} \in V_1 \cup V_2 $ for $ 1 \leq k \leq 2m-1 $. Similarly, $ x_{2j}^{n,m} \in V_2 $, so $ x_{2m+k}^{m,n} = x_k^{n,m} \in V_1 \cup V_2 $ for $ 1 \leq k \leq 2n-1 $.
    This includes all points in the orbit $\set{x_k^{m,n}}$ in $ \cspace_1 \cup \cspace_2 $, so the proof is complete.
\end{proof}

There is some $ \xi_\infty \neq 0 $ such that the Birkhoff coordinates of $ x_k^\infty $ for $ k \geq 1 $ is $ (\lambda^k \xi_\infty, 0) $. The Birkhoff coordinates of $ x_{-k}^\infty $ are then $ (0, \lambda^k \xi_\infty) $.
All of our results remain valid if we replace $\Phi_1$ and $\Phi_2$ with the maps $ \Phi_1 \circ (-\id) $ and $ \Phi_2 \circ (-\id) $, respectively, so we may assume without loss of generality that $ \xi_\infty > 0 $, and we do so.

Lemma \ref{lem:extension} implies that there is some neighborhood $U_-$ of $ (\xi_\infty, 0) $ on which the map $ \Phi_- := F_{13}^{-1} \circ \Phi_1 \circ N $ is defined and a neighborhood $U_+$ of $ (0, \xi_\infty) $ on which the map $ \Phi_+ := F_{13} \circ \Phi_1 \circ N^{-1} $ is defined.
It follows from these definitions and Proposition \ref{prop:BNF}(d) that
\begin{equation} \label{Phi-3-involution}
    \Phi_+ \circ I = I_0 \circ \Phi_-.
\end{equation}
By replacing $U_-$ and $U_+$ with smaller neighborhoods if necessary, we can assume that $\Phi_-$ and $\Phi_+$ are both diffeomorphisms onto some neighborhood $ V_3 \subseteq \cspace_3 $ of $x_0^\infty$.
This means points in $V_3$ will have two different Birkhoff coordinates. For example, the homoclinic point $x_0^0$ has Birkhoff coordinates $(\xi_\infty,0)$ and $(0,\xi_\infty)$.
We identify these points with each other via the following:

\begin{definition} \label{def:G}
    We define the \keyterm{gluing map} $ G: U_+ \rightarrow U_- $ by $ G := \Phi_-^{-1} \circ \Phi_+ $.
\end{definition}

Lemma \ref{lem:extension} immediately implies the following:

\begin{proposition} \label{prop:orbit-conj-N}
    For all sufficiently large $m$ and $n$, all points of the periodic orbit $\set{x_k^{m,n}}$ are contained in $ V := V_1 \cup V_2 \cup V_3 $.
\end{proposition}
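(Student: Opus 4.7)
The plan is to decompose the orbit $\{x_k^{m,n}\}_{k=0}^{2m+2n-1}$ according to which scatterer each point lies on and handle the three scatterers separately, using Lemma~\ref{lem:extension} for scatterers $1$ and $2$ and the convergence $x_k^{m,n}\to x_k^\infty$ for scatterer $3$.

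First I would read off from the coding $3(12)^{m-1}13(12)^{n-1}1$ exactly where each point of the orbit sits: the two indices $k=0$ and $k=2m$ correspond to collisions on scatterer $3$, while every other index $1\le k\le 2m-1$ and $2m+1\le k\le 2m+2n-1$ alternates between scatterers $1$ and $2$. Consequently the orbit splits into the two ``arcs'' $\{x_k^{m,n}:1\le k\le 2m-1\}\subset \cspace_1\cup\cspace_2$ and $\{x_{2m+k}^{m,n}:1\le k\le 2n-1\}\subset \cspace_1\cup\cspace_2$, together with the two scatterer-$3$ points $x_0^{m,n},x_{2m}^{m,n}$.

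For the two arcs, Lemma~\ref{lem:extension} supplies an $n_0\in\N$ such that once $m,n\ge n_0$, every point of $\{x_k^{m,n}\}$ lying in $\cspace_1\cup\cspace_2$ is contained in $V_1\cup V_2$; this handles everything except the two distinguished indices on scatterer $3$. For those, I would invoke the convergence stated just before Definition~\ref{def:G}: as $m,n\to\infty$, $x_0^{m,n}\to x_0^\infty$, and by the symmetry relation $x_{2m}^{m,n}=x_0^{n,m}$ from (\ref{interchange}) the point $x_{2m}^{m,n}$ also converges to $x_0^\infty$. Since $V_3$ is an open neighborhood of $x_0^\infty$, both points lie in $V_3$ once $m$ and $n$ are large enough. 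Enlarging $n_0$ to absorb this second threshold, all points of the orbit are in $V_1\cup V_2\cup V_3=V$.

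I do not anticipate a genuine obstacle here: the proposition is essentially a packaging statement combining Lemma~\ref{lem:extension} with the known limiting behavior of the cyclicity-$2$ orbits. The only point that requires a moment of care is that the convergence $x_{2m}^{m,n}\to x_0^\infty$ is a joint limit in $m$ and $n$ rather than a limit in a single variable, but this is exactly what the identity $x_{2m}^{m,n}=x_0^{n,m}$ combined with $x_0^{n,m}\to x_0^\infty$ gives, so no additional argument is needed.
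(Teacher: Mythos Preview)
Your proposal is correct and matches the paper's approach: the paper simply states that the proposition follows immediately from Lemma~\ref{lem:extension} (together with the construction of $V_3$ as a neighborhood of $x_0^\infty$), and your write-up is just the natural unpacking of that claim. The only small remark is that the convergence $x_0^{m,n}\to x_0^\infty$ you invoke is stated in the introduction rather than right before Definition~\ref{def:G}, but this does not affect the argument.
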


For $m$ and $n$ large enough so that Proposition \ref{prop:orbit-conj-N} holds, we define $ (\xi_{m,n}, \eta_{m,n}) := \Phi_-^{-1}(x_0^{m,n}) $, and for $ k \neq 0,2m $, $ (\xi_k^{m,n}, \eta_k^{m,n}) $ to be the Birkhoff coordinates of $x_k^{m,n}$.
When the values of $m$ and $n$ are clear from context, we will often denote $ (\xi_A, \eta_A) := (\xi_{m,n}, \eta_{m,n}) $ and $ (\xi_B, \eta_B) := (\xi_{n,m}, \eta_{n,m}) $. Hence, $ (\xi_A, \eta_A) $ is the unique fixed point of $ G \circ N^{2n} \circ G \circ N^{2m} $ and $ (\xi_B, \eta_B) $ is the unique fixed point of $ G \circ N^{2m} \circ G \circ N^{2n} $.
By (\ref{interchange}), $ (\xi_{n,m}, \eta_{n,m}) = \Phi_-^{-1}(x_{2m}^{m,n}) $, so we may interchange the subscripts $A$ and $B$ simply by interchanging $m$ and $n$.

We denote the Birkhoff energies of the two cycles of the orbit by $ h_{m,n} := h_A := \xi_A \eta_A $ and $ h_{n,m} := h_B := \xi_B \eta_B $. Then, $ \xi_k^{m,n} \eta_k^{m,n} = h_A $ for $ 1 \leq k \leq 2m-1 $ and $ \xi_k^{m,n} \eta_k^{m,n} = h_B $ for $ 2m+1 \leq k \leq 2m+2n-1 $.
By (\ref{involution-orbit}), it holds that
\begin{equation} \label{orbit-symmetry}
    N^{2m}(\xi_A,\eta_A) = (\eta_A,\xi_A) \textand N^{2n}(\xi_B,\eta_B) = (\eta_B,\xi_B),
\end{equation}
that is, $ G \circ I $ maps the points $(\xi_A,\eta_A)$ and $(\xi_B,\eta_B)$ to each other.
In fact, it follows from Definition \ref{def:G} and equation (\ref{Phi-3-involution}) that $ G \circ I $ is an involution, so if $ (s,r) = \Phi_-(\xi_A,\eta_A) $, then $ (s,-r) = \Phi_-(\xi_B,\eta_B) $.
The conjugacy between the periodic orbits of the collision map $\cmap$ and their dynamics in Birkhoff coordinates is illustrated in Figure \ref{fig:conj-orbit}.

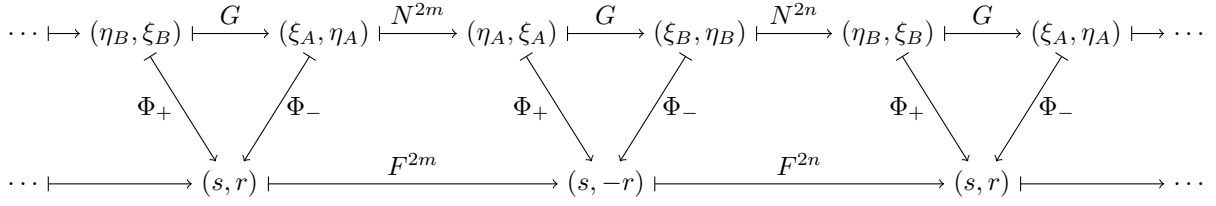
\begin{figure}[ht]
    \centering
    \begin{tikzpicture}
        \node (A1) at (-4,2) {$\hdots$};
        \node (A2) at (-2.5,2) {$(\eta_B,\xi_B)$};
        \node (A3) at (0,2) {$(\xi_A,\eta_A)$};
        \node (A4) at (2.5,2) {$(\eta_A,\xi_A)$};
        \node (A5) at (5,2) {$(\xi_B,\eta_B)$};
        \node (A6) at (7.5,2) {$(\eta_B,\xi_B)$};
        \node (A7) at (10,2) {$(\xi_A,\eta_A)$};
        \node (A8) at (11.5,2) {$\hdots$};
        \node (B1) at (-4,0) {$\hdots$};
        \node (B3) at (-1.25,0) {$(s,r)$};
        \node (B5) at (3.75,0) {$(s,-r)$};
        \node (B7) at (8.75,0) {$(s,r)$};
        \node (B8) at (11.5,0) {$\hdots$};
        
        \path[|->] (A1) edge (A2);
        \path[|->] (A2) edge node[above]{$G$} (A3);
        \path[|->] (A3) edge node[above]{$N^{2m}$} (A4);
        \path[|->] (A4) edge node[above]{$G$} (A5);
        \path[|->] (A5) edge node[above]{$N^{2n}$} (A6);
        \path[|->] (A6) edge node[above]{$G$} (A7);
        \path[|->] (A7) edge (A8);
        \path[|->] (B1) edge (B3);
        \path[|->] (B3) edge node[above]{$F^{2m}$} (B5);
        \path[|->] (B5) edge node[above]{$F^{2n}$} (B7);
        \path[|->] (A2) edge node[left]{$\Phi_+$} (B3);
        \path[|->] (A3) edge node[right]{$\Phi_-$} (B3);
        \path[|->] (A4) edge node[left]{$\Phi_+$} (B5);
        \path[|->] (A5) edge node[right]{$\Phi_-$} (B5);
        \path[|->] (A6) edge node[left]{$\Phi_+$} (B7);
        \path[|->] (A7) edge node[right]{$\Phi_-$} (B7);
        \path[|->] (B7) edge (B8);
    \end{tikzpicture}
    \caption{The conjugacy between a cyclicity-$2$ orbit of the collision map and the dynamics of the orbit in Birkhoff coordinates.}
    \label{fig:conj-orbit}
\end{figure}

For the remainder of this section, we consider $ (\xi_A,\eta_A), (\xi_B,\eta_B) \in U_- $ to be any points such that $ (\xi_B,\eta_B) = (G \circ I)(\xi_A,\eta_A) $, suspending the requirement that these be derived from a particular cyclicity-$2$ orbit.

\begin{lemma} \label{lem:coord}
    There is a local coordinate chart of $U_-$ near $(\xi_\infty,0)$ described by the coordinates $(\eta_A,\eta_B)$ and another local coordinate chart described by the coordinates $(h_A,h_B)$.
\end{lemma}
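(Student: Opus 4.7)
My plan is to apply the inverse function theorem to each of the proposed coordinate maps, $(\xi_A, \eta_A) \mapsto (\eta_A, \eta_B)$ and $(\xi_A, \eta_A) \mapsto (h_A, h_B)$, in turn. Both maps send the reference point $(\xi_\infty, 0)$ to the origin, so it suffices to verify that each Jacobian at $(\xi_\infty, 0)$ is invertible.

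First I would write $(\xi_B, \eta_B)$ explicitly as a function of $(\xi_A, \eta_A)$. Setting $G = (G_1, G_2)$, the identity $(\xi_B, \eta_B) = (G \circ I)(\xi_A, \eta_A) = G(\eta_A, \xi_A)$ together with $G(0, \xi_\infty) = (\xi_\infty, 0)$ lets one read off all relevant partial derivatives at $(\xi_\infty, 0)$ in terms of the partials of $G$ at $(0, \xi_\infty)$. A short direct calculation then shows that both Jacobian determinants simplify to a nonzero multiple of a single quantity: denoting by $\partial_v$ differentiation with respect to the second argument of $G$, the determinants equal $-\partial_v G_2(0, \xi_\infty)$ for the chart $(\eta_A, \eta_B)$ and $-\xi_\infty^2 \,\partial_v G_2(0, \xi_\infty)$ for the chart $(h_A, h_B)$. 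The two statements of the lemma therefore reduce to the single estimate $\partial_v G_2(0, \xi_\infty) \neq 0$.

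I would establish this nonvanishing geometrically. The curve $\{\eta = 0\}$ near $(\xi_\infty, 0)$ is the preimage under $\Phi_-$ of a local piece of the stable manifold $\tilde W^s$ of the $2$-periodic orbit inside $\cspace_3$, while $G(\{\xi = 0\}) = \Phi_-^{-1}(\Phi_+(\{\xi = 0\}))$ is the preimage of a local piece of the corresponding unstable manifold $\tilde W^u$. The tangent vectors to these two curves at $(\xi_\infty, 0)$ are $(1, 0)$ and $(\partial_v G_1, \partial_v G_2)(0, \xi_\infty)$ respectively, so their linear independence---equivalently $\partial_v G_2(0, \xi_\infty) \neq 0$---is precisely the transversality of $\tilde W^s$ and $\tilde W^u$ at the homoclinic point $x_0^\infty$.

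The main obstacle is thus justifying this transversality. This is a standard consequence of the uniform hyperbolicity of dispersing billiard systems satisfying the non-eclipse condition: the $2$-periodic orbit is hyperbolic by Proposition \ref{prop:BNF}, and the Sinai cone field for such billiards forces every intersection of stable and unstable manifolds of the hyperbolic set to be transverse. I would invoke this as a known property of the class $\mathcal{B}$, after which the two Jacobian computations above complete the proof.
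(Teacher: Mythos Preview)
Your argument is correct and rests on the same key fact as the paper's proof: both coordinate charts exist precisely because the stable and unstable manifolds of the $2$-periodic orbit intersect transversally at the homoclinic point $x_0^\infty$, which (as you note) is a standard feature of dispersing billiards under the non-eclipse condition. The paper identifies $\{\eta_A=0\}$ and $\{\eta_B=0\}$ with $\Phi_-^{-1}(W^s_3)$ and $\Phi_-^{-1}(W^u_3)$ and appeals to transversality of these foliations, while you compute the Jacobians directly in terms of the partials of $G$; these are equivalent packagings of the same observation.

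One structural difference worth flagging: for the second chart the paper does not differentiate $(\xi_A,\eta_A)\mapsto(h_A,h_B)$ directly but instead factors through the intermediate map $\Psi:(\eta_A,\eta_B)\mapsto(h_A,h_B)=(\eta_A\psi_A,\eta_B\psi_B)$, computing $D\Psi(0,0)=\xi_\infty I$. This map $\Psi$ and the explicit value of its derivative are reused downstream (in the definition of $v$ in Corollary~\ref{cor:q-function} and of $\tilde M=M\circ\Psi^{-1}$ in Proposition~\ref{prop:length-action}), so if you adopt your direct approach you will still want to record $\Psi$ and $D\Psi(0,0)$ separately.
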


\begin{proof}
    For any point $ (\xi,\eta) \in U_- $, we may write $ (\xi_A,\eta_A) = (\xi,\eta) $ and $ (\xi_B,\eta_B) = (G \circ I)(\xi,\eta) $.
    Let $ W_3^s = \cmap^{-1}(W_1^s) \cap V_3 $ and $ W_3^u = \cmap(W_1^u) \cap V_3 $ be the stable and unstable manifolds near the homoclinic point. Then, $ \Phi_-^{-1}(W_3^s) = \set{\eta = 0} = \set{\eta_A = 0} $ and $ \Phi_-^{-1}(W_3^u) = G\parens{\Phi_+^{-1}(W_3^u)} = G(\set{\xi=0}) = (G \circ I)(\set{\eta=0}) = \set{\eta_B = 0} $.
    The curves $W^s$ and $W^u$ are transverse, and therefore, so are the curves $ \set{\eta_A = 0} $ and $ \set{\eta_B = 0} $.
    The collections of curves $ \set{\eta_A = x} $ and $ \set{\eta_B = y} $ are both analytic foliations, so the curves $ \set{\eta_A = x} $ and $ \set{\eta_B = y} $ are transverse to each other for $(x,y)$ near $(0,0)$. Hence, there is a local coordinate chart described by $(\eta_A,\eta_B)$.
    
    With this coordinate chart, we may write $ \xi_A = \psi_A(\eta_A,\eta_B) $ and $ \xi_B = \psi_B(\eta_A,\eta_B) $ for analytic functions $\psi_A$ and $\psi_B$. Define the function $ \Psi(\eta_A,\eta_B) = \parens{\eta_A \psi_A(\eta_A,\eta_B), \eta_B \psi_B(\eta_A,\eta_B)} $. Then,
    \begin{equation} \label{Psi-p-h}
        \Psi(\eta_A,\eta_B) = (h_A,h_B).
    \end{equation}
    Since $ \psi_A(0,0) = \psi_B(0,0) = \xi_\infty $, we may compute
    \begin{equation} \label{Psi-deriv}
        D\Psi(0,0) = \xi_\infty I,
    \end{equation}
    where $I$ is the identity matrix. Hence, $\Psi$ is a local diffeomorphism near $(0,0)$, and the local coordinate chart described by coordinates $(\eta_A,\eta_B)$ gives rise to another local coordinate chart described by coordinates $(h_A,h_B)$.
\end{proof}

\begin{proposition} \label{prop:M-generating}
    There is some analytic generating function $M(\eta_A,\eta_B)$ defined in some neighborhood of $(0,0)$ such that $ M(0,0) = 0 $ and the gluing map $ G: (\eta_A,\xi_A) \mapsto (\xi_B,\eta_B) $ is given implicitly by
    \begin{equation} \label{M-def}
        \xi_A = \pderiv{M}{\eta_A} \textand \xi_B = \pderiv{M}{\eta_B}.
    \end{equation}
    Furthermore, $M$ is symmetric in the sense that $ M(\eta_A,\eta_B) = M(\eta_B,\eta_A) $.
\end{proposition}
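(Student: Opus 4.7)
The plan is to view Proposition \ref{prop:M-generating} as a standard symplectic/generating-function statement: since $G$ is area-preserving and $(\eta_A,\eta_B)$ is a local coordinate chart on $U_-$ by Lemma \ref{lem:coord}, the pair $(\xi_A,\xi_B)$ can be written as the gradient of an analytic potential $M$, and the $A\leftrightarrow B$ symmetry of the gluing construction then forces $M$ to be symmetric in its two arguments.

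First I would verify that $G=\Phi_-^{-1}\circ\Phi_+$ is area-preserving and orientation-preserving. Both $\Phi_1$ and $\Phi_2$ have these properties by Proposition \ref{prop:BNF}, and the collision map (and in particular $F_{13}$) preserves the standard area form $dr\wedge ds$ on the phase space, so each of $\Phi_-,\Phi_+$ inherits both properties, and hence so does $G$. Using the chart from Lemma \ref{lem:coord}, write $\xi_A=\psi_A(\eta_A,\eta_B)$ and $\xi_B=\psi_B(\eta_A,\eta_B)$. Parametrizing the source point of $G$ as $I(\xi_A,\eta_A)=(\eta_A,\xi_A)\in U_+$ and its image as $(\xi_B,\eta_B)\in U_-$, the area-preservation identity $d\eta_A\wedge d\xi_A = d\xi_B\wedge d\eta_B$ expands in the $(\eta_A,\eta_B)$ chart to
\[
\pderiv{\psi_A}{\eta_B}\,d\eta_A\wedge d\eta_B=\pderiv{\psi_B}{\eta_A}\,d\eta_A\wedge d\eta_B,
\]
which is precisely the closedness of the 1-form $\psi_A\,d\eta_A+\psi_B\,d\eta_B$. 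Poincar\'e's lemma on a simply connected neighborhood of $(0,0)$ then produces an analytic primitive $M$ satisfying (\ref{M-def}), with the remaining additive constant pinned down by $M(0,0)=0$.

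For the symmetry $M(\eta_A,\eta_B)=M(\eta_B,\eta_A)$, the key input is that $G\circ I$ is an involution on $U_-$, as noted just before Definition \ref{def:G}. Consequently the pair $((\xi_B,\eta_B),(\xi_A,\eta_A))$ is itself a valid configuration, and applying (\ref{M-def}) to it shows that the same $M$ also satisfies $(\partial M/\partial \eta_A)(\eta_B,\eta_A)=\xi_B$ and $(\partial M/\partial \eta_B)(\eta_B,\eta_A)=\xi_A$. Setting $\tilde M(x,y):=M(y,x)$, the chain rule gives $\partial_1\tilde M(\eta_A,\eta_B)=\xi_A$ and $\partial_2\tilde M(\eta_A,\eta_B)=\xi_B$, so $\tilde M$ satisfies the same two defining equations as $M$ and vanishes at $(0,0)$; the uniqueness step from the previous paragraph then forces $\tilde M=M$.

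The main obstacle is the first step: carefully checking that the area-preservation identity, translated through the involution $I$ that shifts between coordinates on $U_+$ and $U_-$, yields the correct sign so that closedness of $\psi_A\,d\eta_A+\psi_B\,d\eta_B$ drops out. Everything after that reduction is formal bookkeeping.
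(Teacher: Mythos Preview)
Your proposal is correct and follows essentially the same approach as the paper. The paper's proof is extremely terse---it simply asserts that the existence of $M$ is a consequence of Lemma~\ref{lem:coord} together with the fact that $G\circ I:(\xi_A,\eta_A)\mapsto(\xi_B,\eta_B)$ is area-preserving and orientation-reversing, and that symmetry follows because $G$ can equally be written as $(\eta_B,\xi_B)\mapsto(\xi_A,\eta_A)$---whereas you supply the explicit closedness computation and Poincar\'e-lemma step that the paper leaves implicit. One small correction: the involution property of $G\circ I$ is stated \emph{after} Definition~\ref{def:G} (following equation~(\ref{orbit-symmetry})), not before it.
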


\begin{proof}
    This is a consequence of Lemma \ref{lem:coord} and the fact that $ G \circ I: (\xi_A,\eta_A) \mapsto (\xi_B,\eta_B) $ is area-preserving and orientation-reversing.
    Symmetry of $M$ follows from the fact that the gluing map can also be realized as $ G: (\eta_B,\xi_B) \mapsto (\xi_A,\eta_A) $, and thus interchanging the subscripts $A$ and $B$ produces the same function $M$.
\end{proof}

The following is an immediate consequence:

\begin{corollary} \label{cor:q-function}
    Define the analytic function $ v := \pderiv{M}{\eta_A} \circ \Psi^{-1} $. Then,
    \begin{equation} \label{q-function}
        \xi_A = v(h_A,h_B) \textand \xi_B = v(h_B,h_A)
    \end{equation}
    for $(h_A,h_B)$ near $(0,0)$.
\end{corollary}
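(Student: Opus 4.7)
The plan is to unwind the definitions: the first identity $\xi_A = v(h_A,h_B)$ is essentially a restatement of how $v$ is constructed. Indeed, $\Psi^{-1}(h_A,h_B) = (\eta_A,\eta_B)$ by Lemma \ref{lem:coord} and equation (\ref{Psi-p-h}), and then $v(h_A,h_B) = \frac{\partial M}{\partial \eta_A}(\eta_A,\eta_B) = \xi_A$ by Proposition \ref{prop:M-generating}. So this half is immediate.

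The content is in the second identity $\xi_B = v(h_B,h_A)$. The key observation I would establish is that the map $\Psi$ commutes with the coordinate-swap $\sigma(x,y) = (y,x)$; equivalently, $\Psi^{-1}(h_B,h_A) = (\eta_B,\eta_A)$. To see this, note that from $\xi_A = \frac{\partial M}{\partial \eta_A}(\eta_A,\eta_B)$ and the symmetry $M(\eta_A,\eta_B) = M(\eta_B,\eta_A)$ in Proposition \ref{prop:M-generating}, we get $\frac{\partial M}{\partial \eta_A}(\eta_B,\eta_A) = \frac{\partial M}{\partial \eta_B}(\eta_A,\eta_B) = \xi_B$. In the notation of Lemma \ref{lem:coord}, this says $\psi_A(\eta_B,\eta_A) = \psi_B(\eta_A,\eta_B)$, and a parallel computation gives $\psi_B(\eta_B,\eta_A) = \psi_A(\eta_A,\eta_B)$. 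Plugging into the definition of $\Psi$ yields
\begin{equation*}
    \Psi(\eta_B,\eta_A) = \bigl(\eta_B \psi_A(\eta_B,\eta_A),\; \eta_A \psi_B(\eta_B,\eta_A)\bigr) = \bigl(\eta_B \psi_B(\eta_A,\eta_B),\; \eta_A \psi_A(\eta_A,\eta_B)\bigr) = (h_B,h_A),
\end{equation*}
so $\Psi \circ \sigma = \sigma \circ \Psi$ on a neighborhood of $(0,0)$.

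With this in hand, the second identity is a two-line computation:
\begin{equation*}
    v(h_B,h_A) = \tfrac{\partial M}{\partial \eta_A} \circ \Psi^{-1}(h_B,h_A) = \tfrac{\partial M}{\partial \eta_A}(\eta_B,\eta_A) = \tfrac{\partial M}{\partial \eta_B}(\eta_A,\eta_B) = \xi_B,
\end{equation*}
where the second equality uses $\Psi^{-1} \circ \sigma = \sigma \circ \Psi^{-1}$ and the third uses the symmetry of $M$.

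There is no real obstacle here; the statement is essentially a packaging of the symmetry already extracted in Proposition \ref{prop:M-generating}. The only conceptual point worth flagging is that the symmetry of $M$ transfers cleanly through the change of coordinates $\Psi$ because $\Psi$ itself is symmetric under swap, which is what makes it legitimate to express both $\xi_A$ and $\xi_B$ as evaluations of the \emph{same} analytic function $v$ at the arguments $(h_A,h_B)$ and $(h_B,h_A)$.
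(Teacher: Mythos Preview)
Your proof is correct and is essentially the same approach the paper takes; the paper simply states the corollary as ``an immediate consequence'' of Proposition~\ref{prop:M-generating} without spelling out the details, and what you have written is precisely the natural unpacking of that immediacy. In particular, your observation that $\Psi \circ \sigma = \sigma \circ \Psi$ (a direct consequence of the symmetry of $M$) is the only nontrivial ingredient, and it is exactly what the paper is implicitly using.
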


Note that the normal form $N$ and gluing map $G$ are uniquely determined by the system and two systems with the same normal form and gluing map are analytically conjugate to each other near the homoclinic orbit, so to prove Theorem \ref{th:conj}, it suffices to prove that two systems with the same length spectrum necessarily have the same normal form and gluing map.

\section{Asymptotic Triangular Power Series} \label{sec:series}

We are interested in asymptotic power series expressions of the form
\begin{equation} \label{asym-series}
    \sum_{p,q,i,j} c_{pq}^{ij} m^i n^j \parens{\lambda^{2m}}^p \parens{\lambda^{2n}}^q.
\end{equation}
In this section, we show that the Birkhoff energies $h_A$ and $h_B$ have this type of expansion. To do so, we write $ z_A := \xi_\infty^2 \lambda^{2m} $ and $ z_B := \xi_\infty^2 \lambda^{2n} $ and work with formal power series expressions of the form
\begin{equation} \label{formal-series}
    F(m,n,z_A,z_B) = \sum_{p,q,i,j} a_{pq}^{ij} m^i n^j z_A^p z_B^q.
\end{equation}
Power series similar to ours are used in \cite{DKL,KLZ}, but in two variables instead of four.

For such a power series $F$, we adopt the notation $ \coef{F}{pq}{ij} = a_{pq}^{ij} $ for the coefficient of $ m^i n^j z_A^p z_B^q $. We also define
$$ \coefp{F}{pq} = \sum_{i,j} a_{pq}^{ij} m^i n^j. $$
For any polynomial $P$ in the variables $m$ and $n$, we write $ \deg P = (r,s) $ if $P$ has degree $r$ in $m$ and degree $s$ in $n$. By $ \deg P \leq (r,s) $, we mean $P$ has degree less than or equal to $r$ in $m$ and degree less than or equal to $s$ in $n$.

\begin{definition} \label{def:tri}
    We say a formal power series $F$ is \keyterm{triangular} if for all $p$ and $q$, $\coefp{F}{pq}$ is a polynomial of finite degree in $m$ and $n$ with $ \deg \coefp{F}{pq} \leq (p,q) $, or equivalently, if $F$ can be written in the form
    \begin{equation} \label{triangular-def}
        F = \sum_{p=0}^\infty \sum_{q=0}^\infty \sum_{i=0}^p \sum_{j=0}^q a_{pq}^{ij} m^i n^j z_A^p z_B^q.
    \end{equation}
    Furthermore, if $ \deg \coefp{F}{pq} \leq \parens{\max\set{0,p-1}, q} $ (respectively $ \deg \coefp{F}{pq} \leq \parens{p, \max\set{0,q-1}} $) for all $p$ and $q$, then $F$ is \keyterm{strictly triangular in $z_A$} (respectively \keyterm{strictly triangular in $z_B$}).
    A formal power series will be called \keyterm{strictly triangular (in both)} if it is strictly triangular in both $z_A$ and $z_B$, or equivalently, if $ \deg \coefp{F}{pq} \leq \parens{\max\set{0,p-1}, \max\set{0,q-1}} $.
    
    For any $ \nu \in \N $, a formal power series $F$ is \keyterm{triangular up to degree $\nu$} if for all $p$ and $q$ with $ p + q \leq \nu $, $ \coefp{F}{pq} \leq (p,q) $.
    Similarly, $F$ is \keyterm{strictly triangular} in $z_A$ or $z_B$ \keyterm{up to degree $\nu$} if the respective condition $ \deg \coefp{F}{pq} \leq \parens{\max\set{0,p-1}, q} $ or $ \deg \coefp{F}{pq} \leq \parens{p, \max\set{0,q-1}} $ holds whenever $ p+q \leq \nu $, and \keyterm{strictly triangular (in both) up to degree $\nu$} if $ \deg \coefp{F}{pq} \leq \parens{\max\set{0,p-1}, \max\set{0,q-1}} $.
\end{definition}

In general, a triangular power series that is strictly triangular in $z_A$ takes the form
\begin{equation} \label{strict-tri-A}
    \sum_{p,q} \sum_{i=0}^{\max\set{0,p-1}} \sum_{j=0}^q a_{pq}^{ij} m^i n^j z_A^p z_B^q,
\end{equation}
one that is strictly triangular in $z_B$ takes the form
\begin{equation} \label{strict-tri-B}
    \sum_{p,q} \sum_{i=0}^p \sum_{j=0}^{\max\set{0,q-1}} a_{pq}^{ij} m^i n^j z_A^p z_B^q,
\end{equation}
and one that is strictly triangular in both takes the form
\begin{equation} \label{strict-tri}
    \sum_{p,q} \sum_{i=0}^{\max\set{0,p-1}} \sum_{j=0}^{\max\set{0,q-1}} a_{pq}^{ij} m^i n^j z_A^p z_B^q.
\end{equation}
A power series that is triangular up to degree $\nu$ takes the form
\begin{equation} \label{tri-norm}
    \sum_{p+q \leq \nu} \sum_{i=0}^p \sum_{j=0}^q a_{pq}^{ij} m^i n^j z_A^p z_B^q + R_{\nu+1},
\end{equation}
where the remainder term $R_{\nu+1}$ is a power series whose terms all have degree at least $\nu+1$ in the variables $z_A$ and $z_B$. Similar expressions with different bounds for $i$ and $j$ are valid for power series that are strictly triangular in $z_A$, $z_B$, or both up to degree $\nu$.

In section \ref{sec:recover}, we will often find it useful to express a triangular power series in the form
\begin{equation} \label{tri-series-alt}
    F = \sum_{p=0}^\infty \sum_{q=0}^\infty \sum_{k=0}^\infty \sum_{\ell=0}^\infty a_{p+k,q+\ell}^{k\ell} m^k n^\ell z_A^{p+k} z_B^{q+\ell}
\end{equation}
rather than in the form (\ref{triangular-def}).

The following are properties of triangular power series:
\begin{enumerate}[label = (T\arabic*)]
    \item A linear combination of power series that are triangular (respectively, strictly triangular in $z_A$, $z_B$, or both) up to degree $\nu$ is triangular (respectively, strictly triangular in $z_A$, $z_B$, or both) up to degree $\nu$.
    \item A product of power series that are triangular (respectively, strictly triangular in $z_A$, $z_B$, or both) up to degree $\nu$ is triangular (respectively, strictly triangular in $z_A$, $z_B$, or both) up to degree $\nu$.
    \item An analytic function (applied as a composition of formal power series) of a power series that is triangular (respectively, strictly triangular in $z_A$, $z_B$, or both) up to degree $\nu$ is triangular (respectively, strictly triangular in $z_A$, $z_B$, or both) up to degree $\nu$.
    \item If $F$ is strictly triangular in $z_A$ up to degree $\nu$ and is a multiple of $z_A$, then $\frac{F}{z_A}$ is triangular up to degree $\nu-1$. Furthermore, if $F$ is strictly triangular in $z_B$ up to degree $\nu$, then so is $\frac{F}{z_A}$ up to degree $\nu-1$.
\end{enumerate}
Properties (T1), (T2), and (T3) can be proven directly from the definitions. Property (T4) follows from the fact that a power series $F$ that is strictly triangular in $z_A$ up to degree $\nu$ and is a multiple of $z_A$ can be written in the form
$$ F = \sum_{p=1}^\nu \sum_{q=0}^{\nu-p} \sum_{i=0}^{p-1} \sum_{j=0}^q a_{pq}^{ij} m^i n^j z_A^p z_B^q + R_{\nu+1}. $$
Note that all of the above statements remain valid if all of the restrictions ``up to degree $\nu$'' and ``up to degree $\nu-1$'' are removed.

We devote the remainder of this section to proving the following:

\begin{proposition} \label{prop:h-triangular}
    The Birkhoff energy $h_A$ can be expressed as a strictly triangular formal power series that is a multiple of $z_A$,
    \begin{equation} \label{hA-triangular}
        h_A = \sum_{p=1}^\infty \sum_{q=0}^\infty \sum_{i=0}^{p-1} \sum_{j=0}^{\max\set{0,q-1}} \hc{pq}{ij} m^i n^j z_A^p z_B^q,
    \end{equation}
    where $ z_A = \xi_\infty^2 \lambda^{2m} $ and $ z_B = \xi_\infty^2 \lambda^{2n} $.
    Furthermore, $h_B$ can also be expressed as a strictly triangular formal power series that is a multiple of $z_B$ with
    \begin{equation} \label{hB-triangular}
        h_B = \sum_{p=0}^\infty \sum_{q=1}^\infty \sum_{i=0}^{\max\set{0,p-1}} \sum_{j=0}^{q-1} \hc{qp}{ji} m^i n^j z_A^p z_B^q.
    \end{equation}
\end{proposition}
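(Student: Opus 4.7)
The plan is to extract a coupled functional equation for $(h_A,h_B)$ from the normal-form dynamics, then solve it by fixed-point iteration within the exact class described in (\ref{hA-triangular})--(\ref{hB-triangular}). Because $N$ preserves Birkhoff energy, a direct induction on (\ref{N-def}) gives $N^{2m}(\xi,\eta) = (\mu(\xi\eta)^{2m}\xi,\mu(\xi\eta)^{-2m}\eta)$. Comparing with (\ref{orbit-symmetry}) yields $\eta_A = \mu(h_A)^{2m}\xi_A$, and substituting $\xi_A = v(h_A,h_B)$ from Corollary \ref{cor:q-function} gives the pair
\begin{equation*}
h_A = \mu(h_A)^{2m} v(h_A,h_B)^2 \textand h_B = \mu(h_B)^{2n} v(h_B,h_A)^2.
\end{equation*}
Since $\mu(0)=\lambda$, the function $\sigma(h) := \log\parens{\mu(h)/\lambda}$ is analytic with $\sigma(0)=0$, and $\mu(h)^{2m} = \lambda^{2m}e^{2m\sigma(h)}$, so writing $z_A = \xi_\infty^2\lambda^{2m}$ and $z_B = \xi_\infty^2\lambda^{2n}$ the equations become
\begin{equation*}
h_A = z_A\cdot\xi_\infty^{-2}v(h_A,h_B)^2\,e^{2m\sigma(h_A)} \textand h_B = z_B\cdot\xi_\infty^{-2}v(h_B,h_A)^2\,e^{2n\sigma(h_B)}.
\end{equation*}

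Let $\mathcal{A}$ denote the class of formal series of the exact shape (\ref{hA-triangular})---strictly triangular in both variables and a multiple of $z_A$---and let $\mathcal{B}$ denote the analogous class with the roles of $z_A,z_B$ swapped. I claim that the map on formal series sending $(h_A,h_B)$ to the pair of right-hand sides above carries $\mathcal{A}\times\mathcal{B}$ into itself. Granting this, starting the iteration from $(0,0)$ keeps every iterate in $\mathcal{A}\times\mathcal{B}$, and because each iterate acquires one additional order of $(z_A,z_B)$ of agreement with the true solution (thanks to the explicit prefactors $z_A,z_B$), the iterates converge in the formal-series topology to the unique formal fixed point. That fixed point must equal $(h_A,h_B)$, yielding (\ref{hA-triangular}); (\ref{hB-triangular}) then follows immediately from the symmetry of the equations under the interchange $(m,z_A,h_A)\leftrightarrow(n,z_B,h_B)$.

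The core verification uses (T1)--(T4) as follows for the $A$-equation, assuming $(h_A,h_B)\in\mathcal{A}\times\mathcal{B}$: (i) $\xi_\infty^{-2}v(h_A,h_B)^2$ is strictly triangular in both by (T3); (ii) $\sigma(h_A)$ is strictly triangular in both and a multiple of $z_A$, since $\sigma(0)=0$; (iii) multiplying by $2m$ raises the degree-in-$m$ bound from $\max\set{0,p-1}$ to $p$, so $2m\sigma(h_A)$ is strictly triangular in $z_B$ but no longer in $z_A$, while remaining a multiple of $z_A$; (iv) by (T3), $e^{2m\sigma(h_A)}$ is strictly triangular in $z_B$; (v) by (T2), the product $F:=\xi_\infty^{-2}v(h_A,h_B)^2\,e^{2m\sigma(h_A)}$ is strictly triangular in $z_B$; (vi) multiplying by $z_A$ shifts indices so that the coefficient of $z_A^p z_B^q$ in $z_A F$, for $p\geq 1$, equals $\coefp{F}{p-1,q}$ with degree bound $(p-1,\max\set{0,q-1}) = (\max\set{0,p-1},\max\set{0,q-1})$, placing $z_A F$ in $\mathcal{A}$.

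The main obstacle is precisely steps (iii)--(vi): strict triangularity in $z_A$ is destroyed the moment the factor $2m$ hits $\sigma(h_A)$, and consequently $e^{2m\sigma(h_A)}$ is only strictly triangular in $z_B$. What saves the argument is the explicit prefactor $z_A$, whose multiplication restores exactly the missing unit of $z_A$-degree so that the combined right-hand side is again strictly triangular in both variables. This is why the proposition must be phrased with the ``multiple of $z_A$'' refinement rather than as a bare triangularity statement, and it is the reason the iteration must be organized around equations of the form $h_A = z_A\cdot(\cdots)$ in the first place.
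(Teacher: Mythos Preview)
Your proposal is correct and follows essentially the same approach as the paper: both derive the same implicit equation $h_A = z_A \cdot (1+u(h_A,h_B))\,(\mu(h_A)/\lambda)^{2m}$ (your $e^{2m\sigma(h_A)}$ is literally the paper's $(1+\delta(h_A))^{2m}$), and both hinge on the observation that the factor carrying the $m$-dependence is merely strictly triangular in $z_B$, with the explicit $z_A$ prefactor restoring strict triangularity in $z_A$. The only difference is packaging: the paper runs an induction on the total degree $\nu$ and handles $(1+\delta(h_A))^{2m}$ via the binomial expansion together with (T4), whereas you phrase the argument as a contraction/fixed-point iteration on $\mathcal{A}\times\mathcal{B}$ and invoke (T3) directly on the exponential; these are equivalent organizations of the same computation.
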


Note that the coefficients in the power series for $h_A$ and $h_B$ are the same, but with the variable $m$ swapped with $n$ and the variable $z_A$ swapped with $z_B$.

\begin{remark}
    Although $h_A$ and $h_B$ are analytic functions of $m$, $n$, $z_A$, and $z_B$, this does not necessarily imply that the power series converge for large $m$ and $n$ after substituting $ z_A = \xi_\infty^2 \lambda^{2m} $ and $ z_B = \xi_\infty^2 \lambda^{2n} $. We do not address the problem of convergence, but we do prove the asymptotic approximations
    \begin{equation} \label{r-s-approx-h}
        h_A = \sum_{p=1}^r \sum_{q=0}^s \sum_{i,j} \hc{pq}{ij} m^i n^j z_A^p z_B^q + o(z_A^r z_B^s)
    \end{equation}
    for large $m$ and $n$ and similar approximations for $h_B$.
    These will be sufficient for our purposes.
\end{remark}

\begin{lemma} \label{lem:h-u-implicit}
    Define the analytic function
    \begin{equation} \label{u-def}
        u(h_A,h_B) := \frac{v(h_A,h_B)^2}{\xi_\infty^2} - 1,
    \end{equation}
    where $v$ is as defined in Corollary \ref{cor:q-function}.
    Then, the Birkhoff energies satisfy
    \begin{equation} \label{hA-implicit}
        h_A = \parens{1 + u(h_A,h_B)} \parens{1 + \delta(h_A)}^{2m} z_A
    \end{equation}
    and
    \begin{equation} \label{hB-implicit}
        h_B = \parens{1 + u(h_B,h_A)} \parens{1 + \delta(h_B)}^{2n} z_B.
    \end{equation}
\end{lemma}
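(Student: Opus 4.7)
The plan is to derive both identities as direct consequences of the explicit form (\ref{N-def}) of the normal form together with the orbit-symmetry identity (\ref{orbit-symmetry}). First, introduce (or recall) the analytic function $\delta$ via $\mu(h) = \lambda(1 + \delta(h))$, so that $\delta(0) = 0$ since $\mu(0) = \lambda$. Because the Birkhoff energy $\xi\eta$ is preserved under $N$, iterating (\ref{N-def}) gives
\begin{equation*}
    N^k(\xi,\eta) = \parens{\mu(\xi\eta)^k \xi,\, \eta/\mu(\xi\eta)^k}
\end{equation*}
for every integer $k$.

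Next, I would specialize to $(\xi_A, \eta_A)$. The first identity in (\ref{orbit-symmetry}) says $N^{2m}(\xi_A, \eta_A) = (\eta_A, \xi_A)$, so comparing the first coordinate yields $\mu(h_A)^{2m} \xi_A = \eta_A$, and multiplying by $\xi_A$ gives $\mu(h_A)^{2m} \xi_A^2 = h_A$. Substituting $\mu(h_A)^{2m} = \lambda^{2m}(1+\delta(h_A))^{2m}$ and applying Corollary \ref{cor:q-function} together with the definition (\ref{u-def}) of $u$ gives $\xi_A^2 = v(h_A, h_B)^2 = \xi_\infty^2(1 + u(h_A, h_B))$. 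Combining these and recalling $z_A = \xi_\infty^2 \lambda^{2m}$ yields
\begin{equation*}
    h_A = (1 + u(h_A, h_B))(1+\delta(h_A))^{2m}\, z_A,
\end{equation*}
which is (\ref{hA-implicit}).

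The identity (\ref{hB-implicit}) follows by the identical argument applied to $(\xi_B, \eta_B)$, using the second identity in (\ref{orbit-symmetry}) in place of the first, the relation $\xi_B = v(h_B, h_A)$ from Corollary \ref{cor:q-function}, and the definition of $u$ with the roles of $h_A$ and $h_B$ interchanged. There is no substantive obstacle here: the whole lemma is a short algebraic repackaging of the fact that in Birkhoff coordinates the dynamics over the $2m$ steps from $(\xi_A,\eta_A)$ to $(\eta_A,\xi_A)$ reduces to the single scalar equation $\mu(h_A)^{2m} \xi_A^2 = h_A$, and symmetrically for the $2n$ steps along the other cycle. The only mild care needed is to verify that $\delta$ is well-defined and analytic near $0$, which is immediate since $\mu$ is analytic and $\mu(0) = \lambda \ne 0$.
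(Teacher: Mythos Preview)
Your proof is correct and follows essentially the same approach as the paper: both use the orbit-symmetry identity (\ref{orbit-symmetry}) to obtain $\eta_A = \mu(h_A)^{2m}\xi_A$, multiply by $\xi_A$, and then substitute $\xi_A = v(h_A,h_B)$ from Corollary \ref{cor:q-function} together with $\mu(h) = \lambda(1+\delta(h))$ and $z_A = \xi_\infty^2\lambda^{2m}$. Your version is slightly more explicit (writing out $N^k$ and noting the analyticity of $\delta$), but the argument is the same.
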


\begin{proof}
    By (\ref{orbit-symmetry}), $ \eta_A = \xi_A \mu(h_A)^{2m} $, so we have
    $$ h_A = \xi_A \eta_A = \xi_A^2 \mu(h_A)^{2m} = v(h_A,h_B)^2 \parens{1 + \delta(h_A)}^{2m} \lambda^{2m}, $$
    from which (\ref{hA-implicit}) follows. The identity (\ref{hB-implicit}) can be proven similarly.
\end{proof}

The problem of solving the equations (\ref{hA-implicit}) and (\ref{hB-implicit}) for the power series expansions of $h_A$ and $h_B$ in terms of $z_A$ and $z_B$ is a form of multivariable Lagrange inversion.
Explicit formulas for the power series in multivariable Lagrange inversion are known (see, for example, \cite{Gessel}). However, these formulas are complicated and we do not need to completely recover these power series, so we instead work with the equations (\ref{hA-implicit}) and (\ref{hB-implicit}) directly through an inductive process. Specifically, we substitute an order $\nu$ approximation of $h_A$ and $h_B$ into (\ref{hA-implicit}) to produce an order $\nu+1$ approximation of $h_A$.

\begin{proof}[Proof of Proposition \ref{prop:h-triangular}]
    It will be proven by induction on $\nu$ that $h_A$ and $h_B$ are strictly triangular up to degree $\nu$, that is,
    \begin{equation} \label{h-triangular-pf-1}
        h_A = \sum_{p=1}^\nu \sum_{q=0}^{p-\nu} \sum_{i=0}^{p-1} \sum_{j=0}^{\max\set{0,q-1}} \hc{pq}{ij} m^i n^j z_A^p z_B^q + o\parens{\norm{(z_A,z_B)}^\nu}
    \end{equation}
    and
    \begin{equation} \label{h-triangular-pf-2}
        h_B = \sum_{p=1}^\nu \sum_{q=0}^{p-\nu} \sum_{i=0}^{p-1} \sum_{j=0}^{\max\set{0,q-1}} \hc{pq}{ij} n^i m^j z_B^p z_A^q + o\parens{\norm{(z_A,z_B)}^\nu}
    \end{equation}
    where the $o$ notation describes the behavior as $ m,n \rightarrow \infty $.
    
    First notice that since $ h_A \rightarrow 0 $ and $ h_B \rightarrow 0 $ as $ m,n \rightarrow \infty $, we have from (\ref{hA-implicit}) the first order approximation
    $$ h_A = z_A + o\parens{\norm{(z_A,z_B)}}. $$
    This establishes (\ref{h-triangular-pf-1}) for $\nu=1$.
    The statement (\ref{h-triangular-pf-2}) is similarly true for $\nu=1$.
    
    Now assume that (\ref{h-triangular-pf-1}) and (\ref{h-triangular-pf-2}) are true for some $ \nu \geq 1 $. To prove that the statement holds for $\nu+1$, it suffices to show that for all nonnegative integers $p$ and $q$ with $ p+q = \nu $, $ \deg \coefp{h_A}{p+1,q} \leq \parens{p, \max\set{0,q-1}} $. The exclusion of the coefficient $\coefp{h_A}{0,\nu+1}$ is valid since the expression (\ref{hA-implicit}) implies $h_A$ must be a multiple of $z_A$.
    
    Let $p$ and $q$ be nonnegative integers such that $ p + q = \nu $. By (\ref{hA-implicit}), we have
    \begin{equation} \label{h-triangular-pf-3}
        \coefp{h_A}{p+1,q} = \sum_{r=0}^p \sum_{s=0}^q \coefp{1 + u(h_A,h_B)}{rs} \coefp{\parens{1 + \delta(h_A)}^{2m}}{p-r,q-s}.
    \end{equation}
    Suppose that $ 0 \leq r \leq p $ and $ 0 \leq s \leq q $. If $ (r,s) \neq (0,0) $, the coefficient of $ 1 + u(h_A,h_B) $ can be written as the finite sum
    $$ \coefp{1 + u(h_A,h_B)}{rs} = \sum_{i+j \leq r+s} u_{ij} \coefp{h_A^i h_B^j}{rs}, $$
    where
    $$ u(h_A,h_B) = \sum_{i,j} u_{ij} h_A^i h_B^j $$
    is the power series expansion of $u$.
    By the inductive hypothesis, $h_A$ and $h_B$ are both strictly triangular up to degree $\nu$, so by property (T2), each $ h_A^i h_B^j $ is also strictly triangular up to degree $\nu$ and since $ r+s \leq \nu $,
    $$ \deg \coefp{h_A^i h_B^j}{rs} \leq \parens{\max\set{0,r-1}, \max\set{0,s-1}}, $$
    and as a linear combination of these polynomials,
    \begin{equation} \label{h-triangular-pf-4}
        \deg \coefp{1 + u(h_A,h_B)}{rs} \leq \parens{\max\set{0,r-1}, \max\set{0,s-1}}.
    \end{equation}
    Since $ \coefp{1 + u(h_A,h_B)}{0,0} = 1 $, (\ref{h-triangular-pf-4}) clearly also holds when $ (r,s) = (0,0) $.
    
    We now deal with the other factor in (\ref{h-triangular-pf-3}). We first expand
    $$ \coefp{\parens{1 + \delta(h_A)}^{2m}}{rs} = \sum_{i=1}^r \binom{2m}{i} \coefp{\delta(h_A)^i}{rs} = \sum_{i=0}^r \binom{2m}{i} \coefp{\parens{\frac{\delta(h_A)}{z_A}}^i}{r-i,s}. $$
    Here the upper bound of $r$ is due to the fact that $\delta(h_A)^i$ is a multiple of $z_A^i$, and thus its coefficient in front of $ z_A^r z_A^s $ must be zero whenever $ i > r $.
    By property (T3) and the inductive hypothesis, $\delta(h_A)$ is strictly triangular up to degree $\nu$. Also, $\delta(h_A)$ is a multiple of $z_A$, so by property (T4), $\frac{\delta(h_A)}{z_A}$ is triangular in $z_A$ and strictly triangular in $z_B$ up to degree $\nu-1$. By property (T2), so is its power $ \parens{\frac{\delta(h_A)}{z_A}}^i $, which implies $ \deg \coefp{\parens{\frac{\delta(h_A)}{z_A}}^i}{r-i,s} \leq \parens{r-i, \max\set{0,s-1}} $. Also, $ \deg \binom{2m}{i} = (i,0) $, so for all $i$,
    $$ \deg \binom{2m}{i} \coefp{\delta(h_A)^i}{rs} = \deg \binom{2m}{r} + \deg \coefp{\parens{\frac{\delta(h_A)}{z_A}}^i}{r-i,s} \leq \parens{r, \max\set{0,s-1}}. $$
    Hence,
    \begin{equation} \label{h-triangular-pf-5}
        \deg \coefp{\parens{1 + \delta(h_A)}^{2m}}{rs} \leq \parens{r, \max\set{0,s-1}}.
    \end{equation}
    It can be easily verified that this also holds for $ r = 0 $, since in this case $ \coefp{\parens{1 + \delta(h_A)}^{2m}}{rs} $ is equal to $1$ when $ s = 0 $ and $0$ when $ s \geq 1 $.
    
    Applying (\ref{h-triangular-pf-4}) and (\ref{h-triangular-pf-5}) to (\ref{h-triangular-pf-3}) yields $ \deg \coefp{h_A}{p+1,q} \leq \parens{p, \max\set{0,q-1}} $, completing the proof of the inductive step for $h_A$.
    The fact that $ \deg \coefp{h_B}{p,q+1} \leq \parens{\max\set{0,p-1}, q} $ follows by replacing $m$ with $n$ and $z_A$ with $z_B$ in the above proof.
\end{proof}

\section{Computing Perimeters of Periodic Orbits} \label{sec:length}

In this section, we derive a formula for the length spectral data in terms of the dynamics in Birkhoff coordinates by considering the perimeter of a cyclicity-$2$ periodic orbit as a symplectic action and computing this action.

For $ i,j \in \set{1,2,3} $, we define $ L_{ij}(s,s') $ to be the distance between the point $\gamma_i(s)$ on scatterer $i$ and $\gamma_j(s')$ on scatterer $j$.
The following is well-known (see, for example, Section 9.2 of \cite{KH}):

\begin{lemma} \label{lem:L-generating}
    The function $L_{ij}$ is a generating function in the sense that for $ (s,r) \in \cspace_i $ and $ (s',r') \in \cspace_j $, $ (s',r') = \cmap(s,r) $ if and only if $ r = -\pderiv{L}{s}(s,s') $ and $ r' = \pderiv{L}{s'}(s,s') $.
\end{lemma}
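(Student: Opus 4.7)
The plan is to prove both directions of the equivalence by direct differentiation of $L_{ij}(s,s') = \norm{\gamma_j(s') - \gamma_i(s)}$, using only the arc-length parametrization and the elastic reflection law. Setting $\hat{v}(s,s')$ to be the unit vector pointing from $\gamma_i(s)$ to $\gamma_j(s')$, a one-line calculation gives
\begin{equation*}
    \pderiv{L_{ij}}{s}(s,s') = -\gamma_i'(s) \cdot \hat{v}(s,s'), \qquad \pderiv{L_{ij}}{s'}(s,s') = \gamma_j'(s') \cdot \hat{v}(s,s'),
\end{equation*}
and the arc-length assumption makes $\gamma_i'(s)$ and $\gamma_j'(s')$ the counter-clockwise oriented unit tangent vectors to the scatterer boundaries at the respective collision points.

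For the forward direction, I would assume $(s',r') = \cmap(s,r)$. The outgoing trajectory leaving $\gamma_i(s)$ then has unit direction $\hat{v}(s,s')$, and with the sign convention that $\varphi$ is the signed angle from the outward normal (so that $r = \sin\varphi$ equals the component of the outgoing velocity along $\gamma_i'(s)$), one reads off $r = \gamma_i'(s) \cdot \hat{v}$, which by the first identity above is $-\pderiv{L_{ij}}{s}(s,s')$. At the arrival point $\gamma_j(s')$ the incoming unit velocity is still $\hat{v}$; the elastic reflection law preserves the tangential component of velocity, so the post-collision tangential component is $\gamma_j'(s') \cdot \hat{v} = \sin\varphi' = r'$, which equals $\pderiv{L_{ij}}{s'}(s,s')$.

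For the converse, given $r = -\pderiv{L_{ij}}{s}(s,s')$ and $r' = \pderiv{L_{ij}}{s'}(s,s')$, the first equation pins the tangential component of a unit vector at $\gamma_i(s)$ to equal $r$; requiring that vector to point away from $D_i$ determines it uniquely and identifies it with the outgoing direction associated to $(s,r)$. The straight-line trajectory in this direction reaches $\gamma_j(s')$ (this is the geometric content of $\hat{v}$ appearing in the derivative formula), and the second equation together with the reflection law at $\gamma_j(s')$ forces the post-collision data there to be exactly $(s',r')$, giving $\cmap(s,r) = (s',r')$.

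The main bookkeeping hurdle is consistently pinning down the sign convention so that $r = \sin\varphi$ matches $\gamma_i'(s) \cdot \hat{v}$ with the correct sign; once the counter-clockwise orientation of $\gamma_i$ and the meaning of $\varphi$ as a signed angle from the outward normal are fixed, a quick check against the time-reversing involution \eqref{involution} confirms that the signs are consistent, and the lemma reduces to the two derivative identities above combined with the elementary reflection law. This is why the result is cited from \cite{KH} rather than proven in detail.
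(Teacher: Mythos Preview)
The paper does not give its own proof of this lemma; it simply states the result as well-known and cites Section~9.2 of \cite{KH}. Your argument is precisely the standard direct computation one finds in such references: differentiate $L_{ij}(s,s') = \norm{\gamma_j(s')-\gamma_i(s)}$ to obtain the tangential components of the unit flight direction $\hat v$, and then identify these with $\sin\varphi$ and $\sin\varphi'$ via the reflection law. The forward direction is clean as written.

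One small point worth tightening in the converse: from $r = -\partial L_{ij}/\partial s$ you correctly conclude that the outgoing unit direction at $\gamma_i(s)$ coincides with $\hat v(s,s')$, so the free flight passes through $\gamma_j(s')$. What this does \emph{not} by itself guarantee is that $\gamma_j(s')$ is the \emph{first} boundary intersection along that ray, which is what $\cmap(s,r)=(s',r')$ literally asserts. In the paper's setting this is handled implicitly by working on the domain $\cspace_i \cap \cmap^{-1}(\cspace_j)$ (so the hypothesis already places $(s,r)$ among points whose next collision is on scatterer $j$), and the non-eclipse condition rules out intervening scatterers for the relevant pairs. You should either state this domain restriction explicitly or note that the equivalence is understood locally, as is standard when one speaks of $L_{ij}$ as a generating function for the twist map $\cmap_{ij}$. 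With that caveat, your proof is correct and matches what the cited reference provides.
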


The next three lemmas and Proposition \ref{prop:length-action} are very similar to results that appear in \cite{KLZ}.

\begin{lemma} \label{lem:Hamiltonian}
    Define a Hamiltonian $ H: U_1 \cup U_2 \rightarrow \R $ by
    \begin{equation} \label{eq:Hamiltoniandef}
        H(\xi,\eta) = \int_0^{\xi\eta} \log \mu(\tau) d\tau.
    \end{equation}
    Then, the Hamiltonian flow of $H$ is exactly $N^t$, defined by
    \begin{equation} \label{fractional-N}
        N^t(\xi,\eta) = \parens{\mu(\xi\eta)^t \xi, \frac{\eta}{\mu(\xi\eta)^t}}.
    \end{equation}.
\end{lemma}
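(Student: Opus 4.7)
The proof is essentially a direct verification that $N^t$ satisfies Hamilton's equations for the Hamiltonian $H$. The plan is the following.

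First I would observe the fundamental invariance: along any orbit of $N^t$, the product $\xi(t)\eta(t) = \mu(\xi\eta)^t \xi \cdot \eta/\mu(\xi\eta)^t = \xi\eta$ is conserved. In particular, this shows that the formula (\ref{fractional-N}) is consistent (the quantity $\mu(\xi\eta)$ that appears in the exponent equals $\mu(\xi(t)\eta(t))$ at all times), and that $N^t$ defines a one-parameter group of diffeomorphisms with $N^0 = \mathrm{id}$ and $N^{t+s} = N^t \circ N^s$. I would also remark that $N^1 = N$, so the flow at integer time reproduces the Birkhoff normal form.

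Next I would compute the partial derivatives of $H$ using the fundamental theorem of calculus and the chain rule: since $H(\xi,\eta) = \int_0^{\xi\eta} \log\mu(\tau)\,d\tau$, one has
\begin{equation*}
\pderiv{H}{\xi} = \eta \log\mu(\xi\eta) \textand \pderiv{H}{\eta} = \xi \log\mu(\xi\eta).
\end{equation*}
I would then differentiate the curve $(\xi(t),\eta(t)) = N^t(\xi_0,\eta_0)$ with respect to $t$. Using $\xi(t)\eta(t) = \xi_0\eta_0$, one obtains
\begin{equation*}
\deriv{\xi}{t} = \xi_0 \mu(\xi_0\eta_0)^t \log\mu(\xi_0\eta_0) = \xi(t) \log\mu\bigl(\xi(t)\eta(t)\bigr) = \pderiv{H}{\eta}(\xi(t),\eta(t)),
\end{equation*}
and similarly
\begin{equation*}
\deriv{\eta}{t} = -\eta(t) \log\mu\bigl(\xi(t)\eta(t)\bigr) = -\pderiv{H}{\xi}(\xi(t),\eta(t)).
\end{equation*}
This is exactly Hamilton's equation for $H$, so $N^t$ is the Hamiltonian flow of $H$. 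By uniqueness of solutions to ODEs with analytic right-hand side, no other flow satisfies these equations with the same initial conditions, so $N^t$ is \emph{the} Hamiltonian flow of $H$.

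There is no real obstacle here: the lemma is a routine verification, and the only subtlety is noting that $\xi\eta$ is conserved so that $\mu(\xi\eta)^t$ is genuinely just an exponential in $t$ along each orbit, which is what makes the derivative computation clean.
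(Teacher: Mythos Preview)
Your proof is correct and follows the same approach as the paper: the paper simply states that for an orbit $(\xi,\eta) = \parens{\mu(\xi_0\eta_0)^t\xi_0,\ \eta_0/\mu(\xi_0\eta_0)^t}$ of $N^t$, one verifies by direct computation that $\deriv{\xi}{t} = \pderiv{H}{\eta}$ and $\deriv{\eta}{t} = -\pderiv{H}{\xi}$, which is exactly what you carry out in detail.
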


\begin{proof}
    Let $ (\xi,\eta) = \parens{\mu(\xi_0 \eta_0)^t \xi_0, \frac{\eta_0}{\mu(\xi_0 \eta_0)^t}} $ be an orbit of $N^t$. It can be verified that through direct computation that $ \deriv{\xi}{t} = \pderiv{H}{\eta} $ and $ \deriv{\eta}{t} = -\pderiv{H}{\xi} $.
\end{proof}

\begin{lemma} \label{lem:symplecticA}
    The \keyterm{symplectic action} of $H$ on an integral curve $ \zeta: [k_1, k_2] \rightarrow \R^2 $ of the Hamiltonian flow is given by
    \begin{equation} \label{symplectic-action}
        A(\zeta) := \int_\zeta (\eta\dd\xi - H \dd t) = (k_2 - k_1) \Sigma(h),
    \end{equation}
    where $h$ is the Birkhoff energy on the curve $\zeta$ and
    \begin{equation} \label{Sigma-def}
        \Sigma(h) := h \log \mu(h) - \int_0^h \log \mu(\tau) \dd\tau.
    \end{equation}
\end{lemma}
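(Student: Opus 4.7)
The plan is to carry out a direct computation by parametrizing the integral curve explicitly using Lemma \ref{lem:Hamiltonian}. Since the Hamiltonian flow $N^t$ preserves the Birkhoff energy $\xi\eta$, the value $h := \xi\eta$ is constant along any integral curve $\zeta$, and hence so is $H(\xi,\eta) = \int_0^h \log\mu(\tau)\,d\tau$. This already handles the $H\,dt$ contribution, which integrates to $(k_2 - k_1)\int_0^h \log\mu(\tau)\,d\tau$.

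For the $\eta\,d\xi$ contribution, I would take the explicit parametrization from (\ref{fractional-N}), writing the integral curve as $\zeta(t) = (\mu(h)^t \xi_0, \mu(h)^{-t}\eta_0)$ for some initial point $(\xi_0,\eta_0)$ with $\xi_0\eta_0 = h$. Differentiating gives
\[
    \frac{d\xi}{dt} = \log\mu(h)\cdot\mu(h)^t \xi_0 = \log\mu(h)\cdot\xi(t),
\]
so that along $\zeta$,
\[
    \eta\,d\xi = \eta(t)\log\mu(h)\,\xi(t)\,dt = h\log\mu(h)\,dt,
\]
again a constant. Integrating from $k_1$ to $k_2$ yields $(k_2 - k_1)\,h\log\mu(h)$.

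Combining the two pieces gives
\[
    A(\zeta) = (k_2 - k_1)\left(h\log\mu(h) - \int_0^h \log\mu(\tau)\,d\tau\right) = (k_2 - k_1)\Sigma(h),
\]
as required. There is essentially no obstacle here: the entire lemma is a bookkeeping consequence of the conservation of Birkhoff energy along the Hamiltonian flow, together with the explicit form (\ref{fractional-N}) of $N^t$ which makes the pullback of $\eta\,d\xi$ to the time variable immediate. The only subtle point worth flagging is ensuring the parametrization is valid on the full interval $[k_1,k_2]$, which follows from the fact that the flow is defined on all of $U_1 \cup U_2$ and that the curve stays in the level set $\{\xi\eta = h\}$.
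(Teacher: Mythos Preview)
Your proof is correct and is essentially identical to the paper's own argument: both parametrize $\zeta$ explicitly via the flow formula (\ref{fractional-N}), observe that $\eta\,d\xi = h\log\mu(h)\,dt$ and $H = \int_0^h \log\mu(\tau)\,d\tau$ are constant along the orbit, and integrate over $[k_1,k_2]$. The paper packages the computation slightly more tersely by writing $h\log\mu(h) = h\hat{H}'(h)$ where $\hat{H}(h) := H(\xi,\eta)$, but there is no substantive difference.
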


The $1$-form $ \eta\dd\xi - H \dd t $ is called the \keyterm{Poincar\'{e}-Cartan integral invariant} and has many applications in the theory of Hamiltonian dynamics. For more on this topic, see \cite{Arnold}.

\begin{proof}[Proof of Lemma \ref{lem:symplecticA}]
    Define $ \hat{H}(h) := H(\xi,\eta) $ for $ h = \xi\eta $.
    Suppose that $ \zeta(0) = (\xi_0,\eta_0) $ so that $ \zeta(t) = \parens{\mu(h)^t \xi_0, \frac{\eta_0}{\mu(h)^t}} $, where $ h = \xi_0 \eta_0 $. Then,
    $$ A(\zeta) = \int_\zeta (\eta\dd\xi - H \dd t) = \int_{k_1}^{k_2} \parens{h \log\mu(h) - \hat{H}(h)} dt = (k_2 - k_1) \parens{h \hat{H}'(h) - \hat{H}(h)} = (k_2 - k_1) \Sigma(h). $$
\end{proof}

\begin{lemma} \label{lem:length-expr}
    There are analytic functions $ S_1: U_1 \rightarrow \R $ and $ S_2: U_2 \rightarrow \R $ such that for $ \set{i,j} = \set{1,2} $ and $ (\xi,\eta) \in U_i \cup N^{-1}(U_j) $,
    \begin{equation} \label{L-action}
        L_{ij}(s,s') = \ell_0 + \Sigma(\xi\eta) + S_j\parens{N(\xi,\eta)} - S_i(\xi,\eta),
    \end{equation}
    where $ (s,r) = \Phi_i(\xi,\eta) $ and $ (s',r') = \Phi_j\parens{N(\xi,\eta)} $.
    Furthermore, for $ i \in \set{1,2} $,
    \begin{equation} \label{S-involution}
        S_i(\xi,\eta) + S_i(\eta,\xi) = -\xi\eta.
    \end{equation}
    Also, there is an analytic function $ S_-: U_- \rightarrow \R $ such that for $ (\xi,\eta) \in U_- $,
    \begin{equation} \label{L-action-neg}
        L_{31}(s,s') = \ell_0 + \Sigma(\xi\eta) + S_1\parens{N(\xi,\eta)} - S_-(\xi,\eta),
    \end{equation}
    where $ (s,r) = \Phi_-(\xi,\eta) $ and $ (s',r') = \Phi_1\parens{N(\xi,\eta)} $, and an analytic function $ S_+: U_+ \rightarrow \R $ such that for $ (\xi,\eta) \in U_+ $,
    \begin{equation} \label{L-action-pos}
        L_{13}(s,s') = \ell_0 + \Sigma(\xi\eta) + S_+(\xi,\eta) - S_1\parens{N^{-1}(\xi,\eta)},
    \end{equation}
    where $ (s,r) = \Phi_1^{-1}\parens{N^{-1}(\xi,\eta)} $ and $ (s',r') = \Phi_+(\xi,\eta) $.
    Furthermore, for $ (\xi,\eta) \in U_- $,
    \begin{equation} \label{S-involution-3}
        S_-(\xi,\eta) + S_+(\eta,\xi) = -\xi\eta.
    \end{equation}
\end{lemma}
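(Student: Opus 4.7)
The plan is to introduce each of the functions $S_1, S_2, S_-, S_+$ as a primitive of the closed $1$-form measuring the failure of the corresponding semi-conjugacy to preserve the Liouville form $r\,ds$. Once these primitives exist, the length identities reduce to pulling back the generating-function relation $F_{ij}^*(r\,ds) - r\,ds = dL_{ij}$ (Lemma \ref{lem:L-generating}) through the semi-conjugacies, after combining with the direct identity
\begin{equation*}
N^*(\eta\,d\xi) - \eta\,d\xi = d\Sigma(\xi\eta),
\end{equation*}
which follows from $\Sigma'(h) = h\mu'(h)/\mu(h)$ and the explicit form (\ref{N-def}) of $N$. The involution identities then follow from one further pullback through $I$, using Proposition \ref{prop:BNF}(d,e) and (\ref{Phi-3-involution}).

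First I would construct the primitives. Since each $\Phi_i$ and each $\Phi_\pm$ is analytic, area- and orientation-preserving, $\Phi^*(r\,ds) - \eta\,d\xi$ is closed on its simply connected domain, hence exact; this gives analytic $S_i$ on $U_i$ and $S_\pm$ on $U_\pm$, unique up to additive constants. I would normalize $S_1(0,0) = S_2(0,0) = 0$ and defer fixing the constants in $S_\pm$ until they are forced.

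Next I would derive (\ref{L-action}). Pulling $F_{ij}^*(r\,ds) - r\,ds = dL_{ij}$ back through $\Phi_i$, using the conjugacy $F_{ij}\circ\Phi_i = \Phi_j\circ N$ from Proposition \ref{prop:BNF}, yields an equality of exact $1$-forms integrating to $L_{ij}\circ\Phi_i - \Sigma(\xi\eta) - S_j\circ N + S_i \equiv \mathrm{const}$. Evaluating at the origin — where $L_{ij}(0,0) = \ell_0$ by our coordinate normalization, $N(0,0) = (0,0)$, $\Sigma(0) = 0$, and $S_i(0,0) = 0$ — pins the constant at $\ell_0$. The identities (\ref{L-action-neg}) and (\ref{L-action-pos}) then follow by the same template applied to the conjugacies built into the definitions of $\Phi_-$ and $\Phi_+$; the free additive constants in $S_\pm$ are chosen so that each length-formula constant comes out to $\ell_0$. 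Both involution identities arise from a single pullback: applying $I^*$ to $\Phi^*(r\,ds) = \eta\,d\xi + dS$ and using either $\Phi_i\circ I = I_0\circ\Phi_i$ or $\Phi_+\circ I = I_0\circ\Phi_-$, together with $I^*(\eta\,d\xi) = \xi\,d\eta$ and $I_0^*(r\,ds) = -r\,ds$, produces the exact $1$-form equation $\xi\eta + S\circ I + S' \equiv \mathrm{const}$; the constant is $0$ by normalization at the origin in the $S_i$ case, and by consuming the last degree of freedom in the constants of $S_\pm$ in the $S_-, S_+$ case, subject to a brief compatibility check with the length-formula normalization.

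The main obstacle I expect is pure bookkeeping: verifying the signs in the various pullback computations (notably $N^*(\eta\,d\xi) - \eta\,d\xi = d\Sigma$ and its inverse counterpart) and threading the four additive constants in $S_1, S_2, S_-, S_+$ through all five identities so that $\ell_0$ appears in each of the length formulas while $0$ appears in each involution identity. The underlying geometric content — that the generating function of an exact symplectic map transforms only by a coboundary under an exact symplectic change of coordinates — is classical, and the calculation of $\Sigma'$ is completely explicit.
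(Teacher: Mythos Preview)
Your approach is essentially the paper's: both define $S_i$ (and $S_\pm$) as primitives of $\Phi^*(r\,ds) - \eta\,d\xi$, pull back the generating-function relation $dL_{ij} = r'\,ds' - r\,ds$ through the semi-conjugacies, and obtain the involution identities by a further pullback through $I$. Your direct verification of $N^*(\eta\,d\xi) - \eta\,d\xi = d\Sigma(\xi\eta)$ via $\Sigma'(h) = h\mu'(h)/\mu(h)$ is a small economy over the paper, which extracts the same identity by routing through the symplectic action of the Hamiltonian flow (Lemmas~\ref{lem:Hamiltonian}--\ref{lem:symplecticA}).

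One caution on the constants for $S_\pm$: once both additive constants are fixed by demanding that (\ref{L-action-neg}) and (\ref{L-action-pos}) carry the constant $\ell_0$, there is \emph{no} remaining degree of freedom, so the constant in (\ref{S-involution-3}) is forced rather than ``consumed.'' You correctly flag the need for a compatibility check, but be aware that it is a genuine identity to verify, not a normalization. The paper does this cleanly: rearranging (\ref{L-action-neg}) and (\ref{L-action-pos}) and using that $L_{31}(s,s')$ is invariant under the time-reversal gives
\[
S_-(\xi,\eta) + S_+(\eta,\xi) = S_1\bigl(N(\xi,\eta)\bigr) + S_1\bigl(N^{-1}(\eta,\xi)\bigr),
\]
and since $N^{-1}(\eta,\xi) = I\bigl(N(\xi,\eta)\bigr)$ by (\ref{involution-Birkhoff}), this collapses to $-\xi\eta$ by the already-established identity (\ref{S-involution}). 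You should expect to need exactly this reduction (or an equivalent one) to close the argument; the point is that (\ref{S-involution-3}) is not independent of (\ref{S-involution}) but inherited from it.
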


\begin{proof}
    The functions $\Phi_1$ and $\Phi_2$ are area-preserving and orientation-preserving transformations in two dimensions, and are thus symplectic. Therefore, for $ i \in \set{1,2} $, there are functions $S_i(\xi,\eta)$ such that
    $$ \Phi_i^* \, r \dd s = \eta\dd\xi + \dd S $$
    (see, for example, Section 45 of \cite{Arnold}).
    For $ \set{i,j} = \set{1,2} $, if $(s,r)$, $(\xi,\eta)$, and $(s',r')$, are as in the lemma and $ (\xi',\eta') = N(\xi,\eta) $, then
    $$ (\Phi_i \times \Phi_j)^*(r' \dd s' - r \dd s) = \eta' \dd\xi' - \eta\dd\xi + \dd S_j(\xi',\eta') - \dd S_i(\xi,\eta). $$
    If $\zeta$ is the integral curve of $H$ from $(\xi,\eta)$ to $(\xi',\eta')$, then
    $$ (\Phi_i \times \Phi_j)^*(r' \dd s' - r \dd s) = dA(\zeta) + \dd S_j(\xi',\eta') - \dd S_i(\xi,\eta). $$
    If $ \ell = L_{ij}(s,s') $, then by Lemma \ref{lem:L-generating},
    $$ L_{ij}^* \dd\ell = \pderiv{L_{ij}}{s} \dd s + \pderiv{L_{ij}}{s'} \dd s' = r' \dd s' - r \dd s. $$
    By integration, using the value of $A(\zeta)$ in Lemma \ref{lem:symplecticA},
    $$ L_{ij}(s,s') = \Sigma(\xi\eta) + S_j(\xi',\eta') - S_i(\xi,\eta) + C $$
    for some constant $C$. If $s$ and $s'$ are the $2$-periodic points, then $ (\xi,\eta) = (\xi',\eta') = (0,0) $, $ A(\zeta) = 0 $, and $ L_{ij}(s,s') = \ell_0 $, so $ C = \ell_0 $ and we have (\ref{L-action}).
    
    By statements (d) and (e) of Proposition \ref{prop:BNF}, if $ i \in \set{1,2} $, then
    \begin{align*}
        \dd S_i(\eta,\xi) & = I^* \, \dd S_i(\xi,\eta) = I^* \, \Phi_i^* \, r \dd s - I^* \, \eta\dd\xi = \Phi_i^* \, I_0^* \, (r \dd s) - \xi\dd\eta = -\Phi_i^* \, (r \dd s) - \xi\dd\eta \\
        & = -\dd S_i(\xi,\eta) - \eta\dd\xi - \xi\dd\eta = -\dd S_i(\xi,\eta) - d(\xi\eta).
    \end{align*}
    Hence, $ S_i(\xi,\eta) + S_i(\eta,\xi) + \xi\eta = C_i $ for some constant $C_i$. By setting $ (\xi,\eta) = (0,0) $, $i=1$, and $j=2$ in (\ref{L-action}), it is clear that $ C_1 = C_2 $. Since (\ref{L-action}) remains unchanged by adding the same arbitrary constant to $S_1$ and $S_2$, we may set $ C_1 = C_2 = 0 $.
    This proves (\ref{S-involution}).
    
    To satisfy the condition (\ref{L-action-neg}), we may simply define, for $ (\xi,\eta) \in U_- $,
    $$ S_-(\xi,\eta) := \ell_0 + \Sigma(\xi\eta) + S_1\parens{N(\xi,\eta)} - L_{31}(s,s'), $$
    where $ (s,r) = \Phi_-(\xi,\eta) $ and $ (s',r') = \Phi_1\parens{N(\xi,\eta)} $. Since the involution preserves the points of collision $\gamma_i(s)$, $L_{31}(s,s')$ is also the distance between the points at which the collisions $\Phi_+(\eta,\xi)$ and $\Phi_1\parens{N^{-1}(\eta,\xi)}$ occur. Therefore, to satisfy the condition (\ref{L-action-pos}), we may define
    $$ S_+(\eta,\xi) := L_{31}(s,s') - \ell_0 - \Sigma(\xi\eta) + S_1\parens{N^{-1}(\eta,\xi)}. $$
    We then have
    $$ S_-(\xi,\eta) + S_+(\eta,\xi) = S_1\parens{N(\xi,\eta)} + S_1\parens{N^{-1}(\eta,\xi)}. $$
    By (\ref{involution-Birkhoff}), $ N^{-1}(\eta,\xi) = I\parens{N(\xi,\eta)} $, so (\ref{S-involution-3}) follows directly from (\ref{S-involution}).
\end{proof}

\begin{proposition} \label{prop:length-action}
    For all sufficiently large $m$ and $n$, the perimeter of the orbit $\set{x_k^{m,n}}_{k=0}^{2m+2n-1}$ is
    \begin{equation} \label{length-action}
        \ell_{m,n} = (2m+2n)\ell_0 + 2L_\infty + 2m \Sigma(h_A) + 2n \Sigma(h_B) + h_A + h_B - 2\tilde{M}(h_A,h_B)
    \end{equation}
    for some constant $L_\infty$, where $\ell_0$ is the distance between the $2$-periodic points on scatterers $1$ and $2$, $ \tilde{M} := M \circ \Psi^{-1} $, and $\Psi$ is as defined in (\ref{Psi-p-h}).
\end{proposition}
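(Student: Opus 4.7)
The plan is to express $\ell_{m,n}$ as a sum of the $2m+2n$ edge lengths and reduce it using the generating-function identities of Lemma \ref{lem:length-expr}. The orbit decomposes naturally into two arcs, each running from one collision on scatterer $3$ to the next: the first from $x_0^{m,n}$ to $x_{2m}^{m,n}$ (governed in Birkhoff coordinates by $(\xi_A,\eta_A)$), and the second back to $x_0^{m,n}$ (governed by $(\xi_B,\eta_B)$).

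On each arc I would apply (\ref{L-action-neg}) to the initial $3 \to 1$ edge, (\ref{L-action}) to each interior edge between scatterers $1$ and $2$, and (\ref{L-action-pos}) to the final $1 \to 3$ edge. At every interior collision point the $S_i$ term appears with opposite signs from the two adjacent edges and telescopes out. What survives on the first arc is $2m\ell_0 + 2m\Sigma(h_A) + S_+(\eta_A,\xi_A) - S_-(\xi_A,\eta_A)$, with the analogous expression for the second arc. Applying (\ref{S-involution-3}) to write $S_+(\eta_A,\xi_A) = -h_A - S_-(\xi_A,\eta_A)$ (and similarly for $B$), the total perimeter reduces to
\begin{equation*}
    \ell_{m,n} = (2m+2n)\ell_0 + 2m\Sigma(h_A) + 2n\Sigma(h_B) - h_A - h_B - 2\bigl[S_-(\xi_A,\eta_A) + S_-(\xi_B,\eta_B)\bigr].
\end{equation*}

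The crux of the argument is the identification
\begin{equation*}
    T(\eta_A,\eta_B) := S_-(\xi_A,\eta_A) + S_-(\xi_B,\eta_B) + h_A + h_B = M(\eta_A,\eta_B) + C
\end{equation*}
for a constant $C$. I would work in the $(\eta_A,\eta_B)$ chart from Lemma \ref{lem:coord}, so that $\xi_A, \xi_B, h_A, h_B$ are all functions of $(\eta_A,\eta_B)$ with $\xi_A = \partial M/\partial\eta_A$ and $\xi_B = \partial M/\partial\eta_B$ by Proposition \ref{prop:M-generating}. Since $\Phi_-$ is symplectic, unwinding the definition of $S_-$ from Lemma \ref{lem:length-expr} (using $N^*(\eta\,d\xi) = \eta\,d\xi + d\Sigma(\xi\eta)$) gives the primitive relation $dS_- = \Phi_-^*(r\,ds) - \eta\,d\xi$. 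The key observation is the involution identity already established: if $(s,r) = \Phi_-(\xi_A,\eta_A)$, then $(s,-r) = \Phi_-(\xi_B,\eta_B)$. Viewing $s$ and $r$ as functions on the chart, the pullback contributes $r\,ds$ at $(\xi_A,\eta_A)$ and $-r\,ds$ at $(\xi_B,\eta_B)$, and therefore cancels. Summing then yields
\begin{equation*}
    dT = -\eta_A\,d\xi_A - \eta_B\,d\xi_B + d(\xi_A\eta_A) + d(\xi_B\eta_B) = \xi_A\,d\eta_A + \xi_B\,d\eta_B = dM,
\end{equation*}
so $T = M + C$. Evaluating at $(\eta_A,\eta_B) = (0,0)$, where $M(0,0) = 0$ and $h_A = h_B = 0$, pins down $C = 2S_-(\xi_\infty,0)$. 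Substituting back into the reduced perimeter formula and setting $L_\infty := -2S_-(\xi_\infty,0)$ produces (\ref{length-action}).

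The main obstacle I anticipate is the differential bookkeeping in establishing $dT = dM$: one must interpret $S_-(\xi_B,\eta_B)$ correctly as a composite function of the chart coordinates through $\xi_B = \partial M/\partial\eta_B$, and verify that the two pullbacks of $r\,ds$ really do cancel as $1$-forms on the chart rather than merely pointwise. Once $dT = dM$ is in hand, the remaining arithmetic is routine substitution.
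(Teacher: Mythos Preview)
Your proposal is correct and follows essentially the same route as the paper: telescoping the edge lengths via Lemma~\ref{lem:length-expr}, applying (\ref{S-involution-3}) to reach the reduced formula, and then establishing $dT = dM$ by exploiting the involution identity $\Phi_-(\xi_B,\eta_B) = I_0\bigl(\Phi_-(\xi_A,\eta_A)\bigr)$ so that the two pullbacks of $r\,ds$ cancel. The paper packages the last step as a commutative-diagram pullback computation $(I_0 \circ \Phi_-)^*(r\,ds) = (\Phi_- \circ G \circ I)^*(r\,ds)$, but this is exactly your cancellation argument in different clothing, and your explicit evaluation of the constant $L_\infty = -2S_-(\xi_\infty,0)$ is a detail the paper leaves implicit.
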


\begin{proof}
    By repeated application of Lemma \ref{lem:length-expr}, the length of the orbit between the collision points $x_0^{m,n}$ and $x_{2m}^{m,n}$ is $ 2m \ell_0 + 2m \Sigma(h_A) + S_+(\eta_A,\xi_A) - S_-(\xi_A,\eta_A) $, and the length of the orbit between the collision points $x_{2m}^{m,n}$ and $x_{2m+2n}^{m,n}$ is $ 2n \ell_0 + 2n \Sigma(h_B) + S_+(\eta_B,\xi_B) - S_-(\xi_B,\eta_B) $. Therefore,
    $$ \ell_{m,n} = (2m+2n) \ell_0 + 2m \Sigma(h_A) + 2n \Sigma(h_B) + S_+(\eta_A,\xi_A) - S_-(\xi_A,\eta_A) + S_+(\eta_B,\xi_B) - S_-(\xi_B,\eta_B). $$
    By applying (\ref{S-involution-3}), we have
    $$ \ell_{m,n} = (2m+2n) \ell_0 + 2m \Sigma(h_A) + 2n \Sigma(h_B) - h_A - h_B - 2 S_-(\xi_A,\eta_A) - 2 S_-(\xi_B,\eta_B). $$
    Notice that $ \tilde{M}(h_A,h_B) = M(\eta_A,\eta_B) $. It suffices to prove that
    $$ M(\eta_A,\eta_B) = S_-(\xi_A,\eta_A) + S_-(\xi_B,\eta_B) + h_A + h_B - L_\infty $$
    for some constant $L_\infty$.
    
    Consider the commutative diagram in Figure \ref{fig:length-action-pf}. Notice that
    $$ (I_0 \circ \Phi_-)^* \, r \dd s = \Phi_-^* \, I_0^* \, r \dd s = -\Phi_-^* \, r \dd s = -\eta_A \dd \xi_A - \dd S_-(\xi_A,\eta_A) $$
    and
    $$ (\Phi_- \circ G \circ I)^* \, r \dd s = (G \circ I)^* \parens{\eta_B \dd \xi_B + \dd S(\xi_B,\eta_B)}. $$
    Equating these differential forms yields
    $$ \eta_A \dd \xi_A + \eta_B \dd \xi_B + \dd S_-(\xi_A,\eta_A) + \dd S_-(\xi_B,\eta_B) = 0, $$
    or equivalently,
    $$ \dd S_-(\xi_A,\eta_A) + \dd S_-(\xi_B,\eta_B) + \dd h_A + \dd h_B = \xi_A \dd \eta_A + \xi_B \dd \eta_B. $$
    The equations (\ref{M-def}) imply that $ \xi_A \dd \eta_A + \xi_B \dd \eta_B = \dd M(\eta_A,\eta_B) $, so integrating completes the proof.
\end{proof}

\begin{figure}[ht]
    \centering
    \begin{tikzpicture}
        \node (A) at (0,2) {$(\xi_A,\eta_A)$};
        \node (B) at (3,2) {$(\xi_B,\eta_B)$};
        \node (C) at (0,0) {$(s,r)$};
        \node (D) at (3,0) {$(s,-r)$};
        
        \path[|->] (A) edge node[above]{$ G \circ I $} (B);
        \path[|->] (A) edge node[left]{$\Phi_-$} (C);
        \path[|->] (C) edge node[above]{$I_0$} (D);
        \path[|->] (B) edge node[right]{$\Phi_-$} (D);
    \end{tikzpicture}
    \caption{Commutative diagram used in the proof of Proposition \ref{prop:length-action}.}
    \label{fig:length-action-pf}
\end{figure}

\begin{corollary} \label{cor:l-UT}
    The length spectral data can be expressed as a strictly triangular power series
    \begin{equation} \label{length-triangular}
        \ell_{m,n} = (2m + 2n) \ell_0 + \sum_{p=0}^\infty \sum_{q=0}^\infty \sum_{i=0}^{\max\set{0,p-1}} \sum_{j=0}^{\max\set{0,q-1}} \lc{pq}{ij} m^i n^j z_A^i z_B^j,
    \end{equation}
    where $ z_A = \xi_\infty^2 \lambda^{2m} $ and $ z_B = \xi_\infty^2 \lambda^{2n} $.
\end{corollary}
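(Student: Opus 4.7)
The plan is to read Proposition \ref{prop:length-action} as
\begin{equation*}
    \ell_{m,n} - (2m+2n)\ell_0 = 2L_\infty + (h_A + h_B) + 2m\Sigma(h_A) + 2n\Sigma(h_B) - 2\tilde{M}(h_A, h_B)
\end{equation*}
and verify that each of the five summands on the right is strictly triangular; the result then follows from property (T1), with the constant $2L_\infty$ absorbed into $\lc{00}{00}$.

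Three of the five summands present no difficulty. The constant $2L_\infty$ is trivially strictly triangular. The sum $h_A + h_B$ is strictly triangular by Proposition \ref{prop:h-triangular} and (T1). For $\tilde{M}(h_A, h_B)$, the function $\tilde{M} = M \circ \Psi^{-1}$ is analytic near the origin and both $h_A$, $h_B$ vanish at $(z_A, z_B) = (0,0)$ (being multiples of $z_A$ and $z_B$ respectively), so this is a well-defined composition of an analytic function with formal power series and (T3) yields strict triangularity.

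The terms $2m\Sigma(h_A)$ and $2n\Sigma(h_B)$ require more care, because naively applying (T3) to get $\Sigma(h_A)$ strictly triangular and then multiplying by $m$ raises the $m$-degree of each $\coefp{\cdot}{pq}$ by one and produces only a triangular series. The essential additional ingredient is the second-order vanishing of $\Sigma$: differentiating (\ref{Sigma-def}) gives $\Sigma'(h) = h\mu'(h)/\mu(h)$, so $\Sigma(0) = \Sigma'(0) = 0$ and $\Sigma(h) = h^2 R(h)$ for some analytic $R$. Hence
\begin{equation*}
    \Sigma(h_A)/z_A^2 = (h_A/z_A)^2 \cdot R(h_A).
\end{equation*}
Property (T4) applied to the strictly triangular, multiple-of-$z_A$ series $h_A$ shows that $h_A/z_A$ is both triangular and strictly triangular in $z_B$; squaring preserves these properties by (T2), and $R(h_A)$ is strictly triangular by (T3), so the product $\Sigma(h_A)/z_A^2$ satisfies $\deg \coefp{\Sigma(h_A)/z_A^2}{pq} \leq (p, \max\set{0,q-1})$. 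Multiplying by $m$ raises the $m$-degree bound by one, and then multiplying back by $z_A^2$ shifts the $z_A$-index by two, so $m\Sigma(h_A)$ vanishes identically for $p \leq 1$ and obeys $\deg \coefp{m\Sigma(h_A)}{pq} \leq (p-1, \max\set{0,q-1})$ for $p \geq 2$, which is precisely strict triangularity. A symmetric argument, exchanging $A \leftrightarrow B$ and $z_A \leftrightarrow z_B$, handles $n\Sigma(h_B)$.

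The main obstacle is exactly this bookkeeping: without the quadratic vanishing of $\Sigma$, the prefactors of $m$ and $n$ would survive in the top $m$-degree of each order and leave only triangularity in place of strict triangularity. Once this step is resolved, combining the five strictly triangular pieces via (T1) produces the claimed expansion and identifies $\lc{00}{00} = 2L_\infty$.
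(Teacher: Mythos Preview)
Your proof is correct and follows the same approach as the paper, which simply invokes Proposition \ref{prop:length-action}, Proposition \ref{prop:h-triangular}, analyticity of $\Sigma$ and $\tilde{M}$, and properties (T1) and (T3). In fact your version is more careful: you explicitly isolate and resolve the one genuinely nontrivial point---that the prefactors $m$ and $n$ in $2m\Sigma(h_A)$ and $2n\Sigma(h_B)$ do not destroy strict triangularity, thanks to the quadratic vanishing $\Sigma(h)=h^2R(h)$---which the paper's one-line proof leaves implicit.
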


\begin{proof}
    This follows from Proposition \ref{prop:length-action}, the fact that $\Sigma$ and $\tilde{M}$ are analytic, Proposition \ref{prop:h-triangular}, and properties (T3) and (T1) of triangular power series.
\end{proof}

Just as in Proposition \ref{prop:h-triangular}, we do not necessarily have convergence for large $m$ and $n$ after substituting $ z_A = \xi_\infty^2 \lambda^{2m} $ and $ z_B = \xi_\infty^2 \lambda^{2n} $, but we do have the asymptotic approximations
\begin{equation} \label{r-s-approx-l}
    \ell_{m,n} = (2m+2n) \ell_0 + \sum_{p=0}^r \sum_{q=0}^s \sum_{i,j} \lc{pq}{ij} m^i n^j z_A^p z_B^q + o(z_A^r z_B^s)
\end{equation}
for large $m$ and $n$.

\begin{remark} \label{rem:parity}
    As remarked in the introduction, the parity of $m$ and $n$ determine which scatterers the perpendicular bounces occur in the cyclicity-$2$ orbits.
    However, the Birkhoff coordinates do not distinguish between scatterers $1$ and $2$, and we have computed asymptotics of the length spectrum entirely from the dynamics of the Birkhoff coordinates, so this distinction is not realized in the length spectral data.
    This is a source of difficulty in removing the requirement of reflexive symmetry from length spectrum rigidity.
\end{remark}

We finish this section by determining explicitly the first-order approximation of the length spectral data.

\begin{proposition} \label{prop:l-order-1}
    It holds that $ \lc{00}{00} = 2L_\infty $ and $ \lc{10}{00} = \lc{01}{00} = -1 $.
\end{proposition}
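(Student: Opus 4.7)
The plan is to extract $\lc{00}{00}$, $\lc{10}{00}$, and $\lc{01}{00}$ directly from the symplectic-action formula (\ref{length-action}). By Corollary \ref{cor:l-UT} the quantity $\ell_{m,n} - (2m+2n)\ell_0$ is strictly triangular in both $z_A$ and $z_B$, so $\lc{00}{00}$ is its constant term and $\lc{10}{00}$, $\lc{01}{00}$ are the coefficients of $z_A^1$ and $z_B^1$ at $m = n = 0$. No polynomial factors in $m$ or $n$ can contribute to these three coefficients, so it suffices to Taylor-expand each term on the right of (\ref{length-action}) through first order in $(h_A, h_B)$ and to feed in the Proposition \ref{prop:h-triangular} expansions $h_A = z_A + O(\|z\|^2)$ and $h_B = z_B + O(\|z\|^2)$.

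For the Hamiltonian piece, differentiating (\ref{Sigma-def}) gives $\Sigma'(h) = h(\log\mu)'(h)$, so $\Sigma(0) = \Sigma'(0) = 0$ and $\Sigma(h) = O(h^2)$. In particular $2m\Sigma(h_A) + 2n\Sigma(h_B)$ is of order $mz_A^2 + nz_B^2$, which contributes nothing at $(p,q,i,j) = (0,0,0,0)$, $(1,0,0,0)$, or $(0,1,0,0)$. Setting $z_A = z_B = 0$ then kills $h_A$, $h_B$, $\Sigma(h_A)$, and $\Sigma(h_B)$ simultaneously, and the remaining $\tilde{M}(0,0) = M(0,0) = 0$ by Proposition \ref{prop:M-generating}, so (\ref{length-action}) collapses to $2L_\infty$ and $\lc{00}{00} = 2L_\infty$.

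The only step with real substance, and thus the main obstacle, is the first-order Taylor expansion of $\tilde{M}$ at the origin, which is the only place the geometry of the gluing map enters. From (\ref{M-def}) one has $\partial M/\partial \eta_A = \xi_A$, and at the homoclinic limit $\eta_A = \eta_B = 0$ the function $\psi_A$ of Lemma \ref{lem:coord} evaluates to $\xi_\infty$, giving $\partial M/\partial \eta_A(0,0) = \xi_\infty$; by the symmetry of $M$ the same holds for $\partial M/\partial \eta_B$. Combined with $M(0,0) = 0$ this yields $M(\eta_A, \eta_B) = \xi_\infty(\eta_A + \eta_B) + O(\|\eta\|^2)$. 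Composing with $\Psi^{-1}$, whose derivative at the origin is $\xi_\infty^{-1} I$ by (\ref{Psi-deriv}), produces $\tilde{M}(h_A, h_B) = h_A + h_B + O(\|h\|^2)$. Substituting into (\ref{length-action}) and collecting, the first-order part at $m = n = 0$ is $(h_A + h_B) - 2(h_A + h_B) = -(z_A + z_B) + O(\|z\|^2)$, giving $\lc{10}{00} = \lc{01}{00} = -1$.
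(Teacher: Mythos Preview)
Your proof is correct and follows essentially the same route as the paper: both expand the symplectic-action formula (\ref{length-action}) to first order, use $\Sigma(h)=O(h^2)$ to discard the $2m\Sigma(h_A)+2n\Sigma(h_B)$ terms, and compute the first-order Taylor expansion $\tilde{M}(h_A,h_B)=h_A+h_B+o(h_A+h_B)$ from the generating-function identity (\ref{M-def}) together with $\psi_A(0,0)=\xi_\infty$ and $D\Psi(0,0)=\xi_\infty I$. The only cosmetic difference is that you invoke Corollary~\ref{cor:l-UT} and the strict-triangularity bookkeeping explicitly (phrased as ``at $m=n=0$''), whereas the paper simply writes $h_A\sim z_A$, $h_B\sim z_B$ and matches coefficients directly.
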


\begin{proof}
    It is clear from (\ref{Sigma-def}) that $ \Sigma(h) = \Oo(h^2) $. Therefore, $ \Sigma(h_A) = \Oo(z_A^2) $ and $ \Sigma(h_B) = \Oo(z_B^2) $, so by the formula (\ref{length-action}), the coefficients $\lc{00}{00}$, $\lc{10}{00}$, and $\lc{01}{00}$ are determined by the first-order approximation of $M$ at $(0,0)$ in the sense that
    $$ \lc{00}{00} + \lc{10}{00} z_A + \lc{01}{00} z_B = 2L_\infty + h_A + h_B - 2\tilde{M}(h_A,h_B) + o(z_A + z_B). $$
    Since $ h_A \sim z_A $ and $ h_B \sim z_B $,
    $$ \lc{00}{00} + \lc{10}{00} h_A + \lc{01}{00} h_B = 2L_\infty + h_A + h_B - 2\tilde{M}(h_A,h_B) + o(h_A + h_B). $$
    
    The equations (\ref{M-def}) uniquely determine the first-order approximation of $M$, specifically,
    $$ M(\eta_A,\eta_B) = \xi_\infty \eta_A + \xi_\infty \eta_B + o(\eta_A + \eta_B). $$
    Since $ \xi_A \sim \xi_\infty $, $ \xi_B \sim \xi_\infty $, $ h_A \sim z_A $, and $ h_B \sim z_B $ for large $m$ and $n$, this simplifies to
    \begin{equation} \label{M-order-1}
        \tilde{M}(h_A,h_B) = h_A + h_B + o(h_A + h_B).
    \end{equation}
    Therefore,
    $$ \lc{00}{00} + \lc{10}{00} h_A + \lc{01}{00} h_B = 2L_\infty - h_A - h_B + o(h_A + h_B), $$
    and the proof is complete.
\end{proof}

\section{Recovering the Dynamics from the Length Spectral Data} \label{sec:recover}

The goal of this section is to show that the functions $\mu$ and $\tilde{M}$ can be recovered from the length spectral data $\ell_{m,n}$ via the power series expansion (\ref{length-triangular}).
We write the power series of the function $\mu$ as
\begin{equation} \label{mu-series}
    \mu(h) = \lambda \, \parens{1 + \sum_{j=1}^\infty \delta_j h^j},
\end{equation}
the power series of $M$ as
\begin{equation} \label{M-series-p}
    M(\eta_A,\eta_B) = \sum_{i,j} \hat{a}_{ij} \eta_A^i \eta_B^j,
\end{equation}
and the power series of $\tilde{M}$ as
\begin{equation} \label{M-series-h}
    \tilde{M}(h_A,h_B) = \sum_{i,j} a_{ij} h_A^i h_B^j.
\end{equation}
Note that by symmetry of $M$, $ \hat{a}_{ij} = \hat{a}_{ji} $ and $ a_{ij} = a_{ji} $.

In our proof, we will consider the coefficients $\hc{pq}{ij}$ and $\lc{pq}{ij}$ as functions of the coefficients $\delta_j$ and $a_{ij}$.
For any nonnegative integer $\nu$, we define $\poly{\nu}$ to be the set of functions in the following variables:
\begin{itemize}
    \item $\delta_j$ for $ j \leq \nu $
    \item $a_{ij}$ for $ i+j \leq \nu $
\end{itemize}
Often, we will use the notation $\poly{\nu}$ to refer to a function in this set rather than the set itself. For example, $ \delta_4 + \poly{3} $ means a function equal to $ \delta_4 + P_3 $ for some $ P_3 \in \poly{3} $.

We use throughout this section the inclusion $ \poly{0} \subseteq \poly{1} \subseteq \poly{2} \subseteq \poly{3} \subseteq \hdots $ and the fact that the set $\poly{\nu}$ is closed under products and linear combinations.
Note that the class of functions $\poly{0}$ consists only of constants.

Before working with the length spectral data, we analyze how the coefficients in the power series expansion (\ref{hA-triangular}) depend on certain coefficients $\delta_j$ and $a_{ij}$.
We do so by the inductive procedure of substituting an order $\nu$ approximation for $h_A$, $h_B$, $u$, and $\delta$ into (\ref{hA-implicit}) to yield an order $\nu+1$ approximation of $h_A$.
We first need to find the power series of $u$ in terms of that of $M$:

\begin{lemma} \label{lem:u-series}
    Let the power series of $u$ be
    $$ u(h_A,h_B) = \sum_{i,j} u_{ij} h_A^i h_B^j. $$
    Then, for integers $ i \geq 1 $ and $ j \geq 0 $,
    \begin{equation} \label{u-coef-formula}
        u_{i-1,j} = 2i a_{ij} + \poly{i+j-1}.
    \end{equation}
\end{lemma}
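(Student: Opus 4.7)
My plan is to first express $u_{i-1,j}$ in terms of the $(\eta_A,\eta_B)$-series coefficients $\hat{a}_{i'j'}$ of $M$, then convert to the $(h_A, h_B)$-series coefficients $a_{i'j'}$ of $\tilde{M} = M \circ \Psi^{-1}$. Throughout, I track which indices can appear in correction terms, and then invert inductively on $i+j$ to replace $\hat{a}$'s by $a$'s.

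Writing $M = \sum \hat{a}_{i'j'}\,\eta_A^{i'}\eta_B^{j'}$ with the normalizations $\hat{a}_{00} = 0$ and $\hat{a}_{10} = \hat{a}_{01} = \xi_\infty$ (the latter because $\pderiv{M}{\eta_A}(0,0) = \xi_A|_0 = \xi_\infty$), define $f := \bigl(\pderiv{M}{\eta_A}\bigr)^2/\xi_\infty^2 - 1$. By Corollary~\ref{cor:q-function}, $u = f \circ \Psi^{-1}$. Extracting the coefficient of $\eta_A^{i-1}\eta_B^j$ from $f$ for $i+j\geq 2$, the cross term with $\hat{a}_{10}$ in the square contributes $2i\,\hat{a}_{ij}/\xi_\infty$ linearly in $\hat{a}_{ij}$; every other contributing product $\hat{a}_{i'j'}\,\hat{a}_{i''j''}$ satisfies $i'+i''=i+1$, $j'+j''=j$ with both $i'+j', i''+j''\geq 2$, forcing each index sum to be $\leq i+j-1$. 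Thus $f_{i-1,j} = 2i\,\hat{a}_{ij}/\xi_\infty + Q_{ij}$ with $Q_{ij}$ polynomial in $\hat{a}_{i'j'}$ of index sum $\leq i+j-1$. Since $D\Psi(0,0) = \xi_\infty I$ by (\ref{Psi-deriv}), $\Psi^{-1}(h_A,h_B) = (h_A/\xi_\infty, h_B/\xi_\infty)$ plus corrections whose order-$r$ coefficients depend polynomially on $\hat{a}_{i'j'}$ with $i'+j'\leq r$. Taking the coefficient of $h_A^{i-1}h_B^j$ in $f \circ \Psi^{-1}$, the leading substitution contributes $f_{i-1,j}/\xi_\infty^{i+j-1}$ (with no cross-contamination among monomials of the same total degree), while every other contribution uses at least one positive-degree correction of $\Psi^{-1}$ and therefore only involves $\hat{a}_{i'j'}$ with $i'+j'\leq i+j-1$. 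The same expansion applied to $\tilde{M} = M \circ \Psi^{-1}$ gives $a_{ij} = \hat{a}_{ij}/\xi_\infty^{i+j} + S_{ij}$ with $S_{ij}$ polynomial in $\hat{a}_{i'j'}$ of index sum $\leq i+j-1$.

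Combining these two expansions, $u_{i-1,j} = 2i\,\hat{a}_{ij}/\xi_\infty^{i+j} + R_{ij}$ with $R_{ij}$ polynomial in $\hat{a}_{i'j'}$ of index sum $\leq i+j-1$. Inducting on $i+j$ to invert the relation between $\hat{a}_{i'j'}$ and $a_{i'j'}$, each $\hat{a}_{i'j'}$ with $i'+j' \leq i+j-1$ lies in $\poly{i+j-1}$ and $\hat{a}_{ij}/\xi_\infty^{i+j} = a_{ij} + \poly{i+j-1}$; substituting yields $u_{i-1,j} = 2i\,a_{ij} + \poly{i+j-1}$, as required. The main delicacy is the bound $i'+j' \leq r$ on $\hat{a}$-indices in the order-$r$ coefficients of $\Psi^{-1}$, which I expect to verify by a short induction from $\Psi \circ \Psi^{-1} = \id$: since $\Psi(\eta_A, \eta_B) = \parens{\eta_A\,\pderiv{M}{\eta_A}, \eta_B\,\pderiv{M}{\eta_B}}$ satisfies the analogous bound on its own coefficients, the recursive equation determining the order-$r$ coefficient of $\Psi^{-1}$ involves only $\hat{a}_{i'j'}$ with $i'+j' \leq r$.
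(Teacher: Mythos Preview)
Your proposal is correct and takes essentially the same approach as the paper: express the coefficient $u_{i-1,j}$ through the $(\eta_A,\eta_B)$-series coefficients $\hat a_{ij}$ of $M$, isolate the leading contribution $2i\,\hat a_{ij}$, and then convert $\hat a_{ij}$ to $a_{ij}$ using the diffeomorphism $\Psi$ while tracking that all corrections land in $\poly{i+j-1}$. The only cosmetic difference is the order of operations---you square $\partial M/\partial\eta_A$ first (forming your $f$) and then compose with $\Psi^{-1}$, whereas the paper composes first (obtaining $v$) and squares afterward to reach $u$; your $\hat a \leftrightarrow a$ transfer is exactly the analogue of the paper's equations (\ref{a-coef-same})--(\ref{r-coef-same}).
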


\begin{proof}
    We first consider the power series of $ v \circ \Psi^{-1} $,
    $$ (v \circ \Psi)(\eta_A,\eta_B) = \sum_{i,j} \hat{v}_{ij} \eta_A^i \eta_B^j. $$
    Then, $ \xi_A = (v \circ \Psi)(\eta_A,\eta_B) $, so it can be directly computed by (\ref{M-def}) that
    $$ \hat{v}_{i-1,j} = i\hat{a}_{ij}. $$
    We now claim that for all $ (i,j) \neq (0,0) $,
    \begin{equation} \label{a-coef-same}
        \hat{a}_{ij} = \xi_\infty^{i+j} a_{ij} + \poly{i+j-1}
    \end{equation}
    and
    \begin{equation} \label{r-coef-same}
        \hat{v}_{ij} = \xi_\infty^{i+j} v_{ij} + \poly{i+j},
    \end{equation}
    where the power series of $v$ is
    $$ v(h_A,h_B) = \sum_{i,j} v_{ij} h_A^i h_B^j. $$
    By Corollary \ref{cor:q-function}, $ h_A = \eta_A v(h_A,h_B) $ and $ h_B = \eta_B v(h_B,h_A) $, so we may write
    $$ \sum_{i,j} \hat{a}_{ij} \eta_A^i \eta_B^j = M(\eta_A,\eta_B) = \sum_{k,\ell} a_{k\ell} h_A^k h_B^\ell = \sum_{k,\ell} a_{k\ell} \eta_A^k \eta_B^\ell v(h_A,h_B)^k v(h_B,h_A)^\ell. $$
    We may expand $v(h_A,h_B)$ and $v(h_B,h_A)$ into power series in $\eta_A$ and $\eta_B$, and each will have constant term $\xi_\infty$.
    Therefore, in equating the coefficients of $ \eta_A^i \eta_B^j $ on both sides, all terms on the right side where $ k > i $ or $ \ell > j $ vanish and we have (\ref{a-coef-same}), where $ \xi_\infty^{i+j} a_{ij} $ arises from $ (k,\ell) = (i,j) $ and $\poly{i+j-1}$ encompasses all other terms.
    The equation (\ref{r-coef-same}) can be proven similarly.
    
    We have proven that, for integers $ i \geq 1 $ and $ j \geq 0 $,
    $$ v_{i-1,j} = \xi_\infty i a_{ij} + \poly{i+j-1}. $$
    By the formula (\ref{u-def}), for $ (i,j) \neq (0,0) $, we have
    $$ u_{i-1,j} = \frac{1}{\xi_\infty^2} \sum_{k=0}^{i-1} \sum_{\ell=0}^j v_{k\ell} v_{i-1-k,j-\ell} = \sum_{k=0}^{i-1} \sum_{\ell=0}^j (k+1)(i-k) a_{k+1,\ell} a_{i-k,j-\ell} + \poly{i+j-1}. $$
    All terms of this double summation are in $\poly{i+j-1}$ except those where $ (k,\ell) = (0,0) $ or $ (k,\ell) = (i-1,j) $, so this simplifies to (\ref{u-coef-formula}).
\end{proof}

As we found in the proof of Proposition \ref{prop:h-triangular}, $ 1 + u(h_A,h_B) $ and $ \parens{1 + \delta(h_A)}^{2m} $ are both triangular power series, so the formula (\ref{hA-implicit}) determines coefficients of $h_A$ via a quadruple summation, i.e.,
\begin{equation} \label{hA-4-sum}
    \hc{p+k,q+\ell}{k\ell} = \sum_{r=0}^{p-1} \sum_{s=0}^q \sum_{\alpha=0}^k \sum_{\beta=0}^\ell \coef{1 + u(h_A,h_B)}{r+\alpha,s+\beta}{\alpha\beta} \coef{\parens{1 + \delta(h_A)}^{2m}}{(p-1-r)+(k-\alpha), (q-s)+(\ell-\beta)}{k-\alpha, \ell-\beta}.
\end{equation}
We now use this formula to determine how certain coefficients $\hc{p+k,q}{k0}$ depend on certain coefficients in the power series expansions of $\mu$ and $\tilde{M}$.
This will require computations in which we construct the components of this formula and analyze how each component depends on the power series of $\mu$ and $M$.
We consider certain low order coefficients of this power series separately:

\begin{lemma} \label{lem:h-order-1}
    The following hold:
    \begin{itemize}
        \item $ \hc{10}{00} = 1 $
        \item $ \hc{20}{10} = \coef{\parens{1 + \delta(h_A)}^{2m}}{10}{10} = 2\delta_1 $
    \end{itemize}
\end{lemma}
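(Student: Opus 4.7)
The plan is to apply the quadruple sum formula (\ref{hA-4-sum}) directly in the two relevant index cases, exploiting strict triangularity of $u(h_A,h_B)$ (via property (T3)) together with the first-order asymptotics of $h_A$ already established at the base of the induction in the proof of Proposition \ref{prop:h-triangular}.

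First, for $\hc{10}{00} = 1$: this is simply the constant polynomial $\coefp{h_A}{10}$, which is constant by strict triangularity. Reading it off from the first-order approximation $h_A = z_A + o\parens{\norm{(z_A,z_B)}}$ derived in the base case of the induction in Proposition \ref{prop:h-triangular} gives $\hc{10}{00} = 1$ immediately.

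Next, for the identity $\hc{20}{10} = \coef{(1+\delta(h_A))^{2m}}{10}{10}$, I specialize (\ref{hA-4-sum}) to $p=k=1$, $q=\ell=0$. The summation indices collapse to $r=s=\beta=0$ and $\alpha \in \set{0,1}$, so
\begin{equation*}
    \hc{20}{10} = \coef{1+u(h_A,h_B)}{00}{00}\,\coef{(1+\delta(h_A))^{2m}}{10}{10} + \coef{1+u(h_A,h_B)}{10}{10}\,\coef{(1+\delta(h_A))^{2m}}{00}{00}.
\end{equation*}
The first factor of the first term is $1$ (constant term of $1+u$), and the second factor of the second term is $1$ (constant term of $(1+\delta(h_A))^{2m}$), so it suffices to show that $\coef{1+u(h_A,h_B)}{10}{10} = 0$. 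Since $h_A$ and $h_B$ are both strictly triangular in both variables by Proposition \ref{prop:h-triangular}, property (T3) applied to the analytic function $u$ yields that $u(h_A,h_B)$ is strictly triangular in both, hence $\deg \coefp{u(h_A,h_B)}{10} \leq (0,0)$; the coefficient of $m^1$ therefore vanishes, and the first equality is established.

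Finally, for the equality $\coef{(1+\delta(h_A))^{2m}}{10}{10} = 2\delta_1$, I expand via the binomial series $(1+\delta(h_A))^{2m} = \sum_{i \geq 0} \binom{2m}{i}\delta(h_A)^i$. Since $\delta(0)=0$ and $h_A$ is a multiple of $z_A$, the power $\delta(h_A)^i$ is a multiple of $z_A^i$ and hence contributes nothing to the coefficient of $z_A^1 z_B^0$ for $i \geq 2$. Only the $i=1$ term survives, giving $\coefp{(1+\delta(h_A))^{2m}}{10} = 2m\,\delta_1 \coefp{h_A}{10} = 2m\,\delta_1$ (using $\hc{10}{00}=1$ from the first claim), and extracting the coefficient of $m^1$ yields $2\delta_1$. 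The whole argument is a short bookkeeping computation in the algebra of triangular series; I do not foresee any genuine obstacle, the one point worth flagging being the use of strict triangularity to annihilate the $(\alpha,\beta)=(1,0)$ term so that the entire contribution comes from the $(1+\delta(h_A))^{2m}$ factor.
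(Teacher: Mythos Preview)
Your proposal is correct and follows essentially the same approach as the paper: both specialize the quadruple-sum formula (\ref{hA-4-sum}) at $(p,q,k,\ell)=(1,0,1,0)$, kill the $u$-term via strict triangularity, expand $(1+\delta(h_A))^{2m}$ binomially, and use that $\delta(h_A)^i$ is a multiple of $z_A^i$ to isolate the $i=1$ contribution. The only cosmetic difference is that you cite property (T3) explicitly for the strict triangularity of $u(h_A,h_B)$, whereas the paper simply asserts it.
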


\begin{proof}
    The value $ \hc{10}{00} = 1 $ follows directly from the fact that $ h_A \sim z_A $ for large $m$ and $n$.
    
    As a special case of the formula (\ref{hA-4-sum}),
    $$ \hc{20}{10} = \coef{1 + u(h_A,h_B)}{00}{00} \coef{\parens{1 + \delta(h_A)}^{2m}}{10}{10} + \coef{1 + u(h_A,h_B)}{10}{10} \coef{\parens{1 + \delta(h_A)}^{2m}}{00}{00}. $$
    It is easy to see that $ \coef{1 + u(h_A,h_B)}{00}{00} = \coef{\parens{1 + \delta(h_A)}^{2m}}{00}{00} = 1 $, and since $u(h_A,h_B)$ is a strictly triangular power series, $ \coef{1 + u(h_A,h_B)}{10}{10} = 0 $.
    Therefore, we may write
    $$ \hc{20}{10} = \coef{\parens{1 + \delta(h_A)}^{2m}}{10}{10} = \sum_{i=1}^\infty \coef{\binom{2m}{i} \delta(h_A)^i}{10}{10}. $$
    However, $\delta(h_A)^i$ is a multiple of $z_A^i$, and any multiple of $z_A^i$ for $ i \geq 2 $ must have a coefficient of zero in front of $ mz^A $, so only the term $i=1$ survives, yielding
    $$ \hc{20}{10} = \coef{2m \delta(h_A)}{10}{10} = 2 \coef{\delta(h_A)}{10}{00}. $$
    The result follows from the first-order approximation $ \delta(h_A) = \delta_1 h_A + o(h_A) = \delta_1 z_A + o(z_A) $.
\end{proof}

\begin{lemma} \label{lem:hc}
    For all integers $ p \geq 1 $ and $ q \geq 0 $ with $ (p,q) \neq (1,0) $,
    \begin{equation} \label{hc-pq-00}
        \hc{pq}{00} = 2p a_{pq} + \poly{p+q-1},
    \end{equation}
    and for all integers $ p \geq 2 $,
    \begin{equation} \label{hc-p0-10}
        \hc{p+1,0}{10} = 2\delta_p + 4p(p+1) \delta_1 a_{p0} + \poly{p-1}.
    \end{equation}
\end{lemma}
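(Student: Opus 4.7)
The plan is to prove both identities by induction, specializing the quadruple-sum formula (\ref{hA-4-sum}) and tracking the membership of each intermediate coefficient in the $\poly{\nu}$ hierarchy. Two structural facts control the whole computation: $\delta(h_A)$ is a multiple of $z_A$, and $\binom{2m}{i}$ vanishes at $m=0$ for $i\geq 1$; together they yield $\coef{(1+\delta(h_A))^{2m}}{ab}{00} = \delta_{a,0}\delta_{b,0}$, which collapses large portions of the quadruple sum.

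For (\ref{hc-pq-00}), I induct on $p+q$ and specialize (\ref{hA-4-sum}) to $k=\ell=0$. The collapse above reduces the identity to $\hc{pq}{00} = \coef{u(h_A,h_B)}{p-1,q}{00}$. Expanding $u=\sum u_{ij}h_A^i h_B^j$ and restricting to the $m^0 n^0$ part replaces each $h_A, h_B$ by its $m^0 n^0$-part $A_0$ or $B_0$; since these are multiples of $z_A$ or $z_B$ respectively with leading coefficient $1$ (by Lemma \ref{lem:h-order-1}), only $(i,j)=(p-1,q)$ contributes at top degree, producing $u_{p-1,q}=2p\,a_{pq}+\poly{p+q-1}$ by Lemma \ref{lem:u-series}. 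Every other $(i,j)$ has $u_{ij}\in\poly{i+j+1}$ and combinatorial factors built from inductively-controlled $\hc{cd}{00}\in\poly{c+d}$, placing the remainder in $\poly{p+q-1}$.

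For (\ref{hc-p0-10}), I induct on $p$ with base case $p=2$ computed directly, and specialize (\ref{hA-4-sum}) to $k=1$, $\ell=0$, $q=0$. The sum splits by $\alpha\in\{0,1\}$. The $\alpha=1$ piece collapses (again by the $\delta$-argument) to $\coef{u(h_A,h_B)}{p,0}{10}$; restricting $z_B=0$ forces $j=0$ in $u=\sum u_{ij}h_A^i h_B^j$, so writing $g(m,z_A)$ for the $z_B^0$-part of $h_A$ I compute $\sum_i u_{i,0}[mz_A^p]g^i$, where $[mz_A^p]g^i$ is a sum over compositions $p_1+\cdots+p_i=p$ with exactly one factor contributing its $m$-linear coefficient $\hc{p_k,0}{10}$ and the rest contributing $\hc{p_j,0}{00}$. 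Only $i=p-1$ is top-degree (forcing one $p_k=2$ and invoking $\hc{2,0}{10}=2\delta_1$), pairing with $u_{p-1,0}=2p\,a_{p,0}+\poly{p-1}$ to yield $4p(p-1)\delta_1 a_{p,0}+\poly{p-1}$. The $\alpha=0$ piece reads $\sum_{r=0}^{p-1}\coef{1+u}{r,0}{00}\,\coef{(1+\delta)^{2m}}{p-r,0}{10}$; isolating the $i=1$ summand of $\sum_i\binom{2m}{i}\delta(h_A)^i$ gives $\coef{(1+\delta)^{2m}}{N,0}{10}=2\delta_N+4N\delta_1 a_{N,0}+\poly{N-1}$ for $N\geq 2$ (and $2\delta_1$ for $N=1$). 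The $r=0$ summand then contributes $2\delta_p+4p\delta_1 a_{p,0}+\poly{p-1}$, the $r=p-1$ summand contributes $\hc{p,0}{00}\cdot 2\delta_1=4p\delta_1 a_{p,0}+\poly{p-1}$, intermediate $r$ lies in $\poly{p-1}$, and summing produces $2\delta_p+\bigl[4p(p-1)+4p+4p\bigr]\delta_1 a_{p,0}+\poly{p-1}=2\delta_p+4p(p+1)\delta_1 a_{p,0}+\poly{p-1}$.

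The principal obstacle is the degree bookkeeping: at every intermediate $i$ or $r$ I must verify that products of coefficients drawn from distinct factors stay inside $\poly{p-1}$ rather than spilling into $\poly{p}$. This rests on the sharp composition bound $p_j\leq p-i+1$ for $[mz_A^p]g^i$, the inductive hypothesis $\hc{p_k,0}{10}\in\poly{p_k-1}$ of the second claim (applied most delicately when $i=1$ and $p_k=p$), the first claim's $\hc{p_j,0}{00}\in\poly{p_j}$, and closure of $\poly{\nu}$ under sums and products.
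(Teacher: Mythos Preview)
Your proposal is correct and follows essentially the same approach as the paper: both specialize (\ref{hA-4-sum}), exploit the collapse $\coef{(1+\delta(h_A))^{2m}}{rs}{00}=[(r,s)=(0,0)]$, invoke Lemma~\ref{lem:u-series} for the top-degree term, and control the remaining contributions via $\poly{\nu}$-bookkeeping on compositions. The only organizational difference is that the paper runs a single simultaneous induction on $\nu$ (packaging the power bound $\coef{h_A^i h_B^j}{pq}{00}\in\poly{p+q+1-i-j}$ as the auxiliary Lemma~\ref{lem:h-pow-order} and listing intermediate identities (a)--(f)), whereas you separate the two claims into consecutive inductions and derive those bounds inline.
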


To prove this lemma, we show that $\hc{pq}{00}$ depends only on $a_{pq}$ and lower order coefficients in (\ref{mu-series}) and (\ref{M-series-h}) and that $\hc{p+1,0}{10}$ depends only on $\delta_p$, $a_{p0}$, and lower order coefficients. These lower order coefficients are encompassed in $\poly{p+q-1}$ and $\poly{p-1}$, respectively. We find explicitly the dependence of these on $\delta_p$ and $a_{p0}$.
We will make frequent use of the following result, which we prove simultaneously:

\begin{lemma} \label{lem:h-pow-order}
    For all integers $ p \geq 1 $, $ q \geq 0 $, $ i \leq p $, and $ j \leq q $,
    \begin{equation} \label{h-pow-order}
        \coef{h_A^i h_B^j}{pq}{00} \in \poly{p+q+1-i-j} \textand \coef{h_A^i h_B^j}{p+1,q}{10} \in \poly{p+q+1-i-j}.
    \end{equation}
\end{lemma}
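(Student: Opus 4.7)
The plan is to prove Lemma \ref{lem:h-pow-order} jointly with Lemma \ref{lem:hc} via an outer induction on $p + q$: at each stage one first establishes Lemma \ref{lem:hc} and then uses it to deduce Lemma \ref{lem:h-pow-order}. For Lemma \ref{lem:h-pow-order} itself, the plan is a secondary induction on $i + j$. The base case $i + j \leq 1$ follows directly from Lemma \ref{lem:hc} (and from its $h_B$-analogue via the symmetric form in Proposition \ref{prop:h-triangular}): for example, $\coef{h_A}{pq}{00} = \hc{pq}{00} = 2p\,a_{pq} + \poly{p+q-1} \in \poly{p+q}$, which matches the claimed $\poly{p+q+1-i-j}$ at $(i, j) = (1, 0)$.

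For the inductive step with $i + j \geq 2$, assume without loss of generality $i \geq 1$, factor $h_A^i h_B^j = h_A \cdot h_A^{i-1} h_B^j$, and expand via convolution:
\[
\coef{h_A^i h_B^j}{pq}{00} = \sum_{r + r' = p,\; s + s' = q} \coef{h_A}{rs}{00}\,\coef{h_A^{i-1}h_B^j}{r's'}{00}.
\]
Surviving summands satisfy $r \geq 1$ (since $h_A$ is a multiple of $z_A$) and $r' \geq i-1$, $s' \geq j$ (since $h_A^{i-1} h_B^j$ is a multiple of $z_A^{i-1} z_B^j$). The base case gives $\coef{h_A}{rs}{00} \in \poly{r+s}$, and the secondary inductive hypothesis gives $\coef{h_A^{i-1}h_B^j}{r's'}{00} \in \poly{r'+s'+2-i-j}$. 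Closure of the $\poly{\nu}$ classes under products puts each summand in $\poly{\max(r+s,\,r'+s'+2-i-j)}$; the constraint $r'+s' \geq i+j-1$ forces $r+s \leq p+q+1-i-j$, while $r \geq 1$ forces $r'+s'+2-i-j \leq p+q+1-i-j$, so both arguments are $\leq p+q+1-i-j$, and closure under sums yields the $00$-subscript claim. The $10$-subscript claim follows from the same factorization, now distributing the single $m^1$ factor between the two pieces.

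The main obstacle is the base case of the $10$-subscript induction: one needs $\hc{rs}{10} \in \poly{r+s-1}$ for general $r \geq 2,\, s \geq 0$, whereas Lemma \ref{lem:hc} supplies this only at $s = 0$. I would handle this by folding an auxiliary estimate on $\hc{rs}{10}$ into the outer induction on $p+q$, using the identity (\ref{hA-4-sum}) with $k = 1$, $\ell = 0$ to write $\hc{p+1,q}{10}$ as a convolution of coefficients of $1 + u(h_A,h_B)$ and $(1+\delta(h_A))^{2m}$. Extracting the $m^1$-part of $(1+\delta)^{2m} = \sum_i \binom{2m}{i}\delta^i$ kills the $i = 0$ term and reduces the calculation to a scalar combination of $\coef{\delta(h_A)^i}{rs}{00}$ for $i \geq 1$, which are controlled via Lemma \ref{lem:u-series}, the lower-$(p+q)$ instances of Lemma \ref{lem:hc}, and the lower-$(p+q)$ cases of Lemma \ref{lem:h-pow-order} itself. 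With this estimate in hand, the $10$-subscript induction on $i+j$ runs exactly in parallel with the $00$ case.
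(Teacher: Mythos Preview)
Your proposal follows the paper's overall strategy: prove Lemmas~\ref{lem:hc} and~\ref{lem:h-pow-order} together by a joint induction on $\nu = p+q$. The paper differs in how it handles the product $h_A^i h_B^j$: rather than your binary factorization $h_A \cdot h_A^{i-1} h_B^j$ with a secondary induction on $i+j$, it expands all $i+j$ factors at once and uses the combinatorial constraint $p_\mu + q_\mu \geq 1$ for each factor to force $p_\mu + q_\mu \leq \nu + 1 - i - j$, then invokes the already-proven cases of Lemma~\ref{lem:hc} on each factor. Your route is equivalent and arguably cleaner, since the secondary induction on $i+j$ is really just the outer hypothesis in disguise (in your convolution the second factor always sits at $r'+s' \leq p+q-1$).

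You are right to flag the auxiliary estimate $\hc{rs}{10} \in \poly{r+s-1}$ for general $s \geq 0$: the paper's proof needs exactly this for the single factor carrying $k_\mu = 1$ when $q_\mu > 0$, but Lemma~\ref{lem:hc} as stated only supplies it at $s = 0$, and the paper's phrase ``by the cases in Lemma~\ref{lem:hc} which we have already proven'' glosses over this. Your plan to fold a general-$q$ version into the outer induction via (\ref{hA-4-sum}) is the correct fix; the paper's statements (a) and (b) already control the $u$-factor at the current level, so only the $(1+\delta(h_A))^{2m}$ factor needs the extra work you describe. One small slip: Lemma~\ref{lem:u-series} concerns the coefficients $u_{ij}$ of $u$, not $\delta$; for the $\delta(h_A)^i$ contributions you want the expansion $\delta(h_A) = \sum_j \delta_j h_A^j$ together with the lower-level instances of Lemma~\ref{lem:h-pow-order}, while Lemma~\ref{lem:u-series} is what bounds the $\coef{1+u(h_A,h_B)}{rs}{\alpha 0}$ factors.
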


\begin{proof}[Proof of Lemmas \ref{lem:hc} and \ref{lem:h-pow-order}]
    To prove Lemma \ref{lem:hc}, we prove all of the following statements:
    \begin{enumerate}[label = (\alph*)]
        \item $ \coef{1 + u(h_A,h_B)}{p-1,q}{00} = 2p a_{pq} + \poly{p+q-1} $ for $ p \geq 1 $ and $ q \geq 1 $ with $ (p,q) \neq (0,0) $
        \item $ \coef{1 + u(h_A,h_B)}{pq}{10} = 4p(p-1) \delta_1 a_{pq} + \poly{p+q-1} $ for $ p \geq 1 $ and $ q \geq 1 $ with $ (p,q) \neq (0,0) $
        \item $ \hc{pq}{00} = 2p a_{pq} + \poly{p+q-1} $ for $ p \geq 1 $ and $ q \geq 1 $ with $ (p,q) \neq (0,0) $
        \item $ \coef{\delta(h_A)}{p0}{00} = \delta_p + 2p\delta_1 a_{p0} + \poly{p-1} $ for $ p \geq 2 $
        \item $ \coef{\parens{1 + \delta(h_A)}^{2m}}{p0}{10} = 2\delta_p + 4p\delta_1 a_{p0} + \poly{p-1} $ for $ p \geq 2 $
        \item $ \hc{p+1,0}{10} = 2\delta_p + 4p(p+1)\delta_1 a_{p0} + \poly{p-1} $ for $ p \geq 2 $
    \end{enumerate}
    
    We first prove the base case for Lemma \ref{lem:h-pow-order}: $ (p,q) = (1,0) $. To do so, it suffices to prove that
    $$ \coef{h_A^i h_B^j}{k+1,0}{k0} = \begin{cases}
        1, & (i,j) = (1,0) \text{ and } k=0 \\
        2\delta_1, & (i,j) = (1,0) \text{ and } k=1 \\
        0, & (i,j) \neq (1,0).
    \end{cases} $$
    Both values for $ (i,j) = (0,0) $ are direct consequences of Lemma \ref{lem:h-order-1}.
    If $ j \geq 1 $, then $ h_A^i h_B^j $ is a multiple of $z_B^j$, from which $ \coef{h_A^i h_B^j}{k+1,0}{k0} = 0 $ is a direct result.
    If $ i \geq 2 $ and $j=0$, then since $\parens{\frac{h_A}{z_A}}^i$ is a triangular power series, we have $ \coef{h_A^i}{k+1,0}{k0} = \coef{\parens{\frac{h_A}{z_A}}^i}{k+1-i,0}{k0} $, which is zero since $ k+1-i < k $.
    This completes the proof of Lemma \ref{lem:h-pow-order} when $ (p,q) = (1,0) $.
    
    Now consider any integer $ \nu \geq 2 $, and assume that (a)-(c) are true for $ p+q \leq \nu-1 $, (d)-(f) are true for $ p \leq \nu-1 $, and Lemma \ref{lem:h-pow-order} is true for $ p+q \leq \nu-1 $. Note that in particular, our inductive hypothesis implies that all quantities in (a)-(f) for such $p$ and $q$ are in $\poly{\nu-1}$.
    We will prove that (a)-(c) are true for $ p+q = \nu $, (d)-(f) are true for $ p = \nu $, and Lemma \ref{lem:h-pow-order} holds for $ p+q = \nu $.
    
    Let $ p \geq 1 $ and $ q \geq 0 $ be such that $ p+q = \nu $.
    We expand
    $$ \coef{1 + u(h_A,h_B)}{p-1+k,q}{k0} = \sum_{i,j} u_{ij} \coef{h_A^i h_B^j}{p-1+k,q}{k0}. $$
    The quantity $ h_A^i h_B^j $ is a multiple of $ z_A^i z_B^j $, so if $ i > p-1+k $ or $ j > q $, then its coefficient in front of $ m^k z_A^{p-1+k} z_B^q $ must be zero. This allows us to restrict the sum to $ i \leq p-1+k $ and $ j \leq q $.
    Also, $ \coef{h_A^i h_B^j}{p-1+k,q}{k0} = \coef{\parens{\frac{h_A}{z_A}}^i \parens{\frac{h_B}{z_B}}^j}{p-1+k-i,q-j}{k0} $ and $\frac{h_A}{z_A}$ and $\frac{h_B}{z_B}$ are triangular power series, so this coefficient can only be nonzero if $ p-1+k-i \geq k $, or equivalently, if $ i \leq p-1 $. We may then further restrict this sum to
    $$ \coef{1 + u(h_A,h_B)}{p-1+k,q}{k0} = \sum_{i=0}^{p-1} \sum_{j=0}^q u_{ij} \coef{h_A^i h_B^j}{p-1+k,q}{k0}. $$
    By Lemma \ref{lem:h-pow-order}, $ \coef{h_A^i h_B^j}{p-1+k,q}{k0} \in \poly{\nu-1} $ for all terms in this sum, and by definition $ u_{ij} \in \poly{i+j+1} \subseteq \poly{\nu-1} $ for all terms except that where $ (i,j) = (p-1,q) $. Therefore, applying Lemma \ref{lem:u-series},
    $$ \coef{1 + u(h_A,h_B)}{p-1+k,q}{k0} = u_{p-1,q} \coef{h_A^{p-1} h_B^q}{p-1+k,q}{k0} + \poly{\nu-1} = 2p a_{pq} \coef{h_A^{p-1} h_B^q}{p-1+k,q}{k0} + \poly{\nu-1}. $$
    For $k=0$, the result $ \coef{h_A^{p-1} h_B^q}{p-1,q}{00} = 1 $ follows from the fact that $ h_A \sim z_A $ and $ h_B \sim z_B $, proving (a). Using Lemma \ref{lem:h-order-1}, we may compute
    $$ \coef{h_A^{p-1} h_B^q}{pq}{10} = (p-1) \coef{h_A}{20}{10} \parens{\coef{h_A}{10}{00}}^{p-2} \parens{\coef{h_B}{01}{00}}^q = 2(p-1)\delta_1, $$
    which proves (b).
    
    Now consider the statement (c). When $k=0$ and $\ell=0$, the decomposition (\ref{hA-4-sum}) takes the form
    $$ \hc{pq}{00} = \sum_{r=0}^{p-1} \sum_{s=0}^q \coef{1 + u(h_A,h_B)}{rs}{00} \coef{\parens{1 + \delta(h_A)}^{2m}}{p-1-r, q-s}{00}. $$
    However, if we expand
    $$ \parens{1 + \delta(h_A)}^{2m} = 1 + \sum_{i=1}^\infty \binom{2m}{i} \delta(h_A)^i, $$
    then $m$ will be a factor of $\binom{2m}{i}$ for all $i$, so
    \begin{equation} \label{delta-fac-rs-00}
        \coef{\parens{1 + \delta(h_A)}^{2m}}{rs}{00} = \begin{cases} 1, & (r,s) = (0,0) \\ 0, & (r,s) \neq (0,0). \end{cases}
    \end{equation}
    Therefore, only one term in the above double summation for $\hc{pq}{00}$ survives, and $ \hc{pq}{00} = \coef{1 + u(h_A,h_B)}{p-1,q}{00} $. The statement (c) then follows from (a).
    
    Now let $p=\nu$.
    To prove (d), expand
    $$ \coef{\delta(h_A)}{p0}{00} = \sum_{j=1}^p \delta_j \coef{h_A^j}{p0}{00}. $$
    By Lemma \ref{lem:h-pow-order}, $ \coef{h_A^j}{p0}{00} \in \poly{p-1} $ for $ j \geq 2 $, and by definition, $ \delta_j \in \poly{p-1} $ for $ j \leq p-1 $. Therefore,
    $$ \coef{\delta(h_A)}{p0}{00} = \delta_1 \coef{h_A}{p0}{00} + \delta_p \coef{h_A^p}{p0}{00} + \poly{p-1} = \delta_p + \delta_1 \hc{p0}{00} + \poly{p-1}. $$
    The statement (d) then follows from (c).
    
    To prove (e), first expand
    \begin{equation} \label{hc-pf-e}
        \coef{\parens{1 + \delta(h_A)}^{2m}}{p0}{10} = \sum_{i=1}^p \coef{\binom{2m}{i} \delta(h_A)^i}{p0}{10},
    \end{equation}
    where the upper bound of $p$ is due to the fact that $\delta(h_A)^i$ is a multiple of $z_A^i$, and hence the coefficient in front of $ m z_A^p $ of any multiple of $\delta(h_A)^i$ must be zero whenever $ i > p $.
    The binomial coefficient $\binom{2m}{i}$ is a polynomial in $m$ with zero constant term, so if $\coef{\binom{2m}{i}}{00}{10}$ denotes its coefficient in front of $m$, then
    $$ \coef{\binom{2m}{i} \delta(h_A)^i}{p0}{10} = \coef{\binom{2m}{i}}{00}{10} \coef{\delta(h_A)^i}{p0}{00} = \frac{2 (-1)^{i-1}}{i} \coef{\delta(h_A)^i}{p0}{00}. $$
    By Lemma \ref{lem:h-pow-order}, $ \coef{\delta(h_A)^i}{p0}{00} \in \poly{p-1} $ for $ i \geq 2 $, so only the term $i=1$ in (\ref{hc-pf-e}) survives and
    $$ \coef{\parens{1 + \delta(h_A)}^{2m}}{p0}{10} = 2 \coef{\delta(h_A)}{p0}{00}. $$
    The statement (e) then follows from (d).
    
    To prove (f), use a special case of the quadruple summation formula (\ref{hA-4-sum}):
    \begin{align*}
        \hc{p+1,0}{10} & = \sum_{r=0}^{p-1} \coef{1 + u(h_A,h_B)}{r+1,0}{10} \coef{\parens{1 + \delta(h_A,h_B)}^{2m}}{p-1-r,0}{00} \\
        & \qquad + \sum_{r=0}^{p-1} \coef{1 + u(h_A,h_B)}{r0}{00} \coef{\parens{1 + \delta(h_A,h_B)}^{2m}}{p-r,0}{10}.
    \end{align*}
    By (\ref{delta-fac-rs-00}), all terms in the first summation vanish except for that where $ r = p-1 $. In addition, by statements (a) and (e) in the inductive hypothesis, all terms in the second double summation are in $\poly{p-1}$ except those where $ r = 0 $ or $ r = p-1 $.
    Therefore,
    $$ \hc{p+1,0}{10} = \coef{1 + u(h_A,h_B)}{p0}{10} + \coef{\parens{1 + \delta(h_A,h_B)}^{2m}}{p0}{10} + \coef{\parens{1 + \delta(h_A,h_B)}^{2m}}{10}{10} \coef{1 + u(h_A,h_B)}{p-1,0}{00} + \poly{p-1}. $$
    The statement (f) then follows from Lemma \ref{lem:h-order-1}, (a), (b), and (e).
    
    To prove Lemma \ref{lem:h-pow-order} for $ p+q = \nu $, expand
    $$ \coef{h_A^i h_B^j}{p+k,q}{k0} = \sum \hc{p_1+k_1,q_1}{k_1 0} \hc{p_2+k_2,q_2}{k_2 0} \hdots \hc{p_i+k_i,q_i}{k_i 0} \hc{q_{i+1}, p_{i+1} + k_{i+1}}{0 k_{i+1}} \hc{q_{i+2}, p_{i+2} + k_{i+2}}{0 k_{i+2}} \hdots \hc{q_{i+j}, p_{i+j} + k_{i+j}}{0 k_{i+j}}, $$
    where $ k \in \set{0,1} $ and the sum ranges over all tuples of nonnegative integers $(p_1,p_2,\hdots,p_{i+j})$, $(q_1,q_2,\hdots,q_{i+j})$, and $(k_1,k_2,\hdots,k_{i+j})$ where $ p_1 + p_2 + \hdots + p_{i+j} = p $, $ q_1 + q_2 + \hdots + q_{i+j} = q $, and $ k_1 + k_2 + \hdots + k_{i+j} = k $.
    Since $\frac{h_A}{z_A}$ is a triangular power series, we may impose the restrictions that $ p_\mu \geq 1 $ for all $ 1 \leq \mu \leq i $ and $ q_\mu \geq 1 $ for all $ i+1 \leq \mu \leq j $. This implies that $ p_\mu + q_\mu \geq 1 $ for all $\mu$, and hence that $ p_\mu + q_\mu \leq \nu+1-i-j $ for all $\mu$.
    By the cases in Lemma \ref{lem:hc} which we have already proven, this implies $ \hc{p_\mu + k_\mu, q_\mu}{k_\mu 0} \in \poly{\nu+1-i-j} $ for all $ 1 \leq \mu \leq i $ and $ \hc{q_\mu, p_\mu + k_\mu}{0 k_\mu} \in \poly{\nu+1-i-j} $ for all $ i+1 \leq \mu \leq i+j $. Hence, $ \coef{h_A^i h_B^j}{p+k,q}{k0} \in \poly{\nu+1-i-j} $.
\end{proof}

We are now ready to apply Proposition \ref{prop:length-action} to investigate the coefficients in the power series (\ref{length-triangular}) of the length spectral data. To do so, we first need to investigate the function $\Sigma$:

\begin{lemma} \label{lem:Sigma-delta}
    It holds that $ \Sigma(h) = \Oo(h^2) $, and if the power series of $\Sigma$ is expressed as
    \begin{equation} \label{Sigma-series}
        \Sigma(h) = \sum_{j=1}^\infty \hat{\delta}_j h^{j+1},
    \end{equation}
    then
    \begin{equation} \label{delta-hat-1}
        \hat{\delta}_1 = \frac{1}{2} \delta_1
    \end{equation}
    and for all $ j \geq 2 $,
    \begin{equation} \label{Sigma-delta}
        \hat{\delta}_j = \frac{j}{j+1} \delta_j + \poly{j-1}.
    \end{equation}
\end{lemma}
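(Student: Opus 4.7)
The plan is to compute $\Sigma(h)$ directly from the definition (\ref{Sigma-def}) and the power series (\ref{mu-series}), then read off the coefficients $\hat\delta_j$.

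First I would factor out $\log\lambda$ from $\log\mu$. Writing $\log\mu(h)=\log\lambda+\log\bigl(1+\delta(h)\bigr)$, where $\delta(h):=\sum_{j\geq 1}\delta_j h^j$, the contribution $h\log\lambda$ to $h\log\mu(h)$ is exactly cancelled by $\int_0^h\log\lambda\,d\tau$, so
\begin{equation*}
    \Sigma(h)=h\log\bigl(1+\delta(h)\bigr)-\int_0^h\log\bigl(1+\delta(\tau)\bigr)\,d\tau.
\end{equation*}
This form already gives $\Sigma(h)=O(h^2)$: $\Sigma(0)=0$, and differentiating yields $\Sigma'(h)=h\,\mu'(h)/\mu(h)$, which vanishes at $h=0$ since $\mu(0)=\lambda\neq 0$.

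Next, I would expand $\log\bigl(1+\delta(h)\bigr)=\sum_{j\geq 1} f_j h^j$ as a formal power series. Integrating and subtracting termwise gives
\begin{equation*}
    \Sigma(h)=\sum_{j\geq 1}\Bigl(1-\tfrac{1}{j+1}\Bigr)f_j\, h^{j+1}=\sum_{j\geq 1}\tfrac{j}{j+1}\,f_j\,h^{j+1},
\end{equation*}
so comparison with (\ref{Sigma-series}) gives $\hat\delta_j=\tfrac{j}{j+1}f_j$. It remains to identify $f_j$ in terms of the $\delta_k$. Using the expansion $\log(1+x)=\sum_{k\geq 1}\tfrac{(-1)^{k-1}}{k}x^k$ applied to $x=\delta(h)$ and collecting the coefficient of $h^j$, each $f_j$ is a universal polynomial in $\delta_1,\ldots,\delta_j$; the only term involving $\delta_j$ comes from the $k=1$ piece and contributes $\delta_j$, while every other contribution is a polynomial in $\delta_1,\ldots,\delta_{j-1}$ only. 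Hence $f_1=\delta_1$ and $f_j=\delta_j+\poly{j-1}$ for $j\geq 2$, which after multiplication by $\tfrac{j}{j+1}$ yields (\ref{delta-hat-1}) and (\ref{Sigma-delta}).

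No step here is a real obstacle; the only thing to be careful about is checking that the lower-order remainder genuinely lies in $\poly{j-1}$ as defined at the start of Section \ref{sec:recover}, which is immediate since it is a polynomial in $\delta_1,\ldots,\delta_{j-1}$ alone (and in no $a_{ij}$).
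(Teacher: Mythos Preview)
Your proof is correct and follows essentially the same approach as the paper: both arguments identify $\hat\delta_j$ as $\tfrac{j}{j+1}$ times the $j$-th Taylor coefficient of $\log\mu$, and then observe that this coefficient equals $\delta_j$ modulo a polynomial in $\delta_1,\ldots,\delta_{j-1}$. The only cosmetic difference is that the paper phrases this via derivatives of $\log\mu$ at $0$ (writing $\tfrac{d^j}{dh^j}\log\mu = \mu^{(j)}/\mu + D_{j-1}(\mu,\ldots,\mu^{(j-1)})$), whereas you work directly with the formal power series of $\log(1+\delta(h))$ and explicitly cancel the $\log\lambda$ contribution up front; your version is arguably a bit cleaner.
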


\begin{proof}
    Substituting the first order approximation $ \mu(h) = \lambda \, \parens{1 + \delta_1 h + \Oo(h^2)} $ into (\ref{Sigma-def}) yields $ \Sigma(h) = \frac{1}{2} \delta_1 h^2 + \Oo(h^3) $. Hence, $ \Sigma(h) = \Oo(h^2) $ and (\ref{delta-hat-1}) is true.
    
    Now consider $ j \geq 2 $. We may compute from (\ref{Sigma-def}) that
    $$ \hat{\delta}_j = \frac{\Sigma^{(j+1)}(0)}{(j+1)!} = \eval{\frac{1}{j!} \frac{d^j}{\dd h^j} \parens{\log \mu(h)} - \frac{1}{(j+1)!} \frac{d^j}{\dd h^j} \parens{\int_0^h \log \mu(\tau) \dd\tau}}_{h=0} = \eval{\frac{j}{(j+1)!} \frac{d^j}{\dd h^j} \log \mu(h)}_{h=0}. $$
    We may directly compute
    $$ \frac{d^j}{\dd h^j} \log \mu(h) = \frac{d^{j-1}}{\dd h^{j-1}} \parens{\frac{\mu'(h)}{\mu(h)}} = \frac{\mu^{(j)}(h)}{\mu(h)} + D_{j-1}\parens{\mu(h), \mu'(h), \mu''(h), \hdots, \mu^{(j-1)}(h)} $$
    for some function $D_{j-1}$. In particular, $ \mu^{(k)}(0) = k! \lambda \delta_k $ for all $ k \geq 1 $ and $ \mu(0) = \lambda $, so
    $$ \frac{d^j}{\dd h^j} \log \mu(h) = j! \delta_j + \poly{j-1}, $$
    and hence, (\ref{Sigma-delta}) is true.
\end{proof}

We are now ready to determine coefficients in (\ref{length-triangular}).

\begin{lemma} \label{lem:l-order-1}
    It holds that $ \lc{20}{10} = -\delta_1 $.
\end{lemma}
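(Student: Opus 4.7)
The plan is to extract the coefficient $\lc{20}{10}$ directly from the formula (\ref{length-action}) in Proposition \ref{prop:length-action},
$$
\ell_{m,n} = (2m+2n)\ell_0 + 2L_\infty + 2m\Sigma(h_A) + 2n\Sigma(h_B) + h_A + h_B - 2\tilde{M}(h_A, h_B),
$$
by computing the contribution of each summand to the coefficient of $m z_A^2$ (with $n^0 z_B^0$).

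First I would rule out the terms that cannot contribute. By Proposition \ref{prop:h-triangular}, $h_B$ is a multiple of $z_B$, so both $h_B$ and $2n\Sigma(h_B) = 2n\parens{\hat{\delta}_1 h_B^2 + \cdots}$ are multiples of $z_B$ and contribute nothing. The constants $(2m+2n)\ell_0$ and $2L_\infty$ also contribute nothing. In $\tilde{M}(h_A, h_B) = \sum a_{ij} h_A^i h_B^j$, any term with $j \geq 1$ is a multiple of $z_B$ and drops out; and any term with $j=0$ and $i \geq 2$ drops out via the observation that, by the strict triangularity in $z_A$ in Proposition \ref{prop:h-triangular}, $h_A = z_A + \Oo\parens{\norm{(z_A,z_B)}^2}$ has no $m z_A$ monomial, so $h_A^i$ for $i \geq 2$ carries no $m z_A^2$ term either.

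The three surviving contributions are then straightforward. From $2m\Sigma(h_A) = 2m\parens{\hat{\delta}_1 h_A^2 + \cdots}$ the coefficient of $m z_A^2$ reduces to $2\hat{\delta}_1$, which equals $\delta_1$ by Lemma \ref{lem:Sigma-delta}. From $h_A$ itself the coefficient is $\hc{20}{10} = 2\delta_1$ by Lemma \ref{lem:h-order-1}. From $-2 a_{10} h_A$, using $a_{10} = 1$ read off from (\ref{M-order-1}), we obtain $-2 \cdot 2\delta_1 = -4\delta_1$. Summing yields $\lc{20}{10} = \delta_1 + 2\delta_1 - 4\delta_1 = -\delta_1$.

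The main obstacle is really only a bookkeeping issue: verifying that no other products $h_A^i h_B^j$ and no higher-order piece of $\Sigma(h_A)$ can sneak in an $m z_A^2$ monomial. The triangularity framework of Proposition \ref{prop:h-triangular}, combined with the fact that $h_B$ is divisible by $z_B$, makes each of these exclusions essentially automatic, so the identity $\lc{20}{10} = -\delta_1$ falls out as a clean cancellation among the three explicit contributions above.
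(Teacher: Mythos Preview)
Your proof is correct and follows essentially the same route as the paper: both extract $\lc{20}{10}$ from the formula in Proposition~\ref{prop:length-action}, invoke Lemma~\ref{lem:Sigma-delta} for $\hat{\delta}_1=\tfrac{1}{2}\delta_1$, Lemma~\ref{lem:h-order-1} for $\hc{20}{10}=2\delta_1$, and $a_{10}=1$ from (\ref{M-order-1}), and then verify that $h_B$, $h_A^i$ for $i\ge 2$, and the higher $\Sigma$-terms contribute nothing to the $m z_A^2$ coefficient. The only cosmetic difference is that the paper carries the $a_{20}h_A^2$ term along before noting $\coef{h_A^2}{20}{10}=0$, whereas you dismiss it upfront.
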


\begin{proof}
    By the formula (\ref{length-action}),
    $$ \lc{20}{10} = 2 \coef{\Sigma(h_A)}{20}{00} + \coef{h_A}{20}{10} + \coef{h_B}{20}{10} - 2 \coef{\tilde{M}(h_A,h_B)}{20}{10}. $$
    We have $ \tilde{M}(h_A,h_B) = h_A + a_{20} h_A^2 + \Oo(h_A^3 + h_B) $, so $ \coef{\tilde{M}(h_A,h_B)}{20}{10} = \coef{h_A}{20}{10} + a_{20} \coef{h_A^2}{20}{10} $, which implies
    $$ \lc{20}{10} = 2 \coef{\Sigma(h_A)}{20}{00} - \coef{h_A}{20}{10} + \coef{h_B}{20}{10} - 2a_{20} \coef{h_A^2}{20}{10}. $$
    By Lemma \ref{lem:Sigma-delta}, $ \Sigma(h_A) = \frac{1}{2} \delta_1 h_A^2 + \Oo(h_A^3) = \frac{1}{2} \delta_1 z_A^2 + o(z_A^2) $, so $ \coef{\Sigma(h_A)}{20}{00} = \frac{1}{2}\delta_1 $.
    By Lemma \ref{lem:h-order-1}, $ \coef{h_A}{20}{10} = 2\delta_1 $, and clearly $ \coef{h_A^2}{20}{10} = \coef{h_B}{20}{10} = 0 $.
\end{proof}

\begin{lemma} \label{lem:lc}
    For all nonnegative integers $p$ and $q$ with $ p+q \geq 2 $,
    \begin{equation} \label{lc-pq-00}
        \lc{pq}{00} = -2(p+q+1) a_{pq} + \poly{p+q-1},
    \end{equation}
    and for all integers $ p \geq 2 $,
    \begin{equation} \label{lc-p0-10}
        \lc{p+1,0}{10} = -\frac{2}{p+1} \delta_p - 4p(p+1)\delta_1 a_{p0} + \poly{p-1}.
    \end{equation}
\end{lemma}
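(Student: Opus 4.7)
The plan is to extract both coefficients from the formula
$$\ell_{m,n} = (2m+2n)\ell_0 + 2L_\infty + 2m\Sigma(h_A) + 2n\Sigma(h_B) + h_A + h_B - 2\tilde{M}(h_A,h_B)$$
of Proposition \ref{prop:length-action}, substituting the power series (\ref{hA-triangular}), (\ref{hB-triangular}), (\ref{Sigma-series}), and (\ref{M-series-h}) and invoking Lemmas \ref{lem:hc}, \ref{lem:h-pow-order}, and \ref{lem:Sigma-delta} to bound all non-leading contributions modulo the appropriate $\poly{\nu}$.

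For the first identity, I would first observe that $2m\Sigma(h_A)$ and $2n\Sigma(h_B)$ carry explicit factors of $m$ and $n$ and so contribute nothing to $\coef{\cdot}{pq}{00}$. Writing $\tilde{M}(h_A, h_B) = \sum_{i,j} a_{ij} h_A^i h_B^j$ and using that $h_A^i h_B^j$ is a multiple of $z_A^i z_B^j$, only pairs with $i \leq p$, $j \leq q$ can contribute. The three distinguished pairs $(i,j) \in \{(1,0),(0,1),(p,q)\}$ yield $(2(p+q)+1) a_{pq}$ using $a_{10} = a_{01} = 1$ together with Lemma \ref{lem:hc}, while all remaining $(i,j)$ satisfy $2 \leq i+j \leq p+q-1$ and give products $a_{ij} \coef{h_A^i h_B^j}{pq}{00}$ that lie in $\poly{p+q-1}$ by combining $a_{ij} \in \poly{i+j}$ with Lemma \ref{lem:h-pow-order}. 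Adding $\coef{h_A + h_B}{pq}{00} = 2(p+q) a_{pq} + \poly{p+q-1}$ (again Lemma \ref{lem:hc}) and cancelling yields $\lc{pq}{00} = -2(p+q+1) a_{pq} + \poly{p+q-1}$.

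For the second identity, the $h_B$, $\Sigma(h_B)$, and $2n\Sigma(h_B)$ terms all contribute zero (the first two are multiples of $z_B$, and the $m^1$-coefficient of $2n\Sigma(h_B)$ would require an $n^{-1}$ in $\Sigma(h_B)$). I would compute $\coef{2m\Sigma(h_A)}{p+1,0}{10} = 2\coef{\Sigma(h_A)}{p+1,0}{00}$ by splitting $\Sigma(h_A) = \sum_j \hat\delta_j h_A^{j+1}$: terms with $2 \leq j \leq p-1$ are in $\poly{p-1}$ by Lemma \ref{lem:h-pow-order}, the $j=p$ term contributes $\hat\delta_p = \tfrac{p}{p+1}\delta_p + \poly{p-1}$ (Lemma \ref{lem:Sigma-delta}), and the $j=1$ term reduces by multinomial expansion and Lemma \ref{lem:hc} to $\hat\delta_1 (4p a_{p0} + \poly{p-1}) = 2p\delta_1 a_{p0} + \poly{p-1}$. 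Lemma \ref{lem:hc} gives $\coef{h_A}{p+1,0}{10} = \hc{p+1,0}{10}$ directly, and in $\coef{\tilde M(h_A,h_B)}{p+1,0}{10} = \sum_i a_{i0} \coef{h_A^i}{p+1,0}{10}$ (only $j=0$ survives) the $i=1$ term equals $\hc{p+1,0}{10}$ while the $i=p$ term equals $2p\delta_1 a_{p0}$ by a short multinomial analysis (the unique contributing product has one factor $\hc{20}{10} = 2\delta_1$ in the $m$-slot and $p-1$ factors $\hc{10}{00} = 1$, in $p$ arrangements). Substituting and simplifying yields the claimed formula.

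The hardest step will be verifying that the remaining $i$-indices in $\sum_i a_{i0} \coef{h_A^i}{p+1,0}{10}$ contribute only $\poly{p-1}$, in particular the $i=2$ term: since $a_{20} \in \poly{2}$ is not automatically in $\poly{p-1}$ for small $p$, and the naive Lemma \ref{lem:h-pow-order} bound only gives $\coef{h_A^2}{p+1,0}{10} \in \poly{p}$, the product bound is inadequate. I would resolve this by computing $\coef{h_A^2}{p+1,0}{10}$ explicitly, isolating the three multinomial cases $(r_1, r_2) \in \{(2,p-1),(p-1,2),(p,1)\}$ and applying Lemma \ref{lem:hc} to obtain $\coef{h_A^2}{p+1,0}{10} = 4\delta_{p-1} + 8(p^2-1)\delta_1 a_{p-1,0} + \poly{p-1}$, so that after multiplying by $a_{20}$ every term falls in $\poly{p-1}$ for $p \geq 3$. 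A brief separate check handles $p=2$, where $i=2$ coincides with $i=p$ and the contribution $4\delta_1 a_{20}$ folds consistently into the target coefficient $-4p(p+1)\delta_1 a_{p0} = -24\delta_1 a_{20}$.
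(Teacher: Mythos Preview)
Your argument is correct and follows essentially the same route as the paper: both identities are read off from Proposition~\ref{prop:length-action} by expanding $\Sigma(h_A)$, $h_A$, $h_B$, and $\tilde M(h_A,h_B)$ and isolating the three ``extremal'' terms in each sum while absorbing all others into $\poly{\cdot}$ via Lemmas~\ref{lem:hc}, \ref{lem:h-pow-order}, and~\ref{lem:Sigma-delta}.

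One remark: the difficulty you flag in the final paragraph is not real. You write that Lemma~\ref{lem:h-pow-order} only gives $\coef{h_A^2}{p+1,0}{10} \in \poly{p}$, but in the lemma's notation the second statement (applied with the lemma's $p$ equal to your $p$, $q=0$, $i=2$, $j=0$) yields $\coef{h_A^2}{p+1,0}{10} \in \poly{p+0+1-2-0} = \poly{p-1}$ directly. Since $a_{20} \in \poly{2} \subseteq \poly{p-1}$ for every $p \geq 3$, the product $a_{20}\coef{h_A^2}{p+1,0}{10}$ is already in $\poly{p-1}$ with no further work; and for $p=2$ the index $i=2$ coincides with $i=p$, so there is nothing left to check. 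The paper accordingly disposes of all intermediate $2 \leq i \leq p-1$ in one line (``by similar arguments''), and your explicit computation of $\coef{h_A^2}{p+1,0}{10}$ and the separate $p=2$ case, while not wrong, are unnecessary.
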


\begin{proof}
    First, notice that
    $$ \coef{\tilde{M}(h_A,h_B)}{pq}{00} = \sum_{i=0}^p \sum_{j=0}^q a_{ij} \coef{h_A^i h_B^j}{pq}{00}. $$
    By Lemma \ref{lem:h-pow-order},$ \coef{h_A^i h_B^j}{pq}{00} \in \poly{p+q-1} $ for all terms in this summation except those where $ (i,j) = (1,0) $ or $ (i,j) = (0,1) $, and the coefficient $a_{ij}$ itself is in $\poly{p+q-1}$ for all $(i,j)$ except $(p,q)$. Therefore,
    $$ \coef{\tilde{M}(h_A,h_B)}{pq}{00} = a_{10} \coef{h_A}{pq}{00} + a_{01} \coef{h_B}{pq}{00} + a_{pq} \coef{h_A^p h_B^q}{pq}{00} + \poly{p+q-1}, $$
    and using the values $ a_{10} = a_{01} = \coef{h_A^p h_B^q}{pq}{00} = 1 $,
    $$ \coef{\tilde{M}(h_A,h_B)}{pq}{00} = \hc{pq}{00} + \hc{qp}{00} - 2a_{pq} + \poly{p+q-1}. $$
    We then compute
    $$ \lc{pq}{00} = \coef{h_A}{pq}{00} + \coef{h_B}{pq}{00} - 2\coef{\tilde{M}(h_A,h_B)}{pq}{00} = -\hc{pq}{00} - \hc{qp}{00} - 2a_{pq}. $$
    The result (\ref{lc-pq-00}) follows from Lemma \ref{lem:hc} and the fact that $ a_{pq} = a_{qp} $ by symmetry.
    
    By taking the coefficient in front of $ m z_A^{p+1} z_B^q $ in (\ref{length-action}), we have
    \begin{equation} \label{lc-pf-2}
        \lc{p+1,0}{10} = 2 \coef{\Sigma(h_A)}{p+1,0}{00} + \coef{h_A}{p+1,0}{10} - 2 \coef{\tilde{M}(h_A,h_B)}{p+1,0}{10}.
    \end{equation}
    By similar arguments as above,
    $$ \coef{\tilde{M}(h_A,h_B)}{p+1,0}{10} = \hc{p+1,0}{10} + a_{p0} \coef{h_A^p}{p+1,0}{10} + \poly{p-1}. $$
    It was computed in the proof of (b) in Lemma \ref{lem:hc} that $ \coef{h_A^p}{p+1,0}{10} = 2p\delta_1 $, so by this and the value of $\hc{p+1,0}{10}$ in Lemma \ref{lem:hc}, we have
    \begin{equation} \label{lc-pf-2a}
        \coef{\tilde{M}(h_A,h_B)}{p+1,0}{10} = 2\delta_p + 2p(2p+3)\delta_1 a_{p0} + \poly{p-1}.
    \end{equation}
    We now expand
    $$ \coef{\Sigma(h_A)}{p+1,0}{00} = \sum_{j=1}^p \hat{\delta}_p \coef{h_A^{j+1}}{p+1,0}{00}. $$
    By Lemma \ref{lem:h-pow-order}, all coefficients $ \coef{h_A^{j+1}}{p+1,0}{00} $ in this summation are in $\poly{p-1}$ except that for $j=1$. Also, by Lemma \ref{lem:Sigma-delta}, all coefficents $\hat{\delta}_j$ in this summation are in $\poly{p-1}$ except $\hat{\delta}_p$. Therefore,
    $$ \coef{\Sigma(h_A)}{p+1,0}{00} = \hat{\delta}_1 \coef{h_A^2}{p+1,0}{00} + \hat{\delta}_p \coef{h_A^{p+1}}{p+1,0}{00} + \poly{p-1}. $$
    By expanding
    $$ \coef{h_A^2}{p+1,0}{00} = \sum_{k=1}^p \hc{k0}{00} \hc{p+1-k,0}{00} = 2\hc{p0}{00} + \poly{p-1} $$
    and applying Lemma \ref{lem:Sigma-delta},
    \begin{equation} \label{lc-pf-2b}
        \coef{\Sigma(h_A)}{p+1,0}{00} = \delta_1 \hc{p0}{00} + \frac{p}{p+1} \delta_p + \poly{p-1} = \frac{p}{p+1} \delta_p + 2p\delta_1 a_{p0} + \poly{p-1}.
    \end{equation}
    Substituting the value of $ \coef{h_A}{p+1,0}{10} = \hc{p+1,0}{10} $ in Lemma \ref{lem:hc} and the values (\ref{lc-pf-2a}) and (\ref{lc-pf-2b}) into (\ref{lc-pf-2}) completes the proof of (\ref{lc-p0-10}).
\end{proof}

We now have all of the elements needed to prove our main theorems.

\begin{proof}[Proof of Theorem \ref{th:conj}]
    The results of Section \ref{sec:Birkhoff} show that two systems are analytically conjugate to each other near the homoclinic orbit if and only if they have the same Birkhoff normal form $N$ and the same gluing map $G$. Therefore, it suffices to prove that $N$ and $G$ are uniquely determined by the length spectral data. We do so by recovering $\lambda$, $\xi_\infty$, and the entire power series of the functions $\mu$ and $M$ from the length spectral data.

    First, notice that by Corollary \ref{cor:l-UT}, the length spectral data can be expressed as
    $$ \ell_{m,n} = (2m+2n) \ell_0 + \sum_{p,q} \sum_{i=0}^{\max\set{0,p-1}} \sum_{j=0}^{\max\set{0,q-1}} \xi_\infty^{2(p+q)} \lc{pq}{ij} m^i n^j \parens{\lambda^{2m}}^p \parens{\lambda^{2n}}^q. $$
    Hence, $\lambda$ and all of the quantities $ \xi_\infty^{2(p+q)} \lc{pq}{ij} $ can be recovered from the length spectral data.
    By Proposition \ref{prop:l-order-1}, the first order approximation is
    $$ \ell_{m,n} = (2m+2n) \ell_0 + 2L_\infty - \xi_\infty^2 \lambda^{2m} - \xi_\infty^2 \lambda^{2n} + o(\lambda^{2m} + \lambda^{2n}), $$
    so the quantity $\xi_\infty^2$ can be recovered, and therefore, so can $\lc{pq}{ij}$ for every $p$, $q$, $i$, and $j$.
    
    It remains is to show that all coefficients $\delta_j$ and $a_{ij}$ can be recovered from the coefficients $\lc{pq}{ij}$.
    First off, we can recover $\delta_1$ from the formula in Lemma \ref{lem:l-order-1}, and the first order approximation (\ref{M-order-1}) gives the values $ a_{10} = a_{01} = 1 $.
    
    Consider a positive integer $ \nu \geq 2 $.
    Assuming we know the values of $\delta_j$ for $ j \leq \nu-1 $ and of $a_{ij}$ for $ i+j \leq \nu-1 $, any function in $\poly{\nu-1}$ has a known value.
    Lemma \ref{lem:lc} then allows us to solve for the values of $\delta_\nu$ and $a_{pq}$ for $ p+q = \nu $ given the coefficients $\lc{p+1,0}{10}$ and $\lc{pq}{00}$ for $ p+q \leq \nu $.
    Hence, all coefficients $\delta_j$ and $a_{ij}$ can be recovered by induction on $\nu$.
    
    Finally, all parts of the formula in Proposition \ref{length-action} can be determined only by the functions $M$ and $\mu$, so if two systems have the same normal form $N$ and gluing map $G$, then their length spectral data $\ell_{m,n}$ will be identical.
\end{proof}

\begin{proof}[Proof of Theorem \ref{th:fixed}]
    The regions $D_1$ and $D_2$ determine the collision maps $\cmap_{12}$ and $\cmap_{21}$, which in turn determine the normal form $N$ and conjugacies $\Phi_1$ and $\Phi_2$ by Proposition \ref{prop:BNF}.
    
    By Theorem \ref{th:conj}, from the length spectral data $\ell_{m,n}$, we can recover the Birkhoff coordinates $ \parens{\xi_k^{m,n}, \eta_k^{m,n}} $ of each point $x_k^{m,n}$ in the cyclicity-$2$ orbits for large enough $m$ and $n$.
    However, since we know $\Phi_1$ and $\Phi_2$, we can recover exactly the points $x_k^{m,n}$ on these orbits that are in $\cspace_1$ or $\cspace_2$.
    In particular, we may recover all of the points $x_k^n$ in the cyclicity-$1$ orbits for large enough $n$ and $ 1 \leq k \leq n-1 $.
    
    If $ x_k^n = (i_k^n,s_k^n,r_k^n) $, then we can recover the point of collision $\gamma_1(s_1^n)$, the unit normal $\vec{\nu}$ to scatterer $1$ at this point, and the angle of incidence $ \varphi_1^n = \sin^{-1}(r_1^n) $. If $\vec{u}$ is the rotation of $\vec{\nu}$ by an angle of $\varphi_1^n$, then the ray from the collision point $\gamma_1(s_1^n)$ to the collision point $\gamma_3(s_0^n)$ is in the direction of $\vec{u}$.
    Also, since $ s_{n-1}^n = s_1^n $, the perimeter of this orbit is exactly
    $$ \ell_n = 2L_{31}\parens{s_0^n, s_1^n} + \sum_{k=2}^{n-1} L_{i_{k-1} i_k}\parens{s_{k-1}^n, s_k^n}. $$
    However, we know $\ell_n$ and all of the distances $ L_{i_{k-1} i_k}\parens{s_{k-1}^n, s_k^n} $ for $ 2 \leq k \leq n-1 $, so we can recover the distance $ L_{31}\parens{s_0^n, s_1^n} $.
    We then have $ \gamma_3(s_0^n) = \gamma_1(s_1^n) + L_{31}\parens{s_0^n, s_1^n} \vec{u} $.
    This geometric argument is illustrated in Figure \ref{fig:fixed-pf}.
    
    Using this process, we can find a sequence of points $\gamma_3(s_0^n)$ on $ \partial D_3 $. These points will accumulate at the homoclinic point $\gamma_3(s_0^\infty)$, so there is a unique analytic curve that passes through all of these points and $\gamma_3(s_0^\infty)$.
    Since the boundary $ \partial D_3 $ is closed and analytic, it must be the unique analytic continuation of this curve.
    This determines the region $D_3$.
\end{proof}

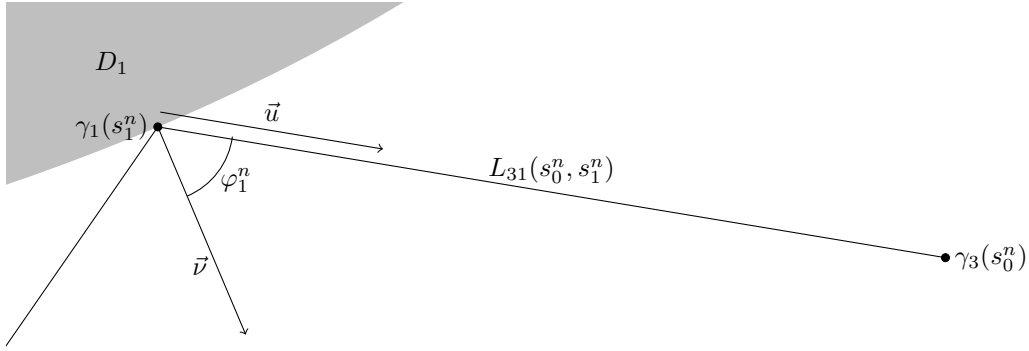
\begin{figure}[ht]
    \centering
    \begin{tikzpicture}[x=2cm, y=2cm]
        \clip (6.3,2.2) rectangle (13.3,4.7);
        \fill[black!25] (0,9.4) ellipse (12.2 and 6.9);
        \fill[black!25] (0,-13.9) ellipse (13.3 and 11.4);
        \node at (7,4.3) {$D_1$};
        
        \draw[fill] (12.5417, 3.0041) circle[radius=1.5pt];
		\node[right](A0) at (12.5417, 3.0041) {$\gamma_3(s_0^n)$};
		\draw[fill] (7.3062, 3.8742) circle[radius=1.5pt];
		\node[left](A1) at (7.3062, 3.8742) {$\gamma_1(s_1^n)$};
		\node(A2) at (2.7233, -2.7415) {};
		\node[left](B1) at (7.6958, 2.9532) {$\vec{\nu}$};
		\node[right](phi) at (7.6598, 3.5206) {$\varphi_1^n$};
		
		\path[-] (12.5417, 3.0041) edge node[above]{$L_{31}(s_0^n,s_1^n)$} (7.3062, 3.8742);
		\draw (7.3062, 3.8742) -- (2.7233, -2.7415);
		\path[->] (7.3062, 3.8742) edge (7.8905, 2.4927);
		\draw (7.5010, 3.4137) arc[start angle = 292.93, end angle = 352.56, radius = .5];
		\path[->] (7.3226, 3.9728) edge node[above]{$\vec{u}$} (8.8023, 3.7269);
    \end{tikzpicture}
    \caption{The process of determining the collision point $\gamma_3(s_0^n)$ from the point $\gamma_1(s_1^n)$, the unit normal $\vec{\nu}$ to $D_1$, and the angle of incidence $\varphi_1^n$.}
    \label{fig:fixed-pf}
\end{figure}

\section{Conclusion and Possible Generalizations} \label{sec:conclusion}

Theorem \ref{th:conj} reduces the problem of marked length spectrum rigidity of $3$-scatterer systems considered in this paper to a problem of analytic conjugacy.
In order to prove length spectrum rigidity of this class of systems, it suffices to prove the following:

\begin{conjecture} \label{conj:rigid}
    Any two $3$-scatterer systems that are analytically conjugate to one another in a neighborhood of the homoclinic orbit must be isometric, or equivalently, given any normal form $N$ and gluing map $G$, there is at most one triple of functions $(\Phi_1,\Phi_2,\Phi_-)$ that can be realized from the collision map of a $3$-scatterer system.
\end{conjecture}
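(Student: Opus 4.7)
The strategy is to upgrade a local analytic conjugacy near the homoclinic orbit to a genuine global isometry of billiard tables, leveraging the analyticity of all three scatterer boundaries. I would split the argument into three stages: first normalize away the inessential freedom in the conjugacies, then reconstruct $D_1 \cup D_2$ from $(N,G)$ together with the normalized triple $(\Phi_1,\Phi_2,\Phi_-)$, and finally reconstruct $D_3$ by imitating the proof of Theorem \ref{th:fixed}.

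For the normalization step, I would combine the uniqueness clause of Theorem \ref{th:BNF} with the time-reversing symmetry constraints (d) and (e) of Proposition \ref{prop:BNF}. Moser's uniqueness says any two conjugacies $\Phi_1$ and $\tilde\Phi_1$ differ by a Birkhoff-type automorphism $\Psi(\xi,\eta) = (\tau(\xi\eta)\xi, \eta/\tau(\xi\eta))$. Imposing $\Phi_1 \circ I = I_0 \circ \Phi_1$ forces $\tau \equiv \pm 1$, so only a discrete ambiguity remains, and this can be killed by fixing the sign convention $\xi_\infty > 0$ already adopted in the excerpt. Consequently, for two systems with the same $(N,G)$, one may assume the conjugacies $(\Phi_1,\Phi_2,\Phi_-)$ and $(\Phi_1',\Phi_2',\Phi_-')$ are in canonical form and share a common domain in Birkhoff coordinates.

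The middle step is to show that the canonical triple $(\Phi_1,\Phi_2,\Phi_-)$ determines the planar curves $\gamma_1,\gamma_2$ up to a rigid motion. Composing $\Phi_1$ with the projection $(1,s,r)\mapsto \gamma_1(s)$ yields an analytic parametrization of an arc of $\partial D_1$; the $r$-coordinate simultaneously records the incidence angles, and hence the outward normal field along that arc. The infinite jet of $\Phi_1$ at the origin therefore determines the infinite jet of $\gamma_1$ at the perpendicular bounce $\gamma_1(0)$, and analyticity of $\partial D_1$ promotes this germ to the full closed curve. The same applies to $\gamma_2$. The relative placement of $D_1$ and $D_2$ in the plane is fixed by $\ell_0$, which by Proposition \ref{prop:l-order-1} is recoverable from the length spectrum and hence from $(N,G)$. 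For the third scatterer, once $D_1 \cup D_2$ has been produced, the length spectral data $\ell_n$ can be computed from $(N,G)$ via the forward direction of Theorem \ref{th:conj}, and the geometric reconstruction of $\gamma_3(s_0^n)$ in Figure \ref{fig:fixed-pf} recovers a sequence of points on $\partial D_3$ accumulating at $\gamma_3(s_0^\infty)$; analytic continuation finishes the recovery of $\partial D_3$.

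The main obstacle is the middle step: reading off the germs of $\gamma_1,\gamma_2$ from the analytic map $\Phi_1$ alone. The very remark the authors make at the end of Section \ref{sec:conclusion} warns that $N$ alone cannot see the geometry near the perpendicular bounces, so some portion of the jet information of $\gamma_1,\gamma_2$ must in fact be carried by the gluing map $G$ rather than by $\Phi_1$ in isolation. Consequently, the argument above is circular as stated: treating $\Phi_1$ as ``known'' implicitly assumes more than $(N,G)$ provides, since in the original billiard setting $\Phi_1$ is only defined up to the discrete normalization above \emph{once a planar embedding of $\gamma_1$ has been chosen}. The genuine content of Conjecture \ref{conj:rigid} is therefore a statement that the combined data $(N,G)$ has enough independent coefficients to encode all jets of both $\gamma_1$ and $\gamma_2$ at their perpendicular bounces. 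I expect the crux will be a delicate coefficient-matching argument, in the spirit of the jet analysis of Section \ref{sec:recover} but in the opposite direction, showing that the coefficients of $G$ in Birkhoff coordinates contain precisely the missing curvature and higher-order invariants; any soft argument appears unlikely to succeed.
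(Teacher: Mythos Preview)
The statement you are attempting to prove is not a theorem in the paper; it is explicitly a \emph{conjecture}, stated in the concluding section as an open problem. The authors write that ``new methods are needed to prove Conjecture~\ref{conj:rigid}'' and give a heuristic degree-of-freedom count suggesting why it should be true generically, but they do not claim a proof. So there is no ``paper's own proof'' to compare against.

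As for your proposal itself, you correctly diagnose in your final paragraph that the middle step is circular, and that is indeed the heart of the difficulty. But your first step is also flawed, and for a more basic reason than you indicate. Moser's uniqueness clause in Theorem~\ref{th:BNF} compares two conjugacies $\Phi_1$, $\tilde\Phi_1$ that both conjugate $N^2$ to the \emph{same} map $F^2$. In the situation of the conjecture you have two \emph{different} billiard systems with collision maps $F$ and $F'$; the conjugacy $\Phi_1$ lands in the phase space of the first system and $\Phi_1'$ in the phase space of the second, and these are not even the same target (the perimeters $\pi_i$ may differ). There is no composition $\Phi_1' = \Phi_1\circ\Psi$ to speak of, so the reduction to $\tau\equiv\pm 1$ never gets off the ground. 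What survives is only the internal normalization of each system separately, which you already have from Proposition~\ref{prop:BNF} and the sign convention $\xi_\infty>0$; that buys you nothing toward comparing the two systems.

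Your instinct in the last paragraph is the right one: the actual content of the conjecture is that the coefficients of $G$ carry enough independent information to pin down the jets of $\gamma_1$ and $\gamma_2$ at the perpendicular bounces, and this is precisely what the paper does not know how to do. The third stage of your outline (recovering $D_3$ once $D_1\cup D_2$ is known) is essentially a restatement of Theorem~\ref{th:fixed} and would indeed go through, but it presupposes the unresolved middle step.
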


In general, the order $2n+1$ approximation of the normal form $N$ near the origin has $n$ degrees of freedom. However, the order $2n+1$ approximations of $\cmap_{12}$ and $\cmap_{21}$ near the respective $2$-periodic points depend on the order $2n+2$ approximations of both $ \partial D_1 $ and $ \partial D_2 $ near these collision points. In this sense, the normal form provides less information than the shapes of the scatterers $D_1$ and $D_2$, so $N$ cannot be recovered from the region $ D_1 \cup D_2 $, even if we assume one axis of reflexive symmetry.
On the other hand, there are $\Oo(n^2)$ degrees of freedom for the order $n$ approximation of an area-preserving map $G$ for which $ G \circ I $ is an involution, so the requirement that $G$ remain the same in two systems seems quite restrictive, and it is reasonable to expect that generically a non-isometric system with the same $N$ and $G$ does not exist.
However, determining the gluing map $G$ involves working with the behavior of the system near all points in the homoclinic orbit simultaneously. The approach in this paper of recovering local power series approximations one order at a time is not well-suited for relating the local behavior at one point to the local behavior at another point, so we expect that new methods are needed to prove Conjecture \ref{conj:rigid}.

In this paper, we considered only orbits shadowing the homoclinic orbit with coding $ (21)^\infty3(12)^\infty $. However, we expect the results in this paper to be generalizable to other types of periodic orbits as well.
For example, we may also consider the heteroclinic point on scatterer $3$ with coding $ (12)^\infty3(12)^\infty $ and the cyclicity-$2$ orbits with coding $ 3(12)^m3(12)^n $ or $ 3(12)^{m-1}13(21)^{n-1}2 $.
More generally, given any coding sequence $ \sigma = (i_0,i_1,\hdots,i_{n-1}) \in \Sigma $ of a periodic orbit, we may consider periodic orbits shadowing homoclinic orbits with codings such as $ \bar{\sigma}^\infty i \sigma^\infty $ or $ \sigma^\infty i \sigma^\infty $, where $ \bar{\sigma} = (i_{n-1},\hdots,i_1,i_0) $.
We expect the set of all periodic orbits to be dense in the non-escaping Cantor set. This would lead to the following generalization of Theorem \ref{th:conj}:

\begin{conjecture} \label{conj:general}
    Two $3$-scatterer dispersing billiard systems have the same marked length spectrum if and only if there is an analytic conjugacy between their collision maps defined in some neighborhood of their non-escaping Cantor sets.
\end{conjecture}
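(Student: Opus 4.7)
The plan is to generalize the three-step strategy behind Theorem \ref{th:conj} so that it applies to every periodic coding $\sigma \in \Sigma$ simultaneously. The ``only if'' direction is the substantive one; the ``if'' direction will then follow from the fact that, after generalizing, each local length datum is computable from the corresponding local normal form and gluing map exactly as in Proposition \ref{prop:length-action}.

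For the ``only if'' direction, the first step is, for each $\sigma = (i_0,\ldots,i_{n-1}) \in \Sigma$, to apply Theorem \ref{th:BNF} at the hyperbolic fixed point of $F^n$ at the periodic point $p_\sigma$. This gives an analytic conjugacy $\Phi_\sigma$ to a unique Birkhoff normal form $N_\sigma$ in a neighborhood of $p_\sigma$, and an argument as in Lemma \ref{lem:extension} extends the conjugacy to a neighborhood of the entire periodic orbit by pushing forward along $F$. The second step is, for each admissible homoclinic coding of the form $\bar\sigma^\infty i \sigma^\infty$ or $\sigma^\infty i \sigma^\infty$, to define a gluing map between the appropriate stable and unstable manifolds of the relevant periodic orbits in Birkhoff coordinates, exactly as in Definition \ref{def:G}. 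The third step is, paralleling Sections \ref{sec:series}--\ref{sec:recover}, to study the cyclicity-$2$ periodic orbits shadowing each such homoclinic or heteroclinic orbit, derive a triangular asymptotic power series expansion for their perimeters, and recover from its coefficients both the normal forms of $F^n$ near $p_\sigma$ and $p_{\bar\sigma}$ and the associated gluing map.

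Having done this for every $\sigma$ and every admissible intermediate letter $i$, one has analytic conjugacies to normal-form dynamics in a neighborhood of every periodic orbit and every admissible homoclinic and heteroclinic orbit. The final step is to patch these local conjugacies into a single analytic conjugacy on some open neighborhood of the non-escaping Cantor set $\mathcal{C}$, using the density of periodic orbits in $\mathcal{C}$ (which follows from the symbolic dynamics identification of $\mathcal{C}$ with a subshift of finite type) together with the uniqueness clause of Theorem \ref{th:BNF}.

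The main obstacle is this patching step. Each application of Theorem \ref{th:BNF} determines $\Phi_\sigma$ only up to the one-parameter family (\ref{Psi-BNF}), and compatibility of the resulting charts across overlapping neighborhoods of different periodic orbits is not automatic: a choice of normalization for $\Phi_\sigma$ in one chart forces, via the shared gluing maps along homoclinic orbits, a specific normalization of $\Phi_{\sigma'}$ in any overlapping chart. One natural approach is to fix the normalization once and for all near the $2$-periodic orbit as in Proposition \ref{prop:BNF}, and then propagate the normalization outward along chains of overlapping homoclinic and heteroclinic connections, relying on the uniqueness clauses of Theorems \ref{th:BNF} and \ref{th:conj} to force each propagation step. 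A secondary difficulty is that the sizes of the local normal-form neighborhoods depend on the hyperbolicity constants at each $p_\sigma$, which can deteriorate as $|\sigma|$ grows; so even granting consistent normalizations, extending the union of local conjugacies to a single open neighborhood of $\mathcal{C}$ requires uniform control on these neighborhoods, and this control does not come for free from the construction here.
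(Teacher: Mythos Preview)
This statement is labeled \emph{Conjecture} in the paper, and the paper gives no proof of it. The surrounding discussion in Section~\ref{sec:conclusion} only sketches the heuristic that the methods behind Theorem~\ref{th:conj} should generalize to periodic orbits shadowing arbitrary homoclinic and heteroclinic connections, and notes that density of periodic orbits in $\mathcal{C}$ would then yield the stated equivalence. There is therefore no proof in the paper to compare your proposal against.

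Your outline is exactly this heuristic made more explicit, and it is a reasonable plan of attack. But it is not a proof, and you yourself flag the two places where it is incomplete. The patching issue is real: the uniqueness clause in Theorem~\ref{th:BNF} gives you each $\Phi_\sigma$ only up to a map of the form~(\ref{Psi-BNF}), and nothing in the paper's arguments forces these choices to be globally coherent across overlapping charts attached to distinct periodic orbits. Propagating a normalization along homoclinic chains is the natural idea, but you would need to show that closed loops in the web of homoclinic connections impose no holonomy obstruction, and that argument is absent. The second issue, uniform control on the radii of the normal-form neighborhoods as $|\sigma|\to\infty$, is likewise genuine and is not addressed anywhere in the paper. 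Until both gaps are closed, what you have is a strategy, not a proof, which is consistent with the paper's decision to state this as a conjecture.
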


If Conjecture \ref{conj:general} is true, then to prove marked length spectrum rigidity, it suffices to prove that two $3$-scatterer systems that are analytically conjugate to each other in some neighborhood of their non-escaping Cantor sets must have isometric billiard tables.

Another possible generalization of our results would be to allow for isometric transformations of the scatterers individually and assume knowledge of the length spectrum as a function of the position of the scatterers.
It is conceivable that one could gather useful information from this data on how the behavior of the system changes near the $2$-periodic points and/or the homoclinic point on scatterer $3$ and that this information could be enough to recover local approximations of the boundaries of the respective scatterers at these points.
We pose the following conjecture:

\begin{conjecture} \label{conj:transformation}
    Let $SE(2)$ be the special Euclidean group, or the Lie group of direct isometries of $\R^2$, and $S$ be a submanifold of $ SE(2)^3 / \Delta $ where $ \Delta := \set{(g,g,g): g \in SE(2)} $. Let $S$ act on the class of $3$-scatterer dispersing billiard systems by $ (g_1,g_2,g_3)(D_1,D_2,D_3) = \parens{g_1(D_1), g_2(D_2), g_3(D_3)} $ and $\ell_{m,n}(g)$ denote the length spectral data $\ell_{m,n}$ of the system $g(D_1,D_2,D_3)$.
    Then, the collection of length spectral data $ \set{\ell_{m,n}(g)}_{m \in \N, n \in \N, g \in S} $ uniquely determine the scatterers $D_1$, $D_2$, and $D_3$.
\end{conjecture}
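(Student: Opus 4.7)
The plan is to apply Theorem \ref{th:conj} pointwise over $S$ to extract a family of Birkhoff invariants, and then to exploit the dependence of these invariants on $g$ in order to reconstruct the scatterers. For each fixed $g \in S$, Theorem \ref{th:conj} recovers from $\set{\ell_{m,n}(g)}_{m,n\in\N}$ the Birkhoff normal form $N_g$ and gluing map $G_g$ of the system $g(D_1,D_2,D_3)$; as a byproduct, this also yields the $2$-periodic length $\ell_0(g)$, the multiplier $\lambda(g)$, the coordinate $\xi_\infty(g)$, and every coefficient in the power series for $\mu_g$ and $M_g$. The key structural observation is that $(N_g, \ell_0(g))$ is determined by $g_1 D_1$ and $g_2 D_2$ alone (equivalently, by the relative isometry $g_1^{-1} g_2$ applied to the unordered pair $(D_1, D_2)$), whereas $G_g$ depends additionally on $g_3 D_3$.

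The first main step is to reconstruct $D_1 \cup D_2$ from the $g$-dependence of $(N_g, \ell_0(g))$ as $g_1, g_2$ vary over the projection of $S$ to the first two $SE(2)$-factors. As the relative position of $D_2$ with respect to $D_1$ changes, the unique shortest chord between them sweeps its endpoints $p_i(g) \in \partial D_i$ along curves on the boundaries. Under a suitable richness hypothesis on $S$ these sweep out open arcs of $\partial D_1$ and $\partial D_2$, and at each swept-out point the local Birkhoff data $(\lambda(g), \mu_g)$ together with the first- and higher-order variations $\partial_t \lambda(g(t))$, $\partial_t \ell_0(g(t))$, $\partial_t \mu_{g(t)}, \ldots$ along paths $g(t) \in S$ should determine the full Taylor expansions of $\partial D_1$ and $\partial D_2$ at those points. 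By analyticity of the boundaries, this suffices to recover $D_1$ and $D_2$ globally, up to a common isometry.

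The second main step then fixes any $g_0 \in S$ and invokes Theorem \ref{th:fixed} applied to the system $g_0(D_1,D_2,D_3)$ with $D_1 \cup D_2$ now known, recovering $g_0(D_3)$ and hence $D_3$.

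The principal obstacle is the reconstruction in the first main step. As the discussion surrounding Conjecture \ref{conj:rigid} emphasizes, the normal form $N_g$ at a single $g$ carries strictly less information than the local boundary geometry; the missing data must be extracted from derivatives of $N_g$ and $\ell_0(g)$ with respect to $g$. Establishing explicit variational formulas of the form $\partial_t N_{g(t)} = \mathcal{V}(\text{perturbation of }\partial D_i)$ and $\partial_t \ell_0(g(t)) = \mathcal{V}'(\text{perturbation of }\partial D_i)$, and proving that the resulting infinite family of constraints uniquely pins down the analytic boundaries, appears to require genuinely new techniques beyond the asymptotic power series machinery of Sections \ref{sec:series}--\ref{sec:recover}. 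A related subtlety, which will likely force a refinement of the conjecture's hypotheses, is identifying the precise transversality condition on $S$ --- roughly, the projection of $S$ to the first two $SE(2)$-factors should be transverse to the fibers of the collision-endpoint map $g \mapsto (p_1(g), p_2(g))$, so that the sweep of collision points is open in $\partial D_1 \times \partial D_2$.
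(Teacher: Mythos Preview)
The statement you are attempting to prove is labeled \emph{Conjecture} in the paper, not \emph{Theorem}: the paper does not supply a proof, so there is nothing to compare your argument against. The author presents Conjecture~\ref{conj:transformation} as an open problem and explicitly remarks that ``When $S$ contains only the identity, this conjecture is identical to Conjecture~\ref{conj:rigid},'' which is itself open.

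That said, as a strategy your outline is reasonable for the regime where $S$ is large enough to move the scatterers relative to one another, and you are candid that the first main step is where the difficulty lies. Two concrete points deserve emphasis. First, your approach requires $S$ to be rich in a way the conjecture does not assume: as stated, $S$ may be zero-dimensional (a single point), in which case there is no $g$-variation to exploit and your scheme collapses to Conjecture~\ref{conj:rigid}. So at best your plan attacks a strict subcase, and the transversality hypothesis you flag at the end is not a refinement but an additional assumption. Second, even granting a rich $S$, the step ``the variations $\partial_t \lambda(g(t))$, $\partial_t \ell_0(g(t))$, $\partial_t \mu_{g(t)}$ determine the full Taylor expansions of $\partial D_1$ and $\partial D_2$'' is the entire content of the problem in that regime, and you have not supplied a mechanism for it. The paper's own discussion preceding Conjecture~\ref{conj:rigid} notes that the order-$(2n{+}1)$ normal form carries $n$ degrees of freedom while the pair of boundary jets carries more; you would need to show that differentiating in enough independent directions in $S$ makes up the deficit, and that the resulting system of equations for the boundary jets is invertible. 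None of the tools developed in Sections~\ref{sec:series}--\ref{sec:recover} address this, as you correctly observe.

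In short: there is no gap relative to the paper because the paper proves nothing here; your proposal is an honest sketch of one possible attack on a restricted version of the conjecture, with the decisive step left open.
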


Modding out by $\Delta$ in this conjecture is due to the fact that applying an isometry to the entire system will not change the length spectrum.
Different submanifolds $S$ provide different variations of this conjecture. For example, we can restrict to only translations and/or allow transformations of only one scatterer.
When $S$ contains only the identity, this conjecture is identical to Conjecture \ref{conj:rigid}.

\section{Acknowledgements}

This paper profited considerably from almost two years of regular discussion on the subject with Professor Vadim Kaloshin.

This material is based upon work supported by the National Science Foundation Graduate Research Fellowship Program under Grant No. DGE 1840340. Any opinions, findings, and conclusions or recommendations expressed in this material are those of the author and do not necessarily reflect the views of the National Science Foundation.

\end{document}